\theoremstyle{plain}
\newtheorem{theorem}{Theorem}[section]
\newtheorem{lemma}[theorem]{Lemma}
\newtheorem{corollary}[theorem]{Corollary}
\newtheorem{assumption}[theorem]{Assumption}
\theoremstyle{definition}
\newtheorem{definition}[theorem]{Definition}
\theoremstyle{remark}
\newcommand{\R}{\mathbb{R}}
\newcommand{\N}{\mathbb{N}}
\renewcommand{\dd}{\mathop{}\!\mathrm{d}}
\newcommand{\specHs}{\mathbb{H}^s(\Omega)}
\newcommand{\io}{\int_{\Omega}}
\newcommand{\reference}{\mathrm{ref}}
\begin{document}
\captionsetup[table]{position=below}
\title{Spatially sparse optimization problems in fractional order Sobolev spaces}

\author{
\name{
Anna Lentz\textsuperscript{a}\thanks{Email: anna.lentz@uni-wuerzburg.de},
Daniel Wachsmuth\textsuperscript{a}\thanks{Email: daniel.wachsmuth@uni-wuerzburg.de\\This research was partially supported bythe German Research Foundation DFG under project grant Wa 3626/5-1.
}
}
\affil{\textsuperscript{a}Institut für Mathematik, Universität Würzburg, 97074 Würzburg, Germany}
}

\maketitle

\begin{abstract}
We investigate time-dependent optimization problems in fractional Sobolev spaces with the sparsity promoting $L^p$-pseudo-norm for $0<p<1$ in the objective functional. In order to avoid computing the fractional Laplacian on the time-space cylinder $I\times \Omega$, we introduce an auxiliary function $w\in H^s(\Omega)$ that is an upper bound for the function $u\in L^2(I\times\Omega)$. We prove existence and regularity results and derive a necessary optimality condition. This is done by smoothing the $L^p$-pseudo-norm and by penalizing the inequality constraint regarding $u$ and $w$. The problem is solved numerically with an iterative scheme whose weak limit points satisfy a weaker form of the necessary optimality condition. \end{abstract}
\begin{keywords}
Fractional Sobolev spaces, sparse optimization, necessary optimality condition, $L^p$-functionals, penalization\end{keywords}

\begin{amscode}
49K30, 
49M20  
\end{amscode}

\section{Introduction}

The aim of this paper is to investigate time-dependent optimization or optimal control problems
that admit spatially sparse solutions.
Let us  describe informally the notion of spatial sparsity.
Let $\Omega \subset \R^d$ be a bounded domain and $T>0$ a terminal time, and set $I:=(0,T)$.
Given a function $u:I \times \Omega \to \R$,
we say that $u$ is spatially sparse if the measure of the set $\{x\in \Omega: \ u(\cdot, x) \ne 0\}$ is small.

In this paper, we will analyse the following optimization problem:
\begin{equation} \label{eq:minprob}
    \underset{u\in L^2(I\times\Omega), w\in H^s(\Omega)}{\min} f(u) + \frac{\alpha}{2}\|u\|_{L^2(I\times\Omega)}^2+\frac{\beta}{2}\|w\|_{H^s(\Omega)}^2+\gamma \|w\|_p^p
\end{equation}
subject to the constraints
\begin{equation} \label{eq:conduw}
    |u(t,x)| \leq w(x) \quad  \textrm{f.a.a.}\,\, (t,x) \in I\times\Omega.
\end{equation}
In this problem, $u$ is the unknown control that should be spatially sparse, while $w$ is an auxiliary variable.
Let us first explain the various parts of the optimization problem in more detail:
\begin{itemize}
 \item $f: L^2(I\times\Omega) \to \R$ is the objective functional to be minimized,
 \item the term $\frac{\alpha}{2}\|u\|_{L^2(I\times\Omega)}^2$ with $\alpha>0$
 models the control cost of $u$,
 \item the $L^p$-pseudo-norm $\gamma \|w\|_p^p:= \gamma \int_\Omega |w(x)|^p \dd x$ with $p\in (0,1)$ and $\gamma>0$ promotes sparsity of $w$,
 \item the term $\frac{\beta}{2}\|w\|_{H^s(\Omega)}^2$ with $s\in (0,1)$ and $\beta>0$ is needed to prove existence of solutions,
 \item the constraint \eqref{eq:conduw} couples $u$ and $w$: if $w$ is sparse, i.e. the measure of $\{x\in \Omega: \ w(x)\ne0\}$ is small, then $u$ is spatially sparse.
\end{itemize}
The main results of this paper regarding \eqref{eq:minprob}--\eqref{eq:conduw} are: existence of solutions, first-order necessary optimality conditions,
and a solution algorithm. The optimality conditions and the algorithm are both based on the same smoothing procedure.

To motivate the form of the functional in \eqref{eq:minprob} let us first consider the problem with $\beta=0$. Then we can eliminate $w$ by setting
$w(x):= \|u(\cdot,x)\|_{L^\infty(I)}$, and we obtain the problem
\begin{equation} \label{eq:minprob_beta0}
    \underset{u\in L^2(I\times\Omega)}{\min} f(u) + \frac{\alpha}{2}\|u\|_{L^2(I\times\Omega)}^2 + \gamma \int_\Omega \|u(\cdot,x)\|_{L^\infty(I)}^p \dd x.
\end{equation}
For suitable functions $u$ one can prove that for $p \searrow0$ we have the convergence
\begin{equation} \label{eq_p_to_0}
 \int_\Omega \|u(\cdot,x)\|_{L^\infty(I)}^p \dd x \to \int_\Omega \|u(\cdot,x)\|_{L^\infty(I)}^0 \dd x = \left| \{x\in \Omega: \ u(\cdot, x) \ne 0\} \right|
\end{equation}
with the convention $0^0:=0$,
where $|\cdot|$ denotes the Lebesgue measure.
Note that in \eqref{eq_p_to_0} we can replace the $L^\infty(I)$-norm by any $L^q(I)$-norm with $q\in [1,+\infty)$.
Hence, we expect solutions of \eqref{eq:minprob_beta0} or \eqref{eq:minprob}--\eqref{eq:conduw} to be spatially sparse for $0 <p \ll 1$.
The choice $p<1$ leads to a functional that is not weakly lower semicontinuous in $L^q$-spaces, so that standard existence proofs do not work.

In order to guarantee existence of solutions, one needs to set the problem \eqref{eq:minprob_beta0} in a function space that compactly embeds into $L^2(I\times\Omega)$.
Here, we will use fractional Sobolev spaces $H^s$ with $0<s \ll 1$.
Then a regularized version of  \eqref{eq:minprob_beta0} reads
\begin{equation} \label{eq_minprob_u_in_Hs}
    \underset{u\in L^2(I\times\Omega)}{\min} f(u) + \frac{\alpha}{2}\|u\|_{L^2(I\times\Omega)}^2 + \gamma \int_\Omega \|u(\cdot,x)\|_{L^2(I)}^p \dd x + \beta \|u\|_{H^s(I\times\Omega)}^2.
\end{equation}
Due to compact embeddings of $H^s(I\times \Omega)$ into $L^2(I\times \Omega)$, one can prove existence of solutions of this problem.
To solve the resulting problem numerically, we want to use the majorization-minimization approach from \cite{lp_cont,sparse}.
When applied to \eqref{eq_minprob_u_in_Hs}, one would have to solve equations of the type
$(-\Delta)^s u + au = f$ in each iteration, where $(-\Delta)^s$ is the fractional Laplace on the time-space domain $I\times \Omega$ and $a\in L^\infty( I \times \Omega)$
is  a non-constant function that changes during the iterations. Surprisingly, no efficient solution methods are known for these type of equations.
Spectral or Fourier based approaches are only viable if $a\in \R$, since in this case the standard Laplacian $-\Delta$ and the operator $(-\Delta)^s  + a$ have the same system of eigenfunctions.
A direct finite-element discretization with mesh-size $h$ would lead to a linear system with $h^{-(d+1)}$ unknowns and full system matrix,
whose solution would be prohibitively expensive already for $d=2$.
The Caffarelli-Silvestre extension replaces the fractional Laplace equation by a degenerate elliptic problem in $I\times \Omega \times (0,+\infty) \subset \R^{d+2}$, see \cite{StingaTorrea2010},
which again is challenging to solve for $d\ge2$.

To overcome this difficulty, we introduce the auxiliary variable $w \in H^s(\Omega)$. In the resulting optimization problem the sparsity promoting functional is applied to $w$,
and the condition $|u(t,x)| \le w(x)$ enforces spatial sparsity of $u$. Still we need to solve equations involving the fractional Laplacian on a subset of $\R^d$ instead
on a subset of $\R^{d+1}$.
For $s<1/2$ the space $H^s(\Omega)$ contains characteristic functions, so we later work with this choice of $s$.
These considerations lead us to the optimization \eqref{eq:minprob}--\eqref{eq:conduw} introduced at the beginning.

Let us provide a review of relevant literature.
Actuator placement problems are studied for instance in \cite{Munch2008,KaliseKunischSturm2018},
where the $L^0$-pseudo-norm appears in the functional, and topological gradient methods are used to solve the problem numerically.
Problems with $p\in[0,1)$ are studied for instance in  \cite{Wachsmuth2019,sparse,lp_cont}.
A simple example without solution is presented in \cite{Wachsmuth2019}.
The regularization in $H^1$ was proposed in \cite{lp_cont}, while fractional Sobolev spaces are used in \cite{sparse}.
For $p\in (0,1)$, majorization-minimization methods are available that are based on approximations of the concave function $s \mapsto s^{p/2}$.
This approach was used in \cite{lp_cont,sparse}, and we will use these ideas as well.
The present paper is a direct extension of \cite{sparse} to the time-dependent case. Due to the presence of the inequality constraints \eqref{eq:conduw}
the analysis is considerably more technical.

The choice $p=1$ is very popular in the literature, since then tools from convex analysis can be applied.
Optimal control problems that penalize the $L^1$-norm of the controls in the cost functionals were analysed, e.g., in \cite{180NavierStokesOCExample, 181SchloeglExample,85l1Sparse}.
Problems with spatially sparse controls were considered in \cite{81spatTempSparse,80dirSparse}.
As explained above, we concentrate on  the case $p<1$ in this paper to stay close to the $L^0$-pseudo norm.
For the numerical experiments, we used standard finite elements. More advanced concepts are available to solve the equations of the type $(-\Delta)^s u = f$ numerically.
We refer only to the review article \cite{Hofreither2020}. However, these methods do not generalize easily to equations of the type $(-\Delta)^s u +au= f$ with non-constant $a$.

The structure of the paper is as follows. First, we give a brief overview of fractional Sobolev spaces in Section \ref{sec:fracSob}. Then we prove existence of solutions of the minimization problem in Section \ref{sec:exSol} and derive an auxiliary problem with a smoothing and penalization scheme in the subsequent Section \ref{sec:auxProb}. This auxiliary problem is used in Section \ref{sec:oc} to obtain an optimality condition for the minimization problem \eqref{eq:minprob}. In Section \ref{sec:algo} we consider an iterative scheme to solve this minimization problem. We conclude by giving some numerical examples in Section \ref{sec:numRes}.

\subsection*{Notation}
Throughout this paper, $\Omega\subset \R^d$ denotes a bounded Lipschitz domain and $I=(0,T)$ some time interval with $T>0$. Furthermore, we use the abbreviation
\[
U:= L^2(I\times\Omega).
\]
The space $W$ will be defined below as a suitable fractional Sobolev space on $\Omega$. We denote by
\begin{equation}\label{eq_def_K}
K:= \{ (u,w) \in U\times W:\ |u(t,x)| \leq w(x) \textrm{ for a.a. } (t,x) \in I\times\Omega \}
\end{equation}
the set of admissible functions. Moreover, we use the notation $(\cdot,\cdot)_V$ for the inner product of a Hilbert space $V$ and $\braket{\cdot,\cdot}_V$ for its duality product. We use the notation
\[
u_- := \min(u,0) \quad \text{ and }\quad u_+ :=\max(u,0)
\]
to describe the negative or positive part of a function $u$.

When we use a function $w$ defined on the spatial domain $\Omega$ in an expression involving the time-space cylinder $I\times\Omega$,
we employ the constant extension of $w$ from $\Omega$ to $I\times\Omega$ with $w(t,x):= w(x)$ for (almost all) $(t,x) \in I\times\Omega$.

\section{Fractional Sobolev spaces}\label{sec:fracSob}
In this section, we state some definitions and results about fractional order Sobolev spaces.
There are several possibilities to define a fractional Sobolev space or a fractional Laplace operator on a bounded domain.
For most cases, they lead to to the same spaces with equivalent norms, see \eqref{eq:equiv_spaces} below.
As the solutions of optimization problems as \eqref{eq:minprob} depend on the concrete choice of the norm,
we will discuss several of them.
Important for our analysis are two facts: the compact embedding into $L^2(\Omega)$ and a certain monotonicity of the inner product of the fractional Sobolev space with respect to
truncated functions, see Lemma \ref{tm:propFrac} and \ref{lm:inequPosPart} below.
These are satisfied for the standard choices of fractional Sobolev spaces.

\begin{definition} Let $s\in (0,1)$.
Then the fractional order Sobolev space $H^s(\Omega)$
is defined as
\[
H^{s}(\Omega) :=  \left\{ w \in L^2(\Omega): \ (x,y)\mapsto\frac{|w(x)-w(y)|}{|x-y|^{\frac{d}{2}+s}} \in L^2(\Omega \times \Omega) \right\}
\]
 with the corresponding norm \begin{equation*}
 \|w\|_{H^{s}(\Omega)} := \left(\int_{\Omega}|w|^2\dd x+\frac{c_{d,s}}{2}\int_{\Omega}\int_{\Omega} \frac{|w(x)-w(y)|^2}{|x-y|^{d+2s}}\dd y \dd x \right)^{\frac12}
  \end{equation*}
and inner product
\[
	(u,w)_{H^s(\Omega)}  = \io uw \dd x + \frac{c_{d,s}}{2} \io \io \frac{(u(x)-u(y)) (w(x)-w(y)))}{|x-y|^{d+2s}} \dd y \dd x,
\] 
where $c_{d,s}:= \frac{s 2^{2s}\Gamma(s+\frac{d}{2})}{\pi^{\frac{d}{2}}\Gamma(1-s)}.$
 \end{definition}

With this norm, we define the space $H_0^s(\Omega)$ as $$ H_0^s(\Omega) := \overline{C_c^{\infty}(\Omega)}^{H^s(\Omega)}, $$ where $C_c^{\infty}(\Omega)$ is the space of infinitely continuously differentiable functions with compact support in $\Omega$.
Moreover, let $$ \tilde{H}^s(\Omega) := \left\{ w \in H^s(\R^d):  w|_{\R^d\setminus  \Omega} = 0 \right\}=\overline{C_c^{\infty}(\Omega)}^{H^s(\R^d)},$$
where the latter identity was shown in \cite{7densityFrac}.
Hence, functions in $\tilde{H}^s(\Omega)$ can be extended by zero to $\R^d \setminus \Omega$.
The space $\tilde{H}^s(\Omega)$ is supplied with the $H^s(\R^d)$-inner product.
For $s=\frac12$, the Lions-Magenes space is defined as
\[
H_{00}^{\frac12}(\Omega):= \left\{w \in L^2(\Omega): \int_{\Omega} \frac{w^2(x)}{\operatorname{dist}(x,\partial \Omega)}\dd x < \infty \right\}
\]
with the norm
\[
\|w\|_{H_{00}^{\frac12}(\Omega)} := \left(\|w\|^2_{H^{\frac12}(\Omega)}+ \int_{\Omega} \frac{w^2(x)}{\operatorname{dist}(x,\partial \Omega)}\dd x\right)^{\frac12}.
\]
For this space it holds $H_{00}^{\frac12}(\Omega) \subsetneq H^{\frac12}(\Omega)$.

Another way to define fractional Sobolev spaces is via spectral theory using eigenvectors and eigenfunctions of the Laplace operator with zero Dirichlet boundary conditions.
\begin{definition} Let $s\geq 0$.
Then the fractional order Sobolev space in spectral form is defined as  $$ \mathbb{H}^s(\Omega) := \left\{ w= \sum_{n=1}^{\infty} w_n \phi_n \in L^2(\Omega): \|w\|_{\mathbb{H}^s}^2 := \sum_{n=1}^{\infty} \lambda_n^s w_n^2<\infty \right\},$$
where $w_n := \int_{\Omega}w \phi_n \dd x$, and $\phi_n$ are the eigenfunctions of the Laplacian with zero Dirichlet boundary conditions to the eigenvalues $\lambda_n$.
\end{definition}
One can rewrite the norm of $\mathbb{H}^s(\Omega)$ in an integral formulation as done in  \cite[page 1, Appendix A.1]{nonhomBC}, \cite[page 5-6]{anote}, namely \begin{equation} \label{eq:spectralInt}\|w\|^2_{\mathbb{H}^s(\Omega)}= \frac12 \int_{\Omega}\int_{\Omega}|(w(x)-w(y))|^2J(x,y) \dd y \dd x + \int_{\Omega}\kappa(x) |w(x)|^2\dd x\end{equation}
with inner product
\[
	(u,w)_{\specHs} = \frac12 \io \io (u(x)-u(y)) (w(x)-w(y)) J(x,y) \dd y \dd x + \io \kappa(x) uw \dd x,
\]
where $J$ and $\kappa$ are measurable non-negative functions, and $J$ is symmetric with $J(x,y)=J(y,x)$  for a.a. $x,y \in \Omega$.
This formulation of the $\specHs$-inner product as a double integral will be used in the proof of Lemma \ref{lm:inequPosPart}. 

It was shown in \cite[Corollary 1.4.4.5]{105ellipticProb} and \cite[Chapter 1]{ lions} or \cite[page 12]{185fracEllipticEq} that
we have the equalities
\begin{align} \label{eq:equiv_spaces}\mathbb{H}^s(\Omega)  = \begin{cases} H^s(\Omega) = H_0^s(\Omega)= \tilde{H}^s(\Omega) & \textrm{  if  }0<s<\frac12,  \\  H_{00}^{1/2}(\Omega) & \textrm{  if  } s=\frac12, \\ H_0^s(\Omega)=\tilde{H}^s(\Omega)  & \textrm{  if  } \frac12<s<1,  \end{cases} \end{align}
with equivalence of norms.
From now on, the space $W$ refers to a fractional Sobolev space that is covered by the previous result, i.e.,
\begin{equation}\label{eq_def_W}
 W \in \{ \ H^s(\Omega) , \  H_0^s(\Omega), \   \tilde{H}^s(\Omega), \  \mathbb{H}^s(\Omega) \  \}
\end{equation}
for some $s\in (0,1)$.
We will work with the space $W$ in the remainder of this paper. While all the spaces in \eqref{eq_def_W}
share the same analytical properties, the solutions of optimization problems depend on the concrete choice of the space.

The following properties of these fractional Sobolev spaces will be used later on.
\begin{lemma} \label{tm:propFrac}
    \begin{enumerate}[label=(\roman*)]
        \item The embedding $W\hookrightarrow L^2(\Omega)$ is compact.
        \item Let $w\in W$ and let $z(x):=\max(-1,\min(w(x),1))$. Then $z\in W$ and $(w,z)_W\geq \|z\|_W^2$.
        \item $C_0(\Omega) \cap W$ is dense in $W$ and $C_0(\Omega)$.
    \item There is $q>2$ such that $W$ is continuously embedded in $L^q(\Omega)$. More precisely, \begin{align*}
        W \hookrightarrow \begin{cases}
            L^{\frac{2d}{d-2s}}(\Omega) & \textrm{if } d>2s, \\
            L^p(\Omega) \quad  \forall p\in[1,\infty) & \textrm{if } d=2s, \\
            C^{0,s-\frac{d}{2}}(\Bar{\Omega}) & \textrm{if } d<2s.
        \end{cases}
    \end{align*}
    \end{enumerate}
\end{lemma}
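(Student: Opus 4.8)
The plan is to prove the four items essentially independently, relying on the identification \eqref{eq:equiv_spaces}, which lets us pick whichever concrete realization of $W$ is most convenient for each statement, together with standard facts about Gagliardo seminorms.

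For (i), the compact embedding, I would argue as follows. If $0<s<1/2$, then $W$ (in any of its guises) coincides with $\tilde H^s(\Omega)$ up to equivalent norms, so every $w\in W$ extends by zero to an element of $H^s(\R^d)$ supported in the fixed bounded set $\overline\Omega$; compactness of $H^s(\R^d)\hookrightarrow L^2_{\mathrm{loc}}(\R^d)$ on bounded sets (Rellich--Kondrachov for fractional Sobolev spaces, e.g. the version in the Di Nezza--Palatucci--Valdinoci ``Hitchhiker's guide'') then gives the claim. For $s\ge 1/2$, $W$ embeds continuously into $H^s(\Omega)$ for a bounded Lipschitz domain, and the compact embedding $H^s(\Omega)\hookrightarrow\hookrightarrow L^2(\Omega)$ on bounded Lipschitz domains is again classical. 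I would just cite these two facts and note that they cover all four choices in \eqref{eq_def_W}.

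For (iii), density of $C_0(\Omega)\cap W$ in $W$ and in $C_0(\Omega)$: density in $C_0(\Omega)$ follows because $C_c^\infty(\Omega)\subset C_0(\Omega)\cap W$ is dense in $C_0(\Omega)$ for the sup-norm (Stone--Weierstrass / standard mollification). Density in $W$: for $W=H_0^s(\Omega)$ or $\tilde H^s(\Omega)$ this is the definition, since $C_c^\infty(\Omega)\subset C_0(\Omega)\cap W$ is dense there; for $W=\mathbb H^s(\Omega)$ the same holds because finite linear combinations of eigenfunctions $\phi_n$ are dense and (being smooth and vanishing on $\partial\Omega$) lie in $C_0(\Omega)$; for $W=H^s(\Omega)$ with $0<s<1/2$ we use $H^s(\Omega)=\tilde H^s(\Omega)$ from \eqref{eq:equiv_spaces}. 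For item (iv), the continuous embedding into some $L^q$ with $q>2$, I would invoke the fractional Sobolev inequality / fractional Morrey embedding on bounded Lipschitz domains (again DNPV, Theorems 6.7 and 8.2), which give exactly the three cases $d>2s$, $d=2s$, $d<2s$ stated; the exponent $\frac{2d}{d-2s}$ is strictly larger than $2$ whenever $s>0$, so in every case $q>2$ can be chosen, and continuity of the embedding transfers to all spaces in \eqref{eq_def_W} by norm equivalence.

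The only item requiring a genuine computation is (ii), and this is where I expect the main work. Given $w\in W$ and the truncation $z=\max(-1,\min(w,1))$, I first note $z\in W$: truncation does not increase the Gagliardo seminorm because $|z(x)-z(y)|\le|w(x)-w(y)|$ pointwise (the map $t\mapsto\max(-1,\min(t,1))$ is $1$-Lipschitz), and $|z|\le|w|$ pointwise, so $\|z\|\le\|w\|$ in the $L^2$ part as well; the same pointwise inequalities work for the spectral form via the kernel representation \eqref{eq:spectralInt} since $J,\kappa\ge0$. For the inequality $(w,z)_W\ge\|z\|_W^2$, i.e. $(w-z,z)_W\ge0$, I would write $g:=w-z$ and check the two pieces of the inner product separately. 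The zeroth-order (or $\kappa$-weighted) term: on the set where $|w|\le 1$ we have $g=0$, and where $w>1$ we have $g=w-1>0$ and $z=1>0$, and where $w<-1$ we have $g=w+1<0$ and $z=-1<0$, so $gz\ge0$ pointwise, giving $\int\kappa gz\,dx\ge0$ (resp.\ $\int gz\,dx\ge0$). For the double-integral (seminorm) term I need $(g(x)-g(y))(z(x)-z(y))\ge0$ for a.a.\ $x,y$; this is a case analysis over the three regions $\{|w|\le1\}$, $\{w>1\}$, $\{w<-1\}$ for each of $x$ and $y$. For instance if $|w(x)|\le1$ and $w(y)>1$ then $g(x)=0$, $g(y)=w(y)-1>0$, $z(x)=w(x)\le1=z(y)$ so both factors are $\le0$... one checks in each of the (by symmetry, six) cases that the product is nonnegative; the key structural reason is that $g$ and $z$ are both nondecreasing functions of $w$ (as a composition), hence $g\circ w$ and $z\circ w$ are ``comonotone'' and $(g(x)-g(y))(z(x)-z(y))\ge 0$. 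Adding the two nonnegative contributions yields $(g,z)_W\ge0$, which is the claim. This comonotonicity/case-analysis step is the heart of the matter and the only place where anything beyond citing standard embedding theorems is needed; everything else is bookkeeping.
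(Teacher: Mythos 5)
Your proposal is correct. Note, however, that the paper does not actually prove Lemma \ref{tm:propFrac}: it simply cites \cite[Section 6]{sparse} and \cite[Sections 6, 7]{hitch}, so there is no internal argument to compare against line by line. Your items (i), (iii) and (iv) are exactly the standard facts those references contain (zero extension plus fractional Rellich--Kondrachov for $s<1/2$, the definitions of $H_0^s$ and $\tilde H^s$ for density, and the fractional Sobolev/Morrey embeddings), transported to all four realizations of $W$ via the norm equivalences in \eqref{eq:equiv_spaces}. For the one substantive item, (ii), your comonotonicity argument --- writing $g:=w-z$, observing that both $t\mapsto\max(-1,\min(t,1))$ and $t\mapsto t-\max(-1,\min(t,1))$ are nondecreasing, and deducing $(g(x)-g(y))(z(x)-z(y))\ge 0$ together with $gz\ge0$ pointwise --- is precisely the style of pointwise case analysis the paper itself uses to prove the closely related Lemma \ref{lm:inequPosPart} (the inequality $(w,w_+)_W\le\|w\|_W^2$), including the reduction to the kernel representation \eqref{eq:spectralInt} to cover the spectral space. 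The one-Lipschitz bound $|z(x)-z(y)|\le|w(x)-w(y)|$ giving $z\in W$ is also the expected argument. So your write-up is a faithful, self-contained reconstruction of what the paper delegates to the literature; the only thing it buys over the paper's citation is that the reader sees why the monotonicity structure of the truncation is what makes (ii) work, which is the same mechanism reused later in Lemma \ref{lm:inequPosPart} and in the $s_{k,n}$ test functions.
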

\begin{proof}
    This is shown in \cite[Section 6]{sparse} and \cite[Sections 6, 7]{hitch}.
\end{proof}

\begin{lemma}\label{lm:inequPosPart}
Let $W$ as in \eqref{eq_def_W} and let $w\in W$. Then $(w,w_+)_W \le \|w\|_W^2$.
\end{lemma}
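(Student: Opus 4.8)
The plan is to reduce the claim to the double-integral representation of the $W$-inner product that is common to all four choices of $W$ in \eqref{eq_def_W}: for $W=H^s(\Omega)$ this is the Gagliardo form from the definition, for $W=\mathbb{H}^s(\Omega)$ it is the one in \eqref{eq:spectralInt}, and for $W=H_0^s(\Omega)$ or $W=\tilde H^s(\Omega)$ one extends functions by zero to $\R^d\setminus\Omega$ and uses the $H^s(\R^d)$-inner product. In every case one can write
\[
(u,v)_W = \int G_0\, uv \dd x + \frac12 \int\int \big(u(x)-u(y)\big)\big(v(x)-v(y)\big)\, G(x,y)\dd y\dd x,
\]
where the potential $G_0$ is non-negative and the kernel $G$ is non-negative and symmetric, the integrals running over $\Omega$ (respectively over $\R^d$ in the cases $W=H_0^s(\Omega),\tilde H^s(\Omega)$). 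Before using this I would record that $w_+\in W$, so that $(w,w_+)_W$ is meaningful: since $t\mapsto t_+$ is $1$-Lipschitz and maps $0$ to $0$, we have $|w_+|\leq|w|$ and $|w_+(x)-w_+(y)|\leq|w(x)-w(y)|$, which bounds both contributions to $\|w_+\|_W$, while the vanishing outside $\Omega$, where relevant, is inherited from $w$.

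The core of the proof is an elementary pointwise computation, in the spirit of the proof of Lemma \ref{tm:propFrac}(ii). For the local term, using $w=w_++w_-$ and $w_+w_-=0$ one gets $w\,w_+=w_+^2$ and $w^2=w_+^2+w_-^2$, hence $w^2-w\,w_+=w_-^2\geq0$. For the nonlocal term, set $a:=w(x)$ and $b:=w(y)$; a direct expansion gives
\[
(a-b)^2-(a-b)(a_+-b_+) = (a-b)(a_--b_-).
\]
Since $t\mapsto\min(t,0)$ is nondecreasing, the factors $a-b$ and $a_--b_-$ have the same sign, so this quantity is $\geq0$ for all real $a,b$, hence for almost all $x,y$.

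Combining these two facts with $G_0\geq0$ and $G\geq0$ yields
\[
\|w\|_W^2-(w,w_+)_W = \int G_0\, w_-^2 \dd x + \frac12 \int\int \big(w(x)-w(y)\big)\big(w_-(x)-w_-(y)\big)\, G(x,y)\dd y\dd x \geq 0,
\]
which is exactly the assertion $(w,w_+)_W\leq\|w\|_W^2$. I expect the only mildly delicate point to be verifying that one and the same double-integral representation with non-negative symmetric kernel really covers all of \eqref{eq_def_W} — in particular the spectral space $\mathbb{H}^s(\Omega)$, handled via \eqref{eq:spectralInt}, and the range $s\geq\frac12$ for $H_0^s(\Omega)$ and $\tilde H^s(\Omega)$, handled via extension by zero; everything else is elementary.
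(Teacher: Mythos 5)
Your proposal is correct and follows essentially the same route as the paper: both reduce the claim to the pointwise inequality $(w(x)-w(y))(w_+(x)-w_+(y))\le (w(x)-w(y))^2$ and then integrate against the non-negative symmetric kernel in the common double-integral representation of the $W$-inner product (using \eqref{eq:spectralInt} for the spectral space). The only difference is cosmetic: you verify the pointwise inequality via the identity $(a-b)^2-(a-b)(a_+-b_+)=(a-b)(a_--b_-)$ and monotonicity of $t\mapsto t_-$, whereas the paper checks four sign cases.
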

\begin{proof}
    One can directly see that the inequality holds for the $L^2(\Omega)$-part in the $W$-norm, i.e., $(w,w_+)_{L^2(\Omega)} \le \|w\|_{L^2(\Omega)}^2$.
    Let $w: \Omega \to \R$. Then it holds
    
    \begin{equation} \label{eq_w_pos_part}
 (w_+(x)-w_+(y))\cdot (w(x)-w(y))\leq (w(x)-w(y))^2
 \end{equation}
     for a.a.  $x,y \in \Omega$. This can be seen by considering the following cases:
    \begin{enumerate}
        \item If $w(x)\ge 0$ and $w(y)\ge 0$, it holds  $(w_+(x)-w_+(y))\cdot (w(x)-w(y)) = (w(x)-w(y))^2.$
        \item For $w(x)\le 0$ and $w(y)\le 0$ we get $(w_+(x)-w_+(y))\cdot (w(x)-w(y)) = 0 \le (w(x)-w(y))^2.$
        \item If $w(x)\ge 0$ and $w(y)\le 0$, it holds  $(w_+(x)-w_+(y))\cdot (w(x)-w(y)) = (w_+(x))\cdot (w(x)-w(y)) \le (w(x)-w(y))^2.$
        \item Similarly, for $w(x)\le 0$ and $w(y)\ge 0$ we obtain $(w_+(x)-w_+(y))\cdot (w(x)-w(y)) = (-w_+(y))\cdot (w(x)-w(y)) \le (w(x)-w(y))^2.$
    \end{enumerate}
    Analogously, we can prove \eqref{eq_w_pos_part} for all $w: \R^d\to \R$ and all $x,y \in \R^d$.
    Multiplying the inequality \eqref{eq_w_pos_part} with the kernels and integrating on $\Omega\times \Omega$ or $\R^d\times \R^d$ proves
    the corresponding inequality for the double integrals.
    By definition of the inner product in $W$ this yields the result, where for the spectral fractional Sobolev space $W=\mathbb{H}^s(\Omega)$,
    we used the integral formulation \eqref{eq:spectralInt}.
\end{proof}

\section{Existence of Solutions}\label{sec:exSol}
We now want to prove existence of solutions of the above introduced minimization problem
\begin{align*}
    \underset{(u,w)\in K}{\min} f(u) + \frac{\alpha}{2}\|u\|_U^2+\frac{\beta}{2}\|w\|_W^2+\gamma \int_{\Omega}|w|^p\dd x.
\end{align*}
Let us recall the definition of $U = L^2(I \times \Omega)$ and $W$ in \eqref{eq_def_W}.
Note that the set $K$ of admissible functions defined in \eqref{eq_def_K} is convex.
First, we state some assumptions that should hold throughout the paper.
\begin{assumption}\label{ass:f}
\begin{enumerate}
    \item The function $f\colon U\to\R$ is weakly lower semicontinuous and bounded from below by an affine function, i.e., there are $g\in U^*$, $c\in \R$ such that $f(u) \geq g(u) + c$ for all $u\in U$.
    \item Furthermore, $f\colon U\to\R$ is continuously Fréchet differentiable.
    \item The constants $\alpha$, $\beta$, $\gamma$ are positive, and $p\in (0,1)$.
\end{enumerate}
\end{assumption}
In this paper, we identify $U=L^2(I \times \Omega)$ with its dual space.
Accordingly, $f'(u)\in U^*$ is identified with its Riesz representative $f'(\bar u)\in U$.

\begin{theorem}\label{tm:exsol}
Under Assumption \eqref{ass:f}, the minimization problem \eqref{eq:minprob}--\eqref{eq:conduw} admits a solution.
\end{theorem}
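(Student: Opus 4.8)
The plan is to use the direct method of the calculus of variations. First I would verify that the feasible set $K$ is nonempty (it contains $(0,0)$) and that the objective is bounded below along feasible sequences, so that the infimum $m$ is finite; here the lower bound on $f$ by an affine functional together with the coercive term $\frac{\alpha}{2}\|u\|_U^2$ does the job, after absorbing the linear term $g(u)$ via Young's inequality. Then take a minimizing sequence $(u_k,w_k)\in K$. Coercivity in $U$ and in $W$ (from the $\frac{\alpha}{2}\|u\|_U^2$ and $\frac{\beta}{2}\|w\|_W^2$ terms, noting $\gamma\int_\Omega|w|^p\,dx\ge 0$) gives boundedness of $\{u_k\}$ in $U$ and $\{w_k\}$ in $W$. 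Passing to a subsequence, $u_k\rightharpoonup \bar u$ weakly in $U$ and $w_k\rightharpoonup \bar w$ weakly in $W$; by the compact embedding $W\hookrightarrow L^2(\Omega)$ (Lemma~\ref{tm:propFrac}(i)) we may also assume $w_k\to\bar w$ strongly in $L^2(\Omega)$, hence (up to a further subsequence) pointwise a.e.\ in $\Omega$.

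**Next I would check feasibility of the limit**, i.e.\ $(\bar u,\bar w)\in K$. The constraint $|u_k(t,x)|\le w_k(x)$ a.e.\ can be written as the two linear inequalities $u_k(t,x)-w_k(x)\le 0$ and $-u_k(t,x)-w_k(x)\le 0$, so $K$ is a closed convex subset of $U\times W$ (closedness in the strong topology plus convexity gives weak closedness by Mazur's lemma). Since $(u_k,w_k)\rightharpoonup(\bar u,\bar w)$ weakly in $U\times W$, the limit lies in $K$. Alternatively one can argue directly: $w_k\to\bar w$ a.e.\ and, for the $u$-part, test the inequality against nonnegative functions and pass to the limit using weak convergence of $u_k$ and strong convergence of $w_k$.

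**Finally I would pass to the limit in the objective.** The term $f$ is weakly lower semicontinuous by Assumption~\ref{ass:f}(1); the norm terms $\frac{\alpha}{2}\|\cdot\|_U^2$ and $\frac{\beta}{2}\|\cdot\|_W^2$ are weakly lower semicontinuous as (squares of) Hilbert space norms; and for the sparsity term, since $w_k\to\bar w$ pointwise a.e., Fatou's lemma gives $\int_\Omega|\bar w|^p\,dx\le\liminf_k\int_\Omega|w_k|^p\,dx$ — this is precisely where the compact embedding is essential, because $w\mapsto\int_\Omega|w|^p\,dx$ is \emph{not} weakly lower semicontinuous in $W$ for $p<1$. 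Adding up, the objective at $(\bar u,\bar w)$ is $\le\liminf$ of the objective along the minimizing sequence, which equals $m$; hence $(\bar u,\bar w)$ is a minimizer.

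**The main obstacle** is the non-weak-lower-semicontinuity of the $L^p$-pseudo-norm, which is the whole reason the fractional Sobolev regularization was introduced; the resolution — upgrading weak convergence in $W$ to a.e.\ convergence via the compact embedding and then invoking Fatou — is the crux of the argument, and everything else is routine bookkeeping with coercivity and convexity.
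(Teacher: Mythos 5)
Your proof is correct and follows essentially the same route as the paper: direct method, coercivity from the quadratic terms plus the affine lower bound on $f$, weak closedness of the convex set $K$, and the compact embedding $W\hookrightarrow L^2(\Omega)$ to handle the $L^p$-term. The only cosmetic difference is that you use Fatou's lemma to get lower semicontinuity of $w\mapsto\int_\Omega|w|^p\dd x$ along the a.e.-convergent subsequence, whereas the paper invokes a cited lemma giving full continuity of this term under strong $L^2$-convergence; both suffice here.
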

\begin{proof}
This statement can be shown with standard arguments. Let $(u_n,w_n) $ be a minimizing sequence. Then Assumption \ref{ass:f} yields boundedness of $(u_n)$ in $U$ and also of $(f(u_n) + \frac{\alpha}{2}\|u_n\|_U^2)$, which leads to boundedness of $(w_n)$ in $W$. Hence, after possibly extracting a subsequence there are $\bar{u} \in U$ and $\bar{w}\in W$ such that $u_n \rightharpoonup \bar u$ in $U$ and $w_n \rightharpoonup \bar{w}$ in $W$ with $w_n \to \bar{w}$ in $L^2(\Omega)$.
This implies $\gamma \int_{\Omega}|w_n|^p\dd x \to \gamma \int_{\Omega}|\bar w|^p\dd x$, see \cite[Lemma 5.1]{lp_cont}.
Since $K$ is closed and convex and therefore weakly closed, it holds $(\bar u,\bar{w})\in K$. Thus, \begin{align*}
     \liminf_{n \to \infty} f(u_n) + \frac{\alpha}{2}\|u_n\|_U^2+\frac{\beta}{2}\|w_n\|_W^2+\gamma \int_{\Omega}|w_n|^p\dd x \\ \geq f(\bar u) + \frac{\alpha}{2}\|\bar u\|_U^2+\frac{\beta}{2}\|\Bar{w}\|_W^2+\gamma \int_{\Omega}|\bar{w}|^p\dd x
\end{align*} due to weak lower semicontinuity.
This shows that $(\bar u,\bar{w})$ is a solution of \eqref{eq:minprob}.
\end{proof}

\section{Necessary optimality condition}

The aim of this section is to derive a necessary optimality condition for problem \eqref{eq:minprob}.
Throughout this section, $(\bar u, \Bar{w})$ will be a local solution of the non-smooth problem \eqref{eq:minprob}.
Since the $L^p$-pseudo-norm is not differentiable, we employ a smoothing scheme.

\subsection{Smooth auxiliary problem} \label{sec:auxProb}
Let us construct a sequence of auxiliary minimization problems that are smooth enough to directly obtain necessary optimality conditions. These optimality conditions can then be used to derive an optimality condition for the original problem \eqref{eq:minprob}.

 To do so, we use a smoothing scheme as in \cite{sparse, lp_cont}. There, a smooth approximation of $t\mapsto t^{p/2}$ is introduced by
 \begin{equation*}
     \psi_{\epsilon}(t) := \begin{cases}
          \frac{p}{2}\frac{t}{\epsilon^{2-p}}+(1-\frac{p}{2})\epsilon^p & \textrm{if } t \in [0,\epsilon^2), \\ t^{p/2} & \textrm{if } t \geq \epsilon^2,
     \end{cases}
 \end{equation*}
 for some $\epsilon\ge 0$.
 Then \begin{align*}
     G_{\epsilon}(w):= \int_{\Omega} \psi_{\epsilon}(|w|^2) \dd x
 \end{align*} is an approximation of $\int_{\Omega}|w|^p \dd x$. Note that it holds $G_0(w) = \int_{\Omega}|w|^p \dd x$.

For the convergence analysis later on we use the following results of \cite{sparse}.

\begin{lemma}
{\normalfont(\cite[Lemma 4.1, 4.2, 4.3]{sparse})}
\label{tm:psieps} Let $p\in(0,2)$. Then $\psi_{\epsilon}: [0, \infty) \to [0,\infty)$
has the following properties:
\begin{enumerate}[label=(\roman*)]
\item The function $\psi_{\epsilon}$ is concave.
\item $\epsilon \mapsto \psi_{\epsilon}(t)$ is monotonically increasing for all $t\geq 0$.
\item For sequences $(w_k)$, $(\epsilon_k)$ such that $w_k \to w$ in $L^1(\Omega)$ and $\epsilon_k \to \epsilon \geq 0$ it holds $G_{\epsilon_k}(w_k) \to G_{\epsilon}(w)$.
\item Let $\epsilon>0$. Then $\psi_{\epsilon}$ is continuously differentiable with derivative \begin{align*}
	\psi_{\epsilon}'(t)=\frac{p}{2}\min(\epsilon^{p-2},t^{\frac{p-2}{2}}),
\end{align*} and $w \mapsto G_{\epsilon}(w)$ is Fréchet differentiable from $L^2(\Omega)$ to $\R$ with \begin{align*}
G_{\epsilon}'(w)h = \int_{\Omega} 2 w(x) \psi_{\epsilon}'(w(x)^2) h(x) \dd x.
\end{align*}
\end{enumerate}

\end{lemma}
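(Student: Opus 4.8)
The plan is to verify the three asserted properties of $\psi_\epsilon$ and the two differentiability statements in turn, since each is a short computation from the explicit formula. Note that the statement says "Let $p\in(0,2)$" but the displayed formula for $\psi_\epsilon'$ contains $t^{(p-2)/2}$; I would first record that $\psi_\epsilon$ is piecewise defined with a linear branch on $[0,\epsilon^2)$ and the power branch $t^{p/2}$ on $[\epsilon^2,\infty)$, and that these two branches agree in value at $t=\epsilon^2$ (both equal $\epsilon^p$) and in first derivative at $t=\epsilon^2$ (the linear slope is $\frac{p}{2}\epsilon^{p-2}$, which matches $\frac{d}{dt}t^{p/2}=\frac{p}{2}t^{p/2-1}$ evaluated at $\epsilon^2$). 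This $C^1$-matching is the crux of part (iv) and also makes the other parts transparent.

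For \emph{concavity} (i): on $[0,\epsilon^2)$ the function is affine, hence concave; on $(\epsilon^2,\infty)$ it is $t^{p/2}$ with $p/2<1$, hence concave; and since the derivatives match at the junction $t=\epsilon^2$, the global $\psi_\epsilon$ has a nonincreasing derivative on all of $[0,\infty)$, so it is concave. For \emph{monotonicity in $\epsilon$} (ii): fix $t\ge0$. If $t\ge\epsilon^2$ for all $\epsilon$ under consideration this is clear since $\psi_\epsilon(t)=t^{p/2}$ is independent of $\epsilon$; the interesting regime is $\epsilon^2>t$, where $\psi_\epsilon(t)=\frac{p}{2}t\epsilon^{p-2}+(1-\frac p2)\epsilon^p$, and one checks $\frac{d}{d\epsilon}\psi_\epsilon(t)=\frac{p(p-2)}{2}t\epsilon^{p-3}+p(1-\frac p2)\epsilon^{p-1}=p(1-\frac p2)\epsilon^{p-3}(\epsilon^2-t)\ge0$ on this regime; continuity at $\epsilon^2=t$ glues the two regimes. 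For the \emph{convergence} (iii): combine the pointwise convergence $\psi_{\epsilon_k}(|w_k(x)|^2)\to\psi_\epsilon(|w(x)|^2)$ for a.a.\ $x$ (after passing to a pointwise-convergent subsequence of $w_k$, using joint continuity of $(\epsilon,t)\mapsto\psi_\epsilon(t)$) with a domination argument — from the explicit formula one bounds $\psi_{\epsilon_k}(t)\le C(1+t)^{p/2}\le C(1+t)$, so $L^1$-convergence of $w_k$ gives an integrable majorant and dominated convergence applies; a subsequence-subsequence argument upgrades this to convergence of the full sequence.

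For (iv), differentiate each branch: on $(0,\epsilon^2)$ one gets $\psi_\epsilon'(t)=\frac p2\epsilon^{p-2}$, on $(\epsilon^2,\infty)$ one gets $\psi_\epsilon'(t)=\frac p2 t^{(p-2)/2}$, and these agree at $\epsilon^2$ by the matching observed above, so $\psi_\epsilon\in C^1([0,\infty))$ with $\psi_\epsilon'(t)=\frac p2\min(\epsilon^{p-2},t^{(p-2)/2})$ (the $\min$ correctly selects the constant value $\frac p2\epsilon^{p-2}$ when $t\le\epsilon^2$ and $\frac p2 t^{(p-2)/2}$ when $t\ge\epsilon^2$, since $t\mapsto t^{(p-2)/2}$ is decreasing for $p<2$). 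For the Fréchet differentiability of $G_\epsilon$, I would apply the chain rule to $w\mapsto\int_\Omega\psi_\epsilon(w(x)^2)\,dx$: the inner map $w\mapsto w^2$ is $C^1$ from $L^2(\Omega)$ to $L^1(\Omega)$, and the key point is that $\psi_\epsilon'$ is \emph{bounded} on $[0,\infty)$ by $\frac p2\epsilon^{p-2}$ (here $\epsilon>0$ is essential), so the superposition operator $t\mapsto\psi_\epsilon(t)$ maps $L^1(\Omega)$ to $L^1(\Omega)$ and is Fréchet differentiable with derivative multiplication by $\psi_\epsilon'$. Composing gives $G_\epsilon'(w)h=\int_\Omega 2w(x)\psi_\epsilon'(w(x)^2)h(x)\,dx$; one also notes $|2w\psi_\epsilon'(w^2)|\le p\epsilon^{p-2}|w|\in L^2(\Omega)$, so the functional $h\mapsto G_\epsilon'(w)h$ is indeed bounded on $L^2(\Omega)$. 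The main obstacle, such as it is, is the domination step in (iii) and checking the remainder estimate in the Fréchet-differentiability argument carefully — everything else is an exercise with the explicit piecewise formula — and since this lemma is quoted verbatim from \cite{sparse} the cleanest route is to simply cite that reference, as the authors do.
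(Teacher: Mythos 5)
The paper offers no proof of this lemma at all; it is imported verbatim by citation from \cite{sparse}. Your computations for (i), (ii) and the $C^1$-matching of the two branches at $t=\epsilon^2$ are correct and are exactly what one would write out. However, two of your steps contain genuine gaps. First, in (iii) your domination bound does not close: you estimate $\psi_{\epsilon_k}(t)\le C(1+t)^{p/2}\le C(1+t)$, but the argument of $\psi_{\epsilon_k}$ is $t=|w_k(x)|^2$, so this produces the majorant $C(1+|w_k|^2)$, which is not controlled by $L^1$-convergence of $(w_k)$. The usable bound is $\psi_{\epsilon_k}(|w_k|^2)\le \epsilon_k^p+|w_k|^p$, and $|w_k|^p\le 1+|w_k|$ only for $p\le 1$; one then needs the generalized dominated convergence theorem with the varying majorants $1+|w_k|\to 1+|w|$ in $L^1$. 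For $p\in(1,2)$ this route fails, and in fact the statement under mere $L^1$-convergence is false there: take $w_k=a_k\chi_{(0,b_k)}$ with $a_k^{p}b_k=1$ and $a_kb_k\to 0$, so $w_k\to 0$ in $L^1$ while $G_{\epsilon}(w_k)\not\to G_{\epsilon}(0)$. So your argument at best covers $p\le 1$ (which is all the paper ever uses), and even then the inequality chain you wrote needs repair.

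Second, in (iv) the chain-rule route through $L^1$ is broken: a non-affine Nemytskii operator from $L^1(\Omega)$ to $L^1(\Omega)$ is never Fr\'echet differentiable, and the integral functional $v\mapsto\int_\Omega\psi_\epsilon(v)\dd x$ is likewise not Fr\'echet differentiable on $L^1(\Omega)$ (test at $v=0$ with $h=c\chi_E$, $|E|\to 0$, $c$ large but fixed: the remainder divided by $\|h\|_{L^1}$ tends to a nonzero constant). Boundedness of $\psi_\epsilon'$ gives you Lipschitz continuity and G\^ateaux differentiability, not Fr\'echet differentiability, so you cannot compose derivatives the way you propose. The correct argument works directly in $L^2$: set $\phi(s):=\psi_\epsilon(s^2)$ and check that $\phi'(s)=2s\psi_\epsilon'(s^2)=p\,\operatorname{sgn}(s)\min(\epsilon^{p-2}|s|,|s|^{p-1})$ is globally Lipschitz with constant of order $\epsilon^{p-2}$; then
\[
\left|\int_\Omega \bigl(\phi(w+h)-\phi(w)-\phi'(w)h\bigr)\dd x\right|\le \tfrac{L}{2}\|h\|_{L^2(\Omega)}^2=o\bigl(\|h\|_{L^2(\Omega)}\bigr),
\]
which gives Fr\'echet differentiability of $G_\epsilon$ on $L^2(\Omega)$ in one line. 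Since the authors simply cite \cite{sparse}, your instinct to do the same is sound, but as a standalone proof your write-up needs these two repairs.
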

We remark that while $\psi_{\epsilon}$ is concave, this is not the case for the composition $t\mapsto \psi_{\epsilon}(t^2)$ that we use in the definition of $G$ to approximate the $L^p$-pseudo-norm.\\
Now we replace the $L^p$-pseudo-norm by its smooth approximation to obtain the auxiliary objective function \begin{align} \label{eq:phi_def}
    \Phi_{\epsilon}(u,w) := f(u) + \frac{\alpha}{2}\|u\|_U^2+\frac{\beta}{2}\|w\|_W^2+\gamma G_{\epsilon}(w).
\end{align}
For $\epsilon=0$, the function $\Phi_0$ reduces to the original objective.

 Next, we also penalize the violation of the constraint $|u|\leq w$ in the original minimization problem.  To do so, we relax problem \eqref{eq:minprob} to
\begin{equation*}
    \underset{(u,w) \in U\times W}{\min} \Phi_{\epsilon}(u,w) +  \frac1{2\epsilon} \| (w-u)_-\|_U^2 +\frac1{2\epsilon} \| (w+u)_-\|_U^2,
\end{equation*}
adding the $L^2(I\times\Omega)$-norm of the parts that violate the constraint and multiplying them with a penalization parameter $\frac1{\epsilon}$.
Here, we use the notation
\[
 \| (w-u)_-\|_U^2 := \int_{\Omega}\int_I ((w(x)-u(t,x))_-)^2 \dd t \dd x,
\]
and similarly for $\| (w+u)_-\|_U^2$,
using the constant extension of $w$ from $\Omega$ to $I \times \Omega$.

Let $(\bar u, \Bar{w})$ be a local  solution of the non-smooth problem \eqref{eq:minprob}.
Then there is $\rho>0$ such that $\Phi_0(\bar u,\Bar{w}) \leq \Phi_0(u,w)$ for all $(u,w)$ with $\|u-\bar u\|_{U} \leq \rho, \|w-\Bar{w}\|_{W} \leq \rho$ and $(u,w)\in K$.

In order to enforce convergence of solutions of relaxed problems to $(\bar u, \bar w)$, we
consider the auxiliary problem
\begin{equation} \begin{gathered}\label{eq:auxprob}
    \underset{(u,w) \in U\times W}{\min} \Phi_{\epsilon}(u,w) +  \frac12 \|u-\bar u\|^2_{U} + \frac12 \|w-\Bar{w}\|_{L^2(\Omega)}^2
    + \frac1{2\epsilon} \| (w-u)_-\|_U^2 +\frac1{2\epsilon} \| (w+u)_-\|_U^2    \\
    \textrm{subject to }  \|u-\bar u\|_{U} \leq \rho, \|w-\Bar{w}\|_{W} \leq \rho  .
    \end{gathered}
\end{equation}
Existence of solution of this problem can be proven by standard arguments similarly as in Theorem \ref{tm:exsol}.

Note that the penalization is an additional step compared to the procedure in \cite{sparse}, where the considered minimization problem is unconstrained.
Without the penalization, we were not able to prove boundedness of both the Lagrange multipliers to the inequality constraints \eqref{eq:conduw} and the derivatives of the smooth approximation $G_{\epsilon}$ in the optimality conditions of a sequence of auxiliary problems \eqref{eq:auxprob}.
The penalization enables us to obtain boundedness results for approximations of the multipliers to the inequality constraints, see  Lemma \ref{tm:mult_bounded} below. This then implies also boundedness of the derivatives of $G_{\epsilon}$.

\subsection{Passing to the limit}\label{sec:oc}
In order to derive an optimality condition for problem \eqref{eq:minprob}, we pass to the limit $\epsilon\searrow 0$ in the optimality condition of problem \eqref{eq:auxprob}. First, we show that solutions of \eqref{eq:auxprob} converge to the reference solution $(\bar u,\bar w)$ for $\epsilon \searrow 0$. We start with an auxiliary lemma.

\begin{lemma}\label{tm:auxLimSupInf}
Let $N\in \N$.
Let $(a^1_k), \dots, (a^N_k)$ be  sequences with
\[
 \liminf_{k \to \infty} a_k^i \in \R \quad \forall i=1,\dots, N
\]
and
\[
\limsup_{k\to\infty} \left(\sum_{i=1}^N a^i_k \right) \le \sum_{i=1}^N ( \liminf_{k\to\infty} a^i_k ).
\]
Then all sequences $(a^1_k), ..., (a^N_k)$ are convergent with limits in $\R$.
\end{lemma}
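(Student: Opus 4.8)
The plan is to prove this by a standard argument that plays the $\liminf$ of each individual term against the $\limsup$ of the sum. First I would note the elementary superadditivity of $\liminf$: for any sequences, $\liminf_{k\to\infty}(\sum_{i=1}^N a_k^i) \ge \sum_{i=1}^N \liminf_{k\to\infty} a_k^i$, provided the right-hand side is not of the form $\infty-\infty$; here it is a finite sum of real numbers by hypothesis. Combining this with the assumed inequality $\limsup_{k\to\infty}(\sum_{i=1}^N a_k^i) \le \sum_{i=1}^N \liminf_{k\to\infty} a_k^i$ sandwiches the quantities and forces $\lim_{k\to\infty}\sum_{i=1}^N a_k^i = \sum_{i=1}^N \liminf_{k\to\infty} a_k^i \in \R$; in particular the sum sequence converges.

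Next I would isolate a single index, say $i=N$. Write $a_k^N = \big(\sum_{i=1}^N a_k^i\big) - \sum_{i=1}^{N-1} a_k^i$. Taking $\limsup$ and using that the first bracket converges while $\limsup(-x_k) = -\liminf(x_k)$, I get
\[
\limsup_{k\to\infty} a_k^N \le \lim_{k\to\infty}\Big(\sum_{i=1}^N a_k^i\Big) - \liminf_{k\to\infty}\sum_{i=1}^{N-1} a_k^i.
\]
Applying superadditivity of $\liminf$ again to the $(N-1)$-term sum (its summands have finite $\liminf$ by hypothesis) and then the established formula for the full sum, the right-hand side is bounded above by $\sum_{i=1}^N \liminf_k a_k^i - \sum_{i=1}^{N-1}\liminf_k a_k^i = \liminf_{k\to\infty} a_k^N$. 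Hence $\limsup_k a_k^N \le \liminf_k a_k^N$, which together with the trivial reverse inequality gives convergence of $(a_k^N)$ to a real limit. By symmetry the same holds for every index $i$, so all $N$ sequences converge.

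The argument is essentially bookkeeping with $\liminf/\limsup$ inequalities, so there is no deep obstacle; the only point requiring a little care is making sure one never encounters an indeterminate $\infty - \infty$ when splitting sums. This is handled by the standing hypothesis that each $\liminf_k a_k^i$ is a real number, which keeps every partial sum of $\liminf$'s finite and every application of the superadditivity inequality legitimate. One could alternatively present the whole thing inductively on $N$ (base case $N=1$ trivial, inductive step peeling off one term as above), but the direct argument above is already short enough to write out in full without induction.
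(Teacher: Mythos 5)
Your argument is correct. The only point that needs care --- avoiding an indeterminate $\infty-\infty$ when you subtract the partial sum $\sum_{i=1}^{N-1}a_k^i$ --- is exactly the one you flag, and it is indeed covered: superadditivity of $\liminf$ together with the finiteness of each $\liminf_k a_k^i$ keeps $\liminf_k \sum_{i=1}^{N-1}a_k^i$ bounded below by a real number, so every application of the sub/superadditivity inequalities is legitimate. Your route differs from the paper's in structure, though it rests on the same elementary $\liminf$/$\limsup$ bookkeeping: the paper argues by induction on $N$, using the inequality $\limsup_k\bigl(\sum_{i=1}^N a_k^i\bigr)+\liminf_k a_k^{N+1}\le \limsup_k\bigl(\sum_{i=1}^{N+1}a_k^i\bigr)$ to reduce to $N$ sequences, invoking the induction hypothesis for the first $N$, and then cancelling those convergent sequences to handle the last one. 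You instead first sandwich the full sum, obtaining the slightly stronger conclusion that $\sum_{i=1}^N a_k^i$ converges to $\sum_{i=1}^N\liminf_k a_k^i$ (a fact the paper never states explicitly), and then isolate each index symmetrically in one step. Your version is marginally more informative and avoids induction; the paper's version never needs to discuss the limit value of the sum and treats only one ``new'' sequence per step, which keeps each inequality manipulation minimal. Both are complete proofs.
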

\begin{proof}
The claim is elementary for $N=1$. Let the claim be proven for some $N\ge1$. Consider $N+1$ sequences satisfying the assumptions of the lemma.
Then
\begin{equation}\label{eq:eqInAuxLimLemma}
 \limsup_{k\to\infty} \left(\sum_{i=1}^N a^i_k \right) + \liminf_{k\to\infty} a^{N+1}_k
 \le \limsup_{k\to\infty} \left(\sum_{i=1}^{N+1} a^i_k \right)
 \le \sum_{i=1}^{N+1} ( \liminf_{k\to\infty} a^i_k ).
\end{equation}
This implies $\limsup_{k\to\infty} \left(\sum_{i=1}^N a^i_k \right) \le \sum_{i=1}^N ( \liminf_{k\to\infty} a^i_k )$, and hence the sequences $(a^1_k), ..., (a^N_k)$
converge by induction hypothesis.
Cancelling these convergent sequences  in \eqref{eq:eqInAuxLimLemma} we obtain $\limsup_{k\to\infty}a^{N+1}_k = \liminf_{k\to\infty} a^{N+1}_k$, so $(a^{N+1}_k)$ converges as well.

\end{proof}

\begin{lemma}\label{lem_aux_strong_convergence}
  Let $\epsilon_k \searrow 0$ and let $(u_k,w_k)$ be the solution of \eqref{eq:auxprob} for the corresponding smoothing parameter $\epsilon_k$.
  Suppose $u_k\rightharpoonup u^*$ in $U$, $w_k \rightharpoonup w^*$ in $W$,
  then
  \[
   \liminf_{k\to\infty} \Phi_{\epsilon_k} (u_k,w_k) \ge \Phi_0(u^*,w^*).
  \]
  If in addition
  \begin{equation}\label{eq:asslimsupPhi0}
  \Phi_0(u^*,w^*) \ge \limsup_{k \to \infty} \Phi_{\epsilon_k}(u_k,w_k)
  \end{equation}
  is satisfied, then $u_k \to u^*$ in $U$ and $w_k \to w^*$ in $W$.
\end{lemma}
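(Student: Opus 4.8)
The plan is to handle the four summands of $\Phi$ individually: three of them ($f$, the $U$-norm, the $W$-norm) are weakly sequentially lower semicontinuous, while the smoothed sparsity term $G_{\epsilon_k}(w_k)$ converges along the sequence as an honest limit. The first assertion then follows from additivity of $\liminf$, and the second is obtained by peeling off the convergent $G$-term and applying Lemma \ref{tm:auxLimSupInf} to the remaining three terms.

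First I would record the behaviour of the individual terms. Since $u_k \rightharpoonup u^*$ in the Hilbert space $U$, convexity and continuity of $u\mapsto \frac\alpha2\|u\|_U^2$ give $\liminf_k \frac\alpha2\|u_k\|_U^2 \ge \frac\alpha2\|u^*\|_U^2$, and similarly $\liminf_k \frac\beta2\|w_k\|_W^2 \ge \frac\beta2\|w^*\|_W^2$. By Assumption \ref{ass:f}(1), $f$ is weakly lower semicontinuous, so $\liminf_k f(u_k) \ge f(u^*) \in \R$. For the sparsity term, the compact embedding $W\hookrightarrow L^2(\Omega)$ of Lemma \ref{tm:propFrac}(i) yields $w_k \to w^*$ in $L^2(\Omega)$, hence in $L^1(\Omega)$ since $\Omega$ is bounded; combined with $\epsilon_k \to 0$, Lemma \ref{tm:psieps}(iii) gives $G_{\epsilon_k}(w_k) \to G_0(w^*)$. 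Adding these four relations and using that the $\liminf$ of a sum is at least the sum of the $\liminf$s (with one summand being a genuine limit) gives $\liminf_k \Phi_{\epsilon_k}(u_k,w_k) \ge \Phi_0(u^*,w^*)$, which is the first claim.

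For the second claim I would write $\Phi_{\epsilon_k}(u_k,w_k) = a^1_k + a^2_k + a^3_k + \gamma G_{\epsilon_k}(w_k)$ with $a^1_k := f(u_k)$, $a^2_k := \frac\alpha2\|u_k\|_U^2$, $a^3_k := \frac\beta2\|w_k\|_W^2$. Since $\gamma G_{\epsilon_k}(w_k) \to \gamma G_0(w^*)$, hypothesis \eqref{eq:asslimsupPhi0} gives $\limsup_k(a^1_k+a^2_k+a^3_k) \le f(u^*) + \frac\alpha2\|u^*\|_U^2 + \frac\beta2\|w^*\|_W^2 \le \liminf_k a^1_k + \liminf_k a^2_k + \liminf_k a^3_k$. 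To apply Lemma \ref{tm:auxLimSupInf} with $N=3$ I would check that each $\liminf_k a^i_k$ is real: it is bounded below (the norm terms by $0$, and $a^1_k \ge g(u_k)+c$ together with boundedness of $(u_k)$ in $U$), and bounded above because $a^j_k \le (a^1_k+a^2_k+a^3_k) - \sum_{i\ne j}(\text{lower bound of }a^i_k)$ and the sum on the right has finite $\limsup$. Lemma \ref{tm:auxLimSupInf} then shows $a^1_k, a^2_k, a^3_k$ all converge in $\R$, so the $\limsup$-inequality above becomes $\lim_k a^1_k + \lim_k a^2_k + \lim_k a^3_k \le f(u^*) + \frac\alpha2\|u^*\|_U^2 + \frac\beta2\|w^*\|_W^2$; since each limit individually dominates the corresponding term on the right by lower semicontinuity, equality must hold termwise. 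In particular $\|u_k\|_U \to \|u^*\|_U$ and $\|w_k\|_W \to \|w^*\|_W$, which together with $u_k \rightharpoonup u^*$ in $U$ and $w_k \rightharpoonup w^*$ in $W$ upgrades to strong convergence in both Hilbert spaces.

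The argument is largely routine; the only delicate point is the bookkeeping needed to invoke Lemma \ref{tm:auxLimSupInf}, namely verifying that each $\liminf_k a^i_k$ lies in $\R$. The conceptual step that makes the proof work is that the smoothed sparsity term converges along the sequence (thanks to the compact embedding and Lemma \ref{tm:psieps}(iii)), so it can be cleanly separated from the three lower semicontinuous terms before the summation lemma is applied.
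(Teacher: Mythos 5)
Your proof is correct and follows essentially the same route as the paper: decompose $\Phi_{\epsilon_k}$ into its four summands, use weak lower semicontinuity for $f$ and the two squared norms, use Lemma \ref{tm:psieps}(iii) with the compact embedding for the $G$-term, and invoke Lemma \ref{tm:auxLimSupInf} to upgrade to termwise convergence and hence norm convergence. The only (immaterial) difference is that you peel off the convergent $G$-term and apply the summation lemma with $N=3$ while the paper keeps it as a fourth sequence; your explicit verification that each $\liminf_k a^i_k$ is real is a welcome detail the paper leaves implicit.
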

\begin{proof}
 By Lemma \ref{tm:psieps} (iii), it holds ${G_{\epsilon_k}(w_k) \to G_0(w^*)}$, which implies
 \begin{multline*}
 \liminf_{k\to\infty} \Phi_{\epsilon_k} (u_k,w_k)
 \\
 \begin{aligned} &\ge \liminf_{k\to\infty}f(u_k)+ \liminf_{k\to\infty}\frac{\alpha}{2}\|u_k\|_U^2+ \liminf_{k\to\infty} \frac{\beta}{2}\|w_k\|_W^2+ \liminf_{k\to\infty} \gamma G_{\epsilon}(w_k) \\
 &\ge \Phi_0(u^*,w^*)
\end{aligned}
\end{multline*}
 by weak lower semicontinuity of $f$ and the squared norms on $U$ and $W$. This shows the first part.
 For the second statement, we want to apply Lemma \ref{tm:auxLimSupInf} to the terms in $\Phi_{\epsilon_k}(u_k,w_k)$. The inequality above together with assumption \eqref{eq:asslimsupPhi0} yields
 \[
 \liminf_{k\to\infty}f(u_k)+ \liminf_{k\to\infty}\frac{\alpha}{2}\|u_k\|_U^2+ \liminf_{k\to\infty} \frac{\beta}{2}\|w_k\|_W^2+ \liminf_{k\to\infty} \gamma G_{\epsilon_k}(w_k) \ge \limsup_{k \to \infty} \Phi_{\epsilon_k}(u_k,w_k)
 ,
 \]
 and the assumptions of Lemma \ref{tm:auxLimSupInf} are satisfied.
 Therefore, we obtain convergence of all the terms in $\Phi_{\epsilon_k}(u_k,w_k)$.
 In particular, it holds $\|u_k\|_U \to \|u^*\|_U$ and $\|w_k\|_W\to \|w^*\|_W$ which together with weak convergence of $(u_k)$ and $(w_k)$ leads to the desired result.
\end{proof}

\begin{theorem} \label{tm:conv_uw}
    Let $\epsilon_k \searrow 0$, and let $(u_k,w_k)$ be solutions of \eqref{eq:auxprob} for the corresponding smoothing parameter $\epsilon_k$. Then $u_k \to \bar u$ in $U$ and $w_k \to \Bar{w}$ in $W$.
\end{theorem}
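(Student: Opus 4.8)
The plan is to exploit the optimality of $(u_k,w_k)$ for \eqref{eq:auxprob} by comparing with the feasible point $(\bar u,\bar w)$, and then to combine weak compactness with the local optimality of $(\bar u,\bar w)$ and Lemma \ref{lem_aux_strong_convergence}. First note that $(\bar u,\bar w)$ is admissible for \eqref{eq:auxprob}: the two inequality constraints hold trivially, and since $(\bar u,\bar w)\in K$ (see \eqref{eq_def_K}) the penalty terms $(\bar w\mp\bar u)_-$ vanish, so the objective of \eqref{eq:auxprob} at $(\bar u,\bar w)$ equals $\Phi_{\epsilon_k}(\bar u,\bar w)$. Testing the optimality of $(u_k,w_k)$ with $(\bar u,\bar w)$ yields
\begin{multline*}
\Phi_{\epsilon_k}(u_k,w_k)+\frac12\|u_k-\bar u\|_U^2+\frac12\|w_k-\bar w\|_{L^2(\Omega)}^2\\
+\frac1{2\epsilon_k}\|(w_k-u_k)_-\|_U^2+\frac1{2\epsilon_k}\|(w_k+u_k)_-\|_U^2\le\Phi_{\epsilon_k}(\bar u,\bar w).
\end{multline*}
By Lemma \ref{tm:psieps}(iii) the right-hand side converges to $\Phi_0(\bar u,\bar w)$, hence is bounded. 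The constraints $\|u_k-\bar u\|_U\le\rho$, $\|w_k-\bar w\|_W\le\rho$ make $(u_k,w_k)$ bounded in $U\times W$; passing to a subsequence, $u_k\rightharpoonup u^*$ in $U$, $w_k\rightharpoonup w^*$ in $W$, and by the compact embedding in Lemma \ref{tm:propFrac}(i) also $w_k\to w^*$ in $L^2(\Omega)$ and thus, via constant extension, in $U$. Using Assumption \ref{ass:f}(1) to bound $f(u_k)$ from below together with $G_{\epsilon_k}\ge0$, the quantity $\Phi_{\epsilon_k}(u_k,w_k)$ is bounded from below, so the displayed inequality forces $\|(w_k-u_k)_-\|_U^2+\|(w_k+u_k)_-\|_U^2=O(\epsilon_k)\to0$.

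The second step is to identify $(u^*,w^*)=(\bar u,\bar w)$. Since $v\mapsto\|v_-\|_U^2$ is convex and continuous, hence weakly lower semicontinuous on $U$, and $w_k\mp u_k\rightharpoonup w^*\mp u^*$ in $U$, the previous estimate gives $(w^*-u^*)_-=(w^*+u^*)_-=0$ a.e., i.e.\ $|u^*|\le w^*$, so $(u^*,w^*)\in K$. Weak lower semicontinuity of the norms also gives $\|u^*-\bar u\|_U\le\rho$ and $\|w^*-\bar w\|_W\le\rho$, so $(u^*,w^*)$ lies in the $\rho$-neighbourhood on which $(\bar u,\bar w)$ is a local minimizer of \eqref{eq:minprob}; in particular $\Phi_0(\bar u,\bar w)\le\Phi_0(u^*,w^*)$. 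Now take $\liminf$ in the displayed inequality, dropping the nonnegative penalty terms and using $\liminf_k\Phi_{\epsilon_k}(u_k,w_k)\ge\Phi_0(u^*,w^*)$ from Lemma \ref{lem_aux_strong_convergence}, weak lower semicontinuity of $\|\cdot-\bar u\|_U^2$, and the strong $L^2(\Omega)$-convergence of $(w_k)$, to obtain
\[
\Phi_0(u^*,w^*)+\frac12\|u^*-\bar u\|_U^2+\frac12\|w^*-\bar w\|_{L^2(\Omega)}^2\le\Phi_0(\bar u,\bar w)\le\Phi_0(u^*,w^*).
\]
Hence $u^*=\bar u$ in $U$ and $w^*=\bar w$ in $L^2(\Omega)$, and since $w^*,\bar w\in W$ this means $w^*=\bar w$ in $W$.

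Finally, I would upgrade weak to strong convergence via Lemma \ref{lem_aux_strong_convergence}: the displayed inequality gives $\Phi_{\epsilon_k}(u_k,w_k)\le\Phi_{\epsilon_k}(\bar u,\bar w)$, hence $\limsup_k\Phi_{\epsilon_k}(u_k,w_k)\le\lim_k\Phi_{\epsilon_k}(\bar u,\bar w)=\Phi_0(\bar u,\bar w)=\Phi_0(u^*,w^*)$, which is precisely hypothesis \eqref{eq:asslimsupPhi0}. Therefore $u_k\to\bar u$ in $U$ and $w_k\to\bar w$ in $W$ along the subsequence, and since the limit is independent of the subsequence, a subsequence--subsequence argument yields convergence of the whole sequence. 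The main obstacle is the second step, in particular verifying $(u^*,w^*)\in K$: this hinges on the $1/\epsilon_k$ weight of the penalty terms together with a uniform lower bound on $\Phi_{\epsilon_k}(u_k,w_k)$, without which one could not pass the constraint $|u|\le w$ to the limit and the comparison with the local minimizer $(\bar u,\bar w)$ would not be available.
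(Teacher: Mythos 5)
Your proof is correct and follows essentially the same route as the paper: compare the objective of \eqref{eq:auxprob} at $(u_k,w_k)$ with its value at the feasible point $(\bar u,\bar w)$, extract weak limits, show the limit is admissible and lies in the $\rho$-ball, invoke local optimality to identify it with $(\bar u,\bar w)$, and then apply Lemma \ref{lem_aux_strong_convergence} to upgrade to strong convergence. The only deviations are cosmetic: you obtain $\|(w_k\pm u_k)_-\|_U\to 0$ from a direct lower bound on $\Phi_{\epsilon_k}(u_k,w_k)$ and pass $(u^*,w^*)\in K$ to the limit via weak lower semicontinuity of $v\mapsto\|v_-\|_U^2$, where the paper uses its limsup chain \eqref{eq:chainInequLimsup} and the weak closedness of the nonnegative cone.
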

\begin{proof}
    The sequences $(u_k)$ and $(w_k)$ are bounded in $U$ and $W$, respectively, due to the constraints in \eqref{eq:auxprob}.
    Hence, there are $u^* \in U$, $w^* \in W$ such that (after extracting a subsequence if necessary)  $u_k\rightharpoonup u^*$ in $U$, $w_k \rightharpoonup w^*$ in $W$, and $w_k \to w^*$ in $L^2(\Omega)$.
    Using Lemma \ref{tm:psieps} (iii), it holds $G_{\epsilon_k}(\Bar{w})\to G_0(\Bar{w})$, so that $\Phi_{\epsilon_k}(\bar u,\Bar{w}) \to \Phi_0(\bar u,\Bar{w})$ follows.
    From optimality of $(u_k,w_k)$ in \eqref{eq:auxprob} we get
    \begin{multline*}
        \Phi_{\epsilon_k}(\bar u,\Bar{w})\geq \Phi_{\epsilon_k}(u_k,w_k) +  \frac12 \|u_k-\bar u\|^2_{U} + \frac12 \|w_k-\Bar{w}\|_{L^2(\Omega)}^2 \\
        + \frac1{2\epsilon_k} \| (w_k-u_k)_-\|_U^2 +\frac1{2\epsilon_k} \| (w_k+u_k)_-\|_U^2 .
    \end{multline*}
    Passing to the limit superior and using the first claim of Lemma \ref{lem_aux_strong_convergence} leads to
     \begin{align}\begin{split}\label{eq:chainInequLimsup}
        \Phi_0(\bar u,\Bar{w})
        &\geq \limsup_{k \to \infty} \left( \Phi_{\epsilon_k}(u_k,w_k) + \frac12 \|u_k-\bar u\|^2_{U} + \frac12 \|w_k-\Bar{w}\|_{L^2(\Omega)}^2 \right. \\
        &\quad + \left.\frac1{2\epsilon_k} \| (w_k-u_k)_-\|_U^2 +\frac1{2\epsilon_k} \| (w_k+u_k)_-\|_U^2   \right)  \\
        &\geq  \liminf_{k \to \infty} \Phi_{\epsilon_k}(u_k,w_k) +  \liminf_{k \to \infty}\frac12 \|u_k-\bar u\|^2_{U} + \liminf_{k \to \infty} \frac12 \|w_k-\Bar{w}\|_{L^2(\Omega)}^2 \\
         &\quad
         + \limsup_{k\to\infty}\left(\frac1{2\epsilon_k} \| (w_k-u_k)_-\|_U^2 +\frac1{2\epsilon_k} \| (w_k+u_k)_-\|_U^2   \right)
         \\
        &\geq \Phi_0(u^*,w^*)+\frac12 \|u^*-\bar u\|^2_{U} + \frac12 \|w^*-\Bar{w}\|_{L^2(\Omega)}^2  \\
         &\quad
         + \limsup_{k\to\infty}\left(\frac1{2\epsilon_k} \| (w_k-u_k)_-\|_U^2 +\frac1{2\epsilon_k} \| (w_k+u_k)_-\|_U^2   \right).
     \end{split}
    \end{align}
    From these inequalities we obtain boundedness of the terms $ \frac1{2\epsilon_k} \| (w_k \pm u_k)_-\|_U^2$. Since $\epsilon_k \to 0$, it holds $ \| (w_k\pm u_k)_-\|_U \to 0$.
    Further, we have $w_k - u_k - (w_k-u_k)_- \in \{ v \in U: v\geq 0 \textrm{  a.e. on } I \times \Omega\}$.
    This set is weakly closed. Using $w_k - u_k \rightharpoonup w^*-u^*$ in $U$ and $(w_k-u_k)_- \to 0$ in $U$,\
    it follows $w^* - u^* \ge0$ almost everywhere on $I\times \Omega$.
    Similarly, one can show $w^* + u^*\geq 0$ almost everywhere on $I\times\Omega$, and therefore $(u^*,w^*)$ is admissible for problem \eqref{eq:minprob}-\eqref{eq:conduw}.
    By local optimality of $(\bar u, \Bar{w})$ we get together with the chain of inequalities \eqref{eq:chainInequLimsup}
    \[\Phi_0(\bar u,\Bar{w}) \ge \Phi_0(u^*,w^*)+\frac12 \|u^*-\bar u\|^2_{U} + \frac12 \|w^*-\Bar{w}\|_{L^2(\Omega)}^2 \geq \Phi_0(\bar u,\bar{w}),
    \]
    which implies  that $(\bar u,\Bar{w})=(u^*,w^*)$.
     The first inequality in \eqref{eq:chainInequLimsup} also gives $\Phi_0(\bar u,\Bar{w}) \ge \limsup_{k \to \infty} \Phi_{\epsilon_k}(u_k,w_k)$,
    which together with
Lemma \ref{lem_aux_strong_convergence} yields strong convergence $u_k \to \bar u$ in $U$ and $w_k \to \Bar{w}$ in $W$.
\end{proof}

Next, we consider a necessary optimality condition for problem \eqref{eq:auxprob}.
\begin{lemma}\label{tm:oc}
    For some $\epsilon>0$, let $(u_{\epsilon},w_{\epsilon})$ be a  solution of \eqref{eq:auxprob} with $\|u_{\epsilon}-\bar u\|_U < \rho$ and $\|w_{\epsilon}-\Bar{w}\|_W < \rho$.
    Then it holds
    \begin{equation}\begin{gathered}\label{eq:oc_aux}
        f'(u_{\epsilon})v+\alpha(u_{\epsilon}, v)_U + \beta (w_{\epsilon},z)_W+\gamma G_{\epsilon}'(w_{\epsilon})z + (u_{\epsilon}-\bar u, v)_U+(w_{\epsilon}-\Bar{w}, z)_{L^2(\Omega)} \\
        + \frac1{\epsilon} ( (w_{\epsilon}-u_{\epsilon})_-,\ z-v)_U + \frac1{\epsilon} (( w_{\epsilon}+u_{\epsilon})_-,\ z+v)_U = 0\ \forall (v,z)\in U\times W.
    \end{gathered} \end{equation}
\end{lemma}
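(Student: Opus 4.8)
The plan is to observe that, under the stated hypotheses, $(u_\epsilon,w_\epsilon)$ is an \emph{interior} local minimizer of the objective of \eqref{eq:auxprob}, so that the optimality condition is simply the vanishing of the Fréchet derivative of the (now effectively unconstrained) objective. Write $J$ for the objective functional in \eqref{eq:auxprob}. Since $\|u_\epsilon-\bar u\|_U<\rho$ and $\|w_\epsilon-\Bar{w}\|_W<\rho$ are strict, for every direction $(v,z)\in U\times W$ the point $(u_\epsilon+tv,\ w_\epsilon+tz)$ satisfies the ball constraints for all sufficiently small $|t|$; hence $t\mapsto J(u_\epsilon+tv,w_\epsilon+tz)$ has a local minimum at $t=0$. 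Therefore it suffices to show that $J$ is Fréchet differentiable at $(u_\epsilon,w_\epsilon)$ and to read off $J'(u_\epsilon,w_\epsilon)(v,z)=0$ for all $(v,z)$.

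Next I would differentiate $J$ term by term. The functional $f$ is continuously Fréchet differentiable by Assumption \ref{ass:f}, contributing $f'(u_\epsilon)v$. The quadratic terms $\frac{\alpha}{2}\|u\|_U^2$ and $\frac12\|u-\bar u\|_U^2$ are smooth on $U$ with derivatives $\alpha(u_\epsilon,v)_U$ and $(u_\epsilon-\bar u,v)_U$, and $\frac{\beta}{2}\|w\|_W^2$ is smooth on $W$ with derivative $\beta(w_\epsilon,z)_W$. For $\frac12\|w-\Bar w\|_{L^2(\Omega)}^2$ I use the continuous embedding $W\hookrightarrow L^2(\Omega)$ from Lemma \ref{tm:propFrac}(i), which makes this a $C^1$ functional on $W$ with derivative $(w_\epsilon-\Bar w,z)_{L^2(\Omega)}$. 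For the smoothed pseudo-norm, Lemma \ref{tm:psieps}(iv) gives that $G_\epsilon\colon L^2(\Omega)\to\R$ is Fréchet differentiable for $\epsilon>0$ with $G_\epsilon'(w)h=\int_\Omega 2w\psi_\epsilon'(w^2)h\dd x$; composing again with $W\hookrightarrow L^2(\Omega)$ yields the contribution $\gamma G_\epsilon'(w_\epsilon)z$.

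The only point requiring a small argument is the differentiability of the penalty terms $v\mapsto\frac1{2}\|v_-\|_U^2$. The scalar map $t\mapsto\frac12(\min(t,0))^2$ is continuously differentiable with derivative $t\mapsto\min(t,0)=t_-$, which is globally Lipschitz; hence the associated Nemytskii operator $v\mapsto\frac12 (v_-)^2$ is continuously Fréchet differentiable from $L^2(I\times\Omega)$ into $L^1(I\times\Omega)$ with derivative $h\mapsto v_-\,h$, and integration over $I\times\Omega$ shows $v\mapsto\frac1{2\epsilon}\|v_-\|_U^2$ is $C^1$ from $U$ to $\R$ with derivative $h\mapsto\frac1\epsilon(v_-,h)_U$. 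Since $(u,w)\mapsto w-u$ and $(u,w)\mapsto w+u$ (with $w$ extended constantly in time) are bounded linear maps $U\times W\to U$, the chain rule gives the two penalty contributions $\frac1\epsilon((w_\epsilon-u_\epsilon)_-,\ z-v)_U$ and $\frac1\epsilon((w_\epsilon+u_\epsilon)_-,\ z+v)_U$.

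Finally, summing all these derivatives and using $J'(u_\epsilon,w_\epsilon)(v,z)=0$ for every $(v,z)\in U\times W$ gives exactly \eqref{eq:oc_aux}. I expect the only genuinely delicate steps to be the Nemytskii-operator differentiability of $v\mapsto\frac12\|v_-\|_U^2$ and keeping the constant-extension convention consistent so that the $L^2(\Omega)$- and $L^2(I\times\Omega)$-inner products in the various terms are correctly matched; the remaining computations are routine Hilbert-space differentiation.
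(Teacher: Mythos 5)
Your proposal is correct and follows essentially the same route as the paper: differentiate the objective of \eqref{eq:auxprob} term by term and use the strict inequalities $\|u_{\epsilon}-\bar u\|_U<\rho$, $\|w_{\epsilon}-\Bar{w}\|_W<\rho$ to turn the first-order condition into the equation \eqref{eq:oc_aux}. The only (harmless) difference is that you establish full Fr\'echet differentiability of the penalty terms via the Nemytskii-operator argument from $L^2$ to $L^1$, whereas the paper contents itself with directional differentiability obtained by dominated convergence and first states the variational inequality over the ball before exploiting interiority.
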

Here, we used the notation
\[
 ( (w_{\epsilon} \pm u_{\epsilon})_-,\ z \pm v)_U
 := \int_{\Omega}\int_I (w_{\epsilon}(x) \pm u_{\epsilon}(t,x))_-  \cdot (z(x)\pm v(t,x)) \dd t \dd x.
\]
\begin{proof}
The map $z \mapsto (z_-)^2$ is continuously differentiable from $\R$ to $\R$.
Using dominated convergence, it is easy to verify that $v \mapsto \| v_-\|_{L^2(I\times\Omega )}^2$ is directionally
differentiable from $L^2(I\times\Omega)$ to $\R$.
In addition, $f$ and $G_{\epsilon}$ are Fréchet differentiable by Assumption \ref{ass:f} and Lemma \ref{tm:psieps}.
    Then the objective function of problem \eqref{eq:auxprob} is directionally differentiable, and a necessary optimality condition is
    \begin{multline*}
        f'(u_{\epsilon})(v-u_{\epsilon})+\alpha(u_{\epsilon}, v-u_{\epsilon})_U + \beta (w_{\epsilon},z-w_{\epsilon})_W+\gamma G_{\epsilon}'(w_{\epsilon})(z-w_{\epsilon}) \\
        + (u_{\epsilon}-\bar u, v-u_{\epsilon})_U+(w_{\epsilon}-\Bar{w}, z-w_{\epsilon})_{L^2(\Omega)}
        + \frac1{\epsilon} ( (w_{\epsilon}-u_{\epsilon})_-,\ z-w_{\epsilon}-(v-u_{\epsilon}))_U \\
        + \frac1{\epsilon} ( (w_{\epsilon}+u_{\epsilon})_-,\ z-w_{\epsilon}+v-u_{\epsilon})_U \geq 0
    \end{multline*}   for all  $(v,z)\in U\times W$ with $\|v-\bar u\|_U \leq \rho$ and $\|z-\Bar{w}\|_W \leq \rho$. Using the fact that $\|u_{\epsilon}-\bar u\|_U < \rho$ and $\|w_{\epsilon}-\Bar{w}\|_W < \rho$ one obtains the optimality condition \eqref{eq:oc_aux}.
\end{proof}
Note that for solutions $(u_k,w_k)$ corresponding to $\epsilon_k\searrow 0$, the condition $\|u_k-\bar u\|_U < \rho$ and $\|w_k-\Bar{w}\|_W < \rho$ is always satisfied for $k$ large enough as a consequence of Theorem \ref{tm:conv_uw}. \\ The next theorem shows that solutions $(u_{\epsilon}, w_{\epsilon})$ to the penalized and unconstrained optimization problem still satisfy a necessary condition for admissibility.

\begin{lemma}\label{tm:wepsgeq0}
    Let $(u_{\epsilon},w_{\epsilon})$ be a solution to problem \eqref{eq:auxprob}. Then it holds $w_{\epsilon}\geq 0$.
\end{lemma}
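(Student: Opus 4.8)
The plan is to show that replacing $w_\epsilon$ by $(w_\epsilon)_+$ (its positive part) does not increase the objective of \eqref{eq:auxprob}, while keeping feasibility, so by uniqueness considerations or by strict improvement on the negative set we must have $w_\epsilon = (w_\epsilon)_+ \ge 0$. Concretely, set $\tilde w := (w_\epsilon)_+$ and $\tilde u := u_\epsilon$, and compare the value of the functional at $(\tilde u, \tilde w)$ with its value at $(u_\epsilon, w_\epsilon)$ term by term. First I would check feasibility: $\|\tilde w - \bar w\|_W \le \|w_\epsilon - \bar w\|_W \le \rho$ follows from $\bar w \ge 0$ (established for the reference solution — actually this needs care, see below) together with the pointwise inequality $|(w_\epsilon)_+(x) - \bar w(x)| \le |w_\epsilon(x) - \bar w(x)|$ for the $L^2$-part, plus the analogous Gagliardo/spectral double-integral estimate; and $\|\tilde u - \bar u\|_U = \|u_\epsilon - \bar u\|_U \le \rho$ is unchanged.

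Next I would bound each term of the objective. The terms $f(u_\epsilon)$, $\frac{\alpha}{2}\|u_\epsilon\|_U^2$ and $\frac12\|u_\epsilon - \bar u\|_U^2$ are untouched since $u$ does not change. For $\frac{\beta}{2}\|\tilde w\|_W^2$: since $W$ splits as an $L^2$-part plus a nonnegative (possibly spectral) double integral, the $L^2$-part clearly satisfies $\|(w_\epsilon)_+\|_{L^2}^2 \le \|w_\epsilon\|_{L^2}^2$, and for the double-integral part one uses $|(w_\epsilon)_+(x) - (w_\epsilon)_+(y)| \le |w_\epsilon(x) - w_\epsilon(y)|$ pointwise — this is exactly the kind of truncation estimate underlying Lemma \ref{lm:inequPosPart} — so $\|\tilde w\|_W \le \|w_\epsilon\|_W$. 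For $G_\epsilon(\tilde w)$: since $\psi_\epsilon$ depends only on $|w|^2$ and $|(w_\epsilon)_+(x)| \le |w_\epsilon(x)|$ pointwise, monotonicity of $\psi_\epsilon$ gives $G_\epsilon((w_\epsilon)_+) \le G_\epsilon(w_\epsilon)$. For $\frac12\|\tilde w - \bar w\|_{L^2(\Omega)}^2$: again $|(w_\epsilon)_+ - \bar w| \le |w_\epsilon - \bar w|$ pointwise (using $\bar w \ge 0$), so this term does not increase. Finally, the penalization: on the set where $w_\epsilon \ge 0$ nothing changes; on $\{w_\epsilon < 0\}$ one has $(w_\epsilon)_+ = 0 \ge w_\epsilon$, which can only make $(w_\epsilon \mp u_\epsilon)$ larger, hence $(\,\cdot\,)_-$ smaller in absolute value, so $\|((w_\epsilon)_+ - u_\epsilon)_-\|_U^2 \le \|(w_\epsilon - u_\epsilon)_-\|_U^2$ and likewise for the $+u_\epsilon$ term — here one checks $t \mapsto ((t \mp c)_-)^2$ is nondecreasing as $t$ decreases, so projecting $w_\epsilon$ up to $0$ decreases the penalty.

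Putting these together, $(\tilde u, \tilde w)$ is feasible and the objective at $(\tilde u, \tilde w)$ is $\le$ the objective at $(u_\epsilon, w_\epsilon)$; by optimality of $(u_\epsilon, w_\epsilon)$ equality holds throughout. In particular the term $\frac12\|w_\epsilon - \bar w\|_{L^2(\Omega)}^2 = \frac12\|(w_\epsilon)_+ - \bar w\|_{L^2(\Omega)}^2$, combined with $\bar w \ge 0$, forces $w_\epsilon = (w_\epsilon)_+$ a.e., i.e. $w_\epsilon \ge 0$. (Alternatively, strict inequality in the $L^2$-proximal term on any set of positive measure where $w_\epsilon < 0$ already contradicts optimality directly, which is the cleanest way to close the argument.)

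The main obstacle I anticipate is establishing $\bar w \ge 0$ for the reference local solution, which is used above for feasibility and for the $L^2$-proximal comparison. This should itself follow from the original problem \eqref{eq:minprob}–\eqref{eq:conduw}: replacing any admissible $w$ by $|w|$ keeps $(u,|w|)$ admissible (since $|u| \le w \le |w|$), leaves $\|w\|_p^p$ unchanged, does not increase $\|w\|_W$ by the truncation/reflection estimate, and does not change $f(u)$ or $\|u\|_U$; hence one may always assume $\bar w = |\bar w| \ge 0$. I would state this as a preliminary observation (or cite it as implicit in the setup) before running the comparison argument for $(u_\epsilon, w_\epsilon)$. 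A secondary technical point is verifying the double-integral truncation inequality for all four admissible choices of $W$ in \eqref{eq_def_W}, but this is precisely the content already recorded in the proof of Lemma \ref{lm:inequPosPart} and can be invoked directly.
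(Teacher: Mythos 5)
Your overall strategy (compare the objective at the truncated competitor $(u_\epsilon,(w_\epsilon)_+)$ with its value at $(u_\epsilon,w_\epsilon)$) is different from the paper's, which instead tests the first-order optimality condition \eqref{eq:oc_aux} with the direction $(0,(w_\epsilon)_-)$ and shows that every summand of the resulting identity is non-negative (using Lemma \ref{lm:inequPosPart} for the $W$-inner-product term), so that all of them vanish and $(w_\epsilon)_-=0$. Most of your term-by-term estimates are sound: $\|(w_\epsilon)_+\|_W\le\|w_\epsilon\|_W$ via the pointwise contraction $|w_+(x)-w_+(y)|\le|w(x)-w(y)|$, the monotonicity of $\psi_\epsilon$ for $G_\epsilon$, the $L^2$-proximal term, and the penalty terms. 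Also, your anticipated ``main obstacle'' $\bar w\ge 0$ is a non-issue: it follows immediately from feasibility $(\bar u,\bar w)\in K$, since $\bar w(x)\ge|\bar u(t,x)|\ge 0$ a.e.; no reflection argument is needed (the paper uses $\bar w\ge0$ in exactly this way).

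The genuine gap is the feasibility of your competitor with respect to the ball constraint $\|w-\bar w\|_W\le\rho$ in \eqref{eq:auxprob}. You claim $\|(w_\epsilon)_+-\bar w\|_W\le\|w_\epsilon-\bar w\|_W$ via a pointwise estimate on the Gagliardo integrand, but the required inequality $|((w_\epsilon)_+-\bar w)(x)-((w_\epsilon)_+-\bar w)(y)|\le|(w_\epsilon-\bar w)(x)-(w_\epsilon-\bar w)(y)|$ is false: with $w_\epsilon(x)=-1$, $w_\epsilon(y)=-2$, $\bar w(x)=1$, $\bar w(y)=0$ the left side is $1$ and the right side is $0$. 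Truncation is $1$-Lipschitz pointwise, hence nonexpansive for the $L^2$-distance, but it is not the $W$-orthogonal projection onto the nonnegative cone and is not nonexpansive for the $W$-distance. A concrete counterexample to the norm inequality itself: take $\bar w$ oscillating between $0$ and $C$ with very large $H^s$-seminorm and $w=\bar w-C\le 0$; then $w-\bar w\equiv-C$ has seminorm zero while $w_+-\bar w=-\bar w$ has arbitrarily large $W$-norm. So your competitor may leave the admissible ball and cannot be used against optimality. The standard repair is to pass from the full truncation to the direction $(w_\epsilon)_+-w_\epsilon=-(w_\epsilon)_-$ and take a one-sided derivative of the objective at $t=0$ along $w_\epsilon+t\,(-(w_\epsilon)_-)$, which stays feasible for small $t$ when $\|w_\epsilon-\bar w\|_W<\rho$ --- but that is precisely the paper's argument of testing \eqref{eq:oc_aux} with $(0,(w_\epsilon)_-)$.
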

\begin{proof}
Define $w_{\epsilon}^- := (w_{\epsilon})_-$.
Then we get the following inequalities
\[
 G_{\epsilon}'(w_{\epsilon})w_{\epsilon}^- \ge0, \quad
 (w_{\epsilon} , w_{\epsilon}^-)_{L^2(\Omega)} \ge 0, \quad
 ( - \Bar{w}, w_{\epsilon}^-)_{L^2(\Omega)}\ge 0,
\]
 where we used the expression of $G_{\epsilon}'$ from Lemma \ref{tm:psieps} and $\bar w \ge0$.
Furthermore, by Lemma \ref{lm:inequPosPart} we have
\[
 (w_{\epsilon},w_{\epsilon}^-)_W  = (w_{\epsilon},w_{\epsilon} )_W  - (w_{\epsilon},w_{\epsilon}^+ )_W \ge0.
\]
In addition, we get $((w_{\epsilon} \pm u_{\epsilon})_-, w_{\epsilon}^-)_U  \ge 0$.
Now, we test \eqref{eq:oc_aux} with $(0,w_{\epsilon}^-)$ to obtain
\begin{multline*}
    \beta (w_{\epsilon}, w_{\epsilon}^-)_W + \gamma G_{\epsilon}'(w_{\epsilon})w_{\epsilon}^- + (w_{\epsilon}, w_{\epsilon}^-)_{L^2(\Omega)} -(\Bar{w}, w_{\epsilon}^-)_{L^2(\Omega)}
    \\
    + \frac1{\epsilon}( (w_{\epsilon}-u_{\epsilon})_- + (w_{\epsilon}+u_{\epsilon})_- , w_{\epsilon}^-)_U = 0.
\end{multline*}
Since by the above considerations every summand in this equation is non-negative,
it follows that all summands are zero.
In particular, this implies
$ 0 = (w_{\epsilon}, w_{\epsilon}^-)_{L^2(\Omega)} = (w_{\epsilon}^-, w_{\epsilon}^-)_{L^2(\Omega)}$,
so that $w_{\epsilon}^-= 0$ and $w_{\epsilon} \ge 0$ almost everywhere on $\Omega$.
\end{proof}

\begin{lemma} \label{tm:mult_bounded}
    Let $(u_k,w_k)$ be the sequence of solutions to the smoothed problem \eqref{eq:auxprob} corresponding to a sequence $(\epsilon_k) \searrow 0$ and let $\lambda_k := G_{\epsilon_k}'(w_k)$, $\mu_k^1:= \frac1{\epsilon_k} (w_k-u_k)_-$ and
    $\mu_k^2:= \frac1{\epsilon_k}(w_k +u_k)_-$.
    Then there are $\Bar{\mu}^1$ and $\Bar{\mu}^2$ in $U$
    such that for a subsequence $(k_n)$ it holds $\mu_{k_n}^1 \rightharpoonup \bar{\mu}^1$ and $\mu_{k_n}^2 \rightharpoonup \bar{\mu}^2$ in $U$. Furthermore, it holds $\lambda_{k_n} \to \Bar{\lambda}  $ in $W^*$ for some $\Bar{\lambda}$ in $W^*$.
\end{lemma}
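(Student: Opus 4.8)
The plan is to extract the boundedness of the multipliers $\mu_k^1,\mu_k^2$ directly from the chain of inequalities \eqref{eq:chainInequLimsup} established in the proof of Theorem \ref{tm:conv_uw}, and then to deduce the convergence of $\lambda_k$ in $W^*$ from the optimality condition \eqref{eq:oc_aux} together with the bounds just obtained. First I would recall that Theorem \ref{tm:conv_uw} gives $u_k \to \bar u$ in $U$ and $w_k \to \bar w$ in $W$, and that its proof already shows the terms $\frac{1}{2\epsilon_k}\|(w_k \pm u_k)_-\|_U^2$ are bounded. From $\frac1{\epsilon_k}\|(w_k-u_k)_-\|_U^2 \le C$ I would write
\[
 \|\mu_k^1\|_U^2 = \frac1{\epsilon_k^2}\|(w_k-u_k)_-\|_U^2 = \frac1{\epsilon_k}\cdot\frac1{\epsilon_k}\|(w_k-u_k)_-\|_U^2 \le \frac{C}{\epsilon_k},
\]
which is \emph{not} enough on its own; so instead I would argue that $\|(w_k-u_k)_-\|_U \to 0$ (also shown in the proof of Theorem \ref{tm:conv_uw}) is the wrong quantity and that the correct bound must come from testing \eqref{eq:oc_aux}. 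Concretely, test \eqref{eq:oc_aux} with $(v,z) = (\mu_k^1 - \mu_k^2,\, 0)$ — note $\mu_k^1-\mu_k^2 \in U$ is admissible — to isolate $\|\mu_k^1\|_U^2 + \|\mu_k^2\|_U^2$ against terms that are already known to be bounded: $f'(u_k)$ is bounded since $f\in C^1$ and $u_k\to\bar u$; $\alpha(u_k,\cdot)_U$ and $(u_k-\bar u,\cdot)_U$ are bounded in $U^*$; and $G_{\epsilon_k}'(w_k)$ has the pointwise bound $|2w_k\psi_{\epsilon_k}'(w_k^2)| \le \tfrac{p}{2}\cdot 2|w_k|\cdot |w_k|^{p-2} = p|w_k|^{p-1}$ from Lemma \ref{tm:psieps}(iv), which together with the continuous embedding $W\hookrightarrow L^q(\Omega)$ from Lemma \ref{tm:propFrac}(iv) (choosing the Hölder conjugate appropriately, using $w_k \ge 0$ from Lemma \ref{tm:wepsgeq0} and boundedness of $\|w_k\|_W$) yields $\|\lambda_k\|_{W^*} \le C$. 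With $(\mu_k^1)$, $(\mu_k^2)$ bounded in the Hilbert space $U$ and $(\lambda_k)$ bounded in $W^*$, weak(-*) compactness gives a subsequence with $\mu_{k_n}^1 \rightharpoonup \bar\mu^1$, $\mu_{k_n}^2 \rightharpoonup \bar\mu^2$ in $U$ and $\lambda_{k_n} \rightharpoonup^* \bar\lambda$ in $W^*$.

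For the \emph{strong} convergence $\lambda_{k_n}\to\bar\lambda$ in $W^*$ claimed in the statement, I would not rely on weak-* compactness but instead compute the limit explicitly: once $u_{k_n}\to\bar u$ strongly in $U$ and $w_{k_n}\to\bar w$ strongly in $W$ (hence in every $L^q(\Omega)$ covered by Lemma \ref{tm:propFrac}(iv)), I would show $2w_{k_n}\psi_{\epsilon_{k_n}}'(w_{k_n}^2) \to \bar\lambda$ strongly in the relevant $L^{q'}(\Omega)\hookrightarrow W^*$ by a dominated-convergence argument: on the set $\{\bar w > 0\}$ the integrand converges pointwise to $2\bar w\,|\bar w|^{p-2}\bar w = p\bar w^{p-1}$ (since $\epsilon_{k_n}\to 0$ forces $\psi_{\epsilon_{k_n}}'(w_{k_n}^2)\to \tfrac p2 \bar w^{p-2}$ there), and on $\{\bar w = 0\}$ one uses $|2w_{k_n}\psi_{\epsilon_{k_n}}'(w_{k_n}^2)| \le p|w_{k_n}|^{p-1}$ together with the fact that $w_{k_n}\to 0$ a.e. there — though $|w_{k_n}|^{p-1}$ blows up as $w_{k_n}\to 0$, which is the delicate point. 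This is where the $L^p$-theory of \cite[Lemma 5.1]{lp_cont} or an equi-integrability argument (using $\|w_{k_n}\|_W$ bounded and the embedding into $L^q$ with $q>2$ large) is needed; alternatively, since the statement only claims $\lambda_{k_n}\to\bar\lambda$ in $W^*$, it may suffice to combine the weak-* limit $\lambda_{k_n}\rightharpoonup^*\bar\lambda$ with a norm-convergence $\|\lambda_{k_n}\|_{W^*}\to\|\bar\lambda\|_{W^*}$ extracted from the optimality condition by testing with $w_{k_n}-\bar w\to 0$; in a reflexive space weak-* plus norm convergence gives strong convergence.

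The main obstacle I anticipate is the simultaneous boundedness of $(\mu_k^1)$, $(\mu_k^2)$ in $U$ and $(\lambda_k)$ in $W^*$: these are coupled through the single equation \eqref{eq:oc_aux}, and a naive bound on the penalty terms only gives $\|\mu_k^i\|_U \lesssim \epsilon_k^{-1/2}$, which diverges. The resolution — and the reason the authors added the extra penalization $\tfrac12\|u-\bar u\|_U^2 + \tfrac12\|w-\bar w\|_{L^2}^2$, as they remark before Section \ref{sec:oc} — is precisely that testing \eqref{eq:oc_aux} with carefully chosen directions built from $\mu_k^1,\mu_k^2,w_k^-$ produces a self-improving inequality of the form $\|\mu_k^1\|_U^2 + \|\mu_k^2\|_U^2 \le C(1 + \|\mu_k^1\|_U + \|\mu_k^2\|_U + \|\lambda_k\|_{W^*})$ and, in parallel, a bound $\|\lambda_k\|_{W^*} \le C(1 + \|\mu_k^1\|_U + \|\mu_k^2\|_U)$; combined via Young's inequality these close up to give uniform bounds. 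Getting the test functions and the sign conditions right (using $w_k\ge 0$ from Lemma \ref{tm:wepsgeq0}, the monotonicity Lemma \ref{lm:inequPosPart}, and $\bar w\ge 0$) is the technical heart of the argument, and I expect the details to mirror the proof of Lemma \ref{tm:wepsgeq0} but with the quantitative bookkeeping carried through rather than discarded.
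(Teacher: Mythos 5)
Your treatment of the multipliers $\mu_k^1,\mu_k^2$ is essentially the paper's argument: the paper tests \eqref{eq:oc_aux} with $v=-(w_k-u_k)_-$, $z=0$ (and then analogously with $v=-(w_k+u_k)_-$), uses $w_k\ge 0$ from Lemma \ref{tm:wepsgeq0} to make the cross term $((w_k+u_k)_-,(w_k-u_k)_-)_U$ vanish, and after Cauchy--Schwarz divides by $\|(w_k-u_k)_-\|_U$ to get $\|\mu_k^1\|_U\le \|f'(u_k)\|_U+\alpha\|u_k\|_U+\|u_k-\bar u\|_U$. Your single test direction $(\mu_k^1-\mu_k^2,\,0)$ does the same thing in one stroke, since $(\mu_k^1,\mu_k^2)_U=0$ by the disjointness of the supports (again from $w_k\ge0$); moreover, because $z=0$ the term $\gamma G'_{\epsilon_k}(w_k)z$ drops out entirely, so the coupling with $\lambda_k$ that worries you in your last paragraph is not actually present, and the ``self-improving inequality'' is simply $\|\mu_k^1\|_U^2+\|\mu_k^2\|_U^2\le C\left(\|\mu_k^1\|_U^2+\|\mu_k^2\|_U^2\right)^{1/2}$. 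This part is sound.

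The genuine gap is the strong convergence $\lambda_{k_n}\to\bar\lambda$ in $W^*$. Your first route --- the pointwise bound $|2w_k\psi'_{\epsilon_k}(w_k^2)|\le p|w_k|^{p-1}$ followed by H\"older or dominated convergence --- does not work: since $p-1<0$, the majorant $|w_k|^{p-1}$ blows up wherever $w_k$ is small, equals $+\infty$ on $\{w_k=0\}$, and is not in any $L^{q'}(\Omega)$ controlled by $\|w_k\|_W$; you flag this as ``the delicate point'' but do not resolve it. Your fallback (weak-* convergence plus convergence of norms) is not carried out either: testing with $w_{k_n}-\bar w$ produces $\langle\lambda_{k_n},w_{k_n}-\bar w\rangle_W$, not $\|\lambda_{k_n}\|_{W^*}$. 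The paper sidesteps all of this: setting $v=0$ in \eqref{eq:oc_aux} gives $\gamma\langle\lambda_k,z\rangle_W = -\beta(w_k,z)_W-(w_k-\bar w,z)_{L^2(\Omega)}-(\mu_k^1,z)_U-(\mu_k^2,z)_U$, and every term on the right converges strongly in $W^*$: the first two because $w_k\to\bar w$ in $W$ by Theorem \ref{tm:conv_uw}, the last two because the functionals $z\mapsto(\mu_k^i,z)_U$ are represented by $x\mapsto\int_I\mu_k^i(t,x)\dd t$, which converge weakly in $L^2(\Omega)$ and hence strongly in $W^*$ by compactness of the embedding $W\hookrightarrow L^2(\Omega)$. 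No pointwise analysis of $\lambda_k$ is needed; you should replace your dominated-convergence argument by this observation.
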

\begin{proof}
   By testing equation \eqref{eq:oc_aux} with $v := - (w_k - u_k)_- $ and $z=0$, we get \begin{multline*}
       - f'(u_k) (w_k - u_k)_-+ \alpha ( u_k, -(w_k - u_k)_-)_U + (u_k-\bar u, -(w_k - u_k)_-)_U \\+ \frac1{\epsilon_k} \| (w_k - u_k)_- \|_U^2  + \frac1{\epsilon_k} ( (w_k + u_k)_- ,  - (w_k - u_k)_- )_U=0.
    \end{multline*} With Lemma \ref{tm:wepsgeq0} the last term on the left-hand side vanishes.

    Rearranging and using Cauchy-Schwarz inequality leads to
    \[
        \frac1{\epsilon_k} \| (w_k - u_k)_- \|_U^2   \leq \left( \|f'(u_k)\|_{U} + \alpha\|u_k\|_U + \|u_k-\bar u\|_U  \right)  \| (w_k - u_k)_- \|_U ,
    \]
    which implies
    \[
    \frac1{\epsilon_k} \| (w_k - u_k)_- \|_U\leq  \left( \|f'(u_k)\|_{U} + \alpha\|u_k\|_U + \|u_k-\bar u\|_U  \right).
    \]

    Since $(f'(u_k))$ and $(u_k)$ are bounded sequences in $U$, we obtain  boundedness of $\frac1{\epsilon_k} \| (w_k - u_k)_- \|_U$.

    Analogously, one obtains boundedness of $\frac1{\epsilon_k} \| (w_k + u_k)_- \|_U$. Therefore, weak convergence $\mu_k^1 \rightharpoonup \bar{\mu}^1$ and $\mu_k^2 \rightharpoonup \bar{\mu}^2$ in $U$ along a subsequence $(k_n)$ for some weak limit points $\bar{\mu}^1,\ \bar{\mu}^2 \in U$ follows.

    Using the optimality condition \eqref{eq:oc_aux} with $v\equiv 0$, we get
    \[
      \gamma \braket{\lambda_k,z } = -\beta (w_k,z)_W - (w_k-\Bar{w}, z)_{L^2(\Omega)} - (\mu_k^1,z)_U -  (\mu_k^2, z)_U\quad \forall z\in W,
    \]
    where $(\mu_k^i,z)_U = \int_\Omega \int_I \mu_k^i(t,x) \dd t\ z(x) \dd x$ for $i=1,2$.
    Due to the compact embedding of $U$ into $W ^*$, all terms on the right-hand side converge in $W^*$ along the subsequence $(k_n)$,
    and we obtain $\lambda_{k_n} \to \Bar{\lambda}$ in $W^*$.
\end{proof}
The previous Lemma \ref{tm:mult_bounded} explains the use of the penalization of the constraint $(u,w)\in K$ in the auxiliary problem. By first showing boundedness and weak convergence of the sequences $(\mu_k^1), (\mu_k^2)$, one can quite directly deduce boundedness and convergence of $(\lambda_k)$. Without the penalization, it would be necessary to somehow separate $(\lambda_k)$ and the multipliers corresponding to the constraint. 

Concerning the properties of the limit $\bar \lambda$, we have the following result.

\begin{lemma}
\label{tm:theory_lambda_conv}
    Let $\epsilon_k\searrow 0$, and let $(u_k,w_k)$ be a sequence of solutions to \eqref{eq:auxprob} and
    $\lambda_k$ the sequence of multipliers as defined above. Suppose $w_k\to \Bar{w}$ in $W$ and $\lambda_k \to \Bar{\lambda}$ in $W^*$. Then \begin{align*}
        \braket{\lambda_k,w_k}_W \to \braket{\Bar{\lambda},\Bar{w}}_W = p \int_{\Omega}|\Bar{w}|^p \dd x.
    \end{align*}
\end{lemma}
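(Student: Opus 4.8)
The claim splits into two parts: convergence of the pairing $\braket{\lambda_k,w_k}_W\to\braket{\bar\lambda,\bar w}_W$, and the identification $\braket{\bar\lambda,\bar w}_W = p\int_\Omega|\bar w|^p\dd x$. The first part is the subtle one: even though $\lambda_k\to\bar\lambda$ strongly in $W^*$ and $w_k\to\bar w$ strongly in $W$, the pairing $\braket{\lambda_k,w_k}_W$ is not literally a $W^*$--$W$ duality pairing of two strongly convergent sequences unless the convergences are compatible; here however both convergences \emph{are} strong, so $\braket{\lambda_k,w_k}_W\to\braket{\bar\lambda,\bar w}_W$ follows immediately from $|\braket{\lambda_k,w_k}_W-\braket{\bar\lambda,\bar w}_W|\le \|\lambda_k-\bar\lambda\|_{W^*}\|w_k\|_W + \|\bar\lambda\|_{W^*}\|w_k-\bar w\|_W$ and boundedness of $(w_k)$. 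So the real work is in computing the limit of $\braket{\lambda_k,w_k}_W$ directly from the definition $\lambda_k = G_{\epsilon_k}'(w_k)$.

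First I would write out, using Lemma~\ref{tm:psieps}(iv),
\[
\braket{\lambda_k,w_k}_W = G_{\epsilon_k}'(w_k)w_k = \int_\Omega 2 w_k(x)^2\,\psi_{\epsilon_k}'(w_k(x)^2)\dd x,
\]
and observe that $2t\psi_\epsilon'(t) = p\,t\min(\epsilon^{p-2},t^{(p-2)/2}) = p\min(t\epsilon^{p-2}, t^{p/2})$, which is a truncated version of $p\,t^{p/2}$. The plan is then to pass to the limit in this integral. Since $w_k\to\bar w$ in $W$, by the compact embedding (Lemma~\ref{tm:propFrac}(i)) and $W\hookrightarrow L^q$ for some $q>2$, we have $w_k\to\bar w$ in $L^2(\Omega)$, hence $w_k^2\to\bar w^2$ in $L^1(\Omega)$, and along a subsequence $w_k(x)^2\to\bar w(x)^2$ pointwise a.e. with an $L^1$-dominating function. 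For the integrand: where $\bar w(x)\ne 0$ we have $w_k(x)^2\ge\epsilon_k^2$ eventually, so the integrand equals $p\,|w_k(x)|^p\to p\,|\bar w(x)|^p$; where $\bar w(x)=0$ the integrand is bounded by $p\,w_k(x)^p$ which tends to $0$. Domination: $0\le 2t\psi_\epsilon'(t)\le p\,t^{p/2}$ and $|w_k|^p\to|\bar w|^p$ in $L^1$ (this is exactly the $p\searrow 0$-type convergence recorded via \cite[Lemma 5.1]{lp_cont}, or can be obtained here from $L^2$-convergence and a generalized dominated convergence / Vitali argument), so dominated convergence gives $\braket{\lambda_k,w_k}_W\to p\int_\Omega|\bar w|^p\dd x$. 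Because the limit is the same along every subsequence, the whole sequence converges. Combining with the strong-convergence estimate above then forces $\braket{\bar\lambda,\bar w}_W = p\int_\Omega|\bar w|^p\dd x$.

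The main obstacle is the passage to the limit in $\int_\Omega 2w_k^2\psi_{\epsilon_k}'(w_k^2)\dd x$: the integrand is a product where $\psi_{\epsilon_k}'$ blows up like $\epsilon_k^{p-2}\to\infty$ on the set where $w_k$ is small, so one must argue carefully that this blow-up is harmless because it is multiplied by the correspondingly small factor $w_k^2$, yielding the uniform bound $2w_k^2\psi_{\epsilon_k}'(w_k^2)\le p\,|w_k|^p$. Establishing an integrable dominating function independent of $k$ — i.e., controlling $|w_k|^p$ in $L^1(\Omega)$ uniformly, for instance via the $L^2\hookrightarrow L^p$ bound on $\Omega$ bounded and the convergence $w_k\to\bar w$ in $L^2$ — together with the pointwise identification of the limit on $\{\bar w\ne 0\}$ versus $\{\bar w=0\}$, is the crux; everything else is routine.
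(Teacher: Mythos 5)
Your proposal is correct and follows essentially the same route as the paper, which simply defers to \cite[Lemma 5.5]{sparse} on the grounds that the argument only uses the definition of $\lambda_k$ via the smoothing scheme and the convergence $w_k\to\bar w$ in $W\hookrightarrow L^2(\Omega)$ --- exactly the two ingredients you exploit. Your write-out (the bound $2t\psi_\epsilon'(t)\le p\,t^{p/2}$, pointwise identification on $\{\bar w\ne0\}$ versus $\{\bar w=0\}$, a Pratt/Vitali passage to the limit using $\int_\Omega|w_k|^p\dd x\to\int_\Omega|\bar w|^p\dd x$, and the subsequence principle) correctly supplies the details the paper outsources.
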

\begin{proof}
	The proof of \cite[Lemma 5.5]{sparse} can be transferred to this setting here,
	as it only uses the definition of $\lambda_k$ via the smoothing scheme and the convergence $w_k\to \Bar{w}$ in $W \hookrightarrow L^2(\Omega)$.
\end{proof}

Combining the results above, we obtain an optimality condition for the original problem \eqref{eq:minprob}.
Here, we will obtain Lagrange multipliers to both constraints encoded in the set $K$, i.e., to the constraints $w - u \ge0$ and $w+u \ge0$.
Define
\begin{equation} \label{eq_def_C}
 C:= \{ v \in U: \ v \ge 0 \}, \quad I_C(v):=\begin{cases} 0& \text{ if } v\in C, \\ +\infty &\text{ otherwise.}\end{cases}
\end{equation}
Then $\mu \in U=U^*$ belongs to the convex subdifferential of $I_C$  at $\bar v\in C$ if and only if $( \mu, v - \bar v)_U \le 0$ for all $v\in C$, see \cite[Example 2.31]{convex_opt}.

\begin{theorem}
\label{thm_nec_opt_cond}
    Let $(\bar u,\Bar{w})$ be a local solution of the original problem \eqref{eq:minprob}. Then there are $\Bar{\lambda}\in W^*$ and $\Bar{\mu}^1, \Bar{\mu}^2 \in U $
    such that \begin{align}\label{eq:oc_lim}
        f'(\bar u)v+\alpha(\bar u,v)_U+\beta(\Bar{w},z)_W+
        \gamma \braket{\Bar{\lambda},z}_{W} + (\bar{\mu}^1, z-v)_U + (\bar{\mu}^2, z+v)_U = 0
    \end{align}
   for all $ (v,z)\in U\times W$ and it holds\begin{align}\label{eq:oc_lambda}
     \braket{\Bar{\lambda},\Bar{w}}_W = p\int_{\Omega} |\Bar{w}|^p \dd x.
    \end{align}
    Moreover, $\bar{\mu}^1 \in \partial I_C(\bar w - \bar u) $ and $\bar{\mu}^2 \in \partial I_C(\bar w + \bar u)$.
\end{theorem}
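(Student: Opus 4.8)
The plan is to pass to the limit $\epsilon_k \searrow 0$ in the optimality condition \eqref{eq:oc_aux} of the auxiliary problem \eqref{eq:auxprob}, using the compactness results already established. First I would take a sequence $\epsilon_k \searrow 0$ and let $(u_k,w_k)$ be the corresponding solutions of \eqref{eq:auxprob}. By Theorem \ref{tm:conv_uw} we have $u_k \to \bar u$ in $U$ and $w_k \to \bar w$ in $W$, and in particular the strict-interior conditions $\|u_k-\bar u\|_U < \rho$, $\|w_k-\bar w\|_W < \rho$ hold for $k$ large, so Lemma \ref{tm:oc} applies and $(u_k,w_k)$ satisfies \eqref{eq:oc_aux}. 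Set $\lambda_k := G'_{\epsilon_k}(w_k)$, $\mu_k^1 := \frac1{\epsilon_k}(w_k-u_k)_-$, $\mu_k^2 := \frac1{\epsilon_k}(w_k+u_k)_-$; by Lemma \ref{tm:mult_bounded} we may pass to a subsequence so that $\mu_k^1 \rightharpoonup \bar\mu^1$, $\mu_k^2 \rightharpoonup \bar\mu^2$ in $U$ and $\lambda_k \to \bar\lambda$ in $W^*$.

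Next I would pass to the limit termwise in \eqref{eq:oc_aux} written with $\lambda_k,\mu_k^1,\mu_k^2$, for each fixed test pair $(v,z) \in U\times W$. Since $f$ is continuously Fréchet differentiable and $u_k \to \bar u$ in $U$, we get $f'(u_k)v \to f'(\bar u)v$ and $\alpha(u_k,v)_U \to \alpha(\bar u,v)_U$; since $w_k \to \bar w$ in $W$, $\beta(w_k,z)_W \to \beta(\bar w,z)_W$; the extra penalty-free terms $(u_k-\bar u,v)_U$ and $(w_k-\bar w,z)_{L^2(\Omega)}$ vanish in the limit because of the strong convergences; $\gamma G'_{\epsilon_k}(w_k)z = \gamma\langle\lambda_k,z\rangle_W \to \gamma\langle\bar\lambda,z\rangle_W$ by $\lambda_k\to\bar\lambda$ in $W^*$; and the penalty terms $\frac1{\epsilon_k}((w_k-u_k)_-,z-v)_U = (\mu_k^1,z-v)_U \to (\bar\mu^1,z-v)_U$ and likewise $(\mu_k^2,z+v)_U \to (\bar\mu^2,z+v)_U$ by the weak convergence of the multipliers (here $z-v, z+v \in U$ via the constant extension of $z$). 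This yields \eqref{eq:oc_lim}. The relation \eqref{eq:oc_lambda} follows directly from Lemma \ref{tm:theory_lambda_conv}, whose hypotheses $w_k\to\bar w$ in $W$ and $\lambda_k\to\bar\lambda$ in $W^*$ are exactly what we have.

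It remains to identify the limits $\bar\mu^1,\bar\mu^2$ as elements of the convex subdifferentials $\partial I_C(\bar w-\bar u)$ and $\partial I_C(\bar w+\bar u)$. From Theorem \ref{tm:conv_uw} (the estimates in \eqref{eq:chainInequLimsup}) we know $\|(w_k\mp u_k)_-\|_U \to 0$, hence $\bar w - \bar u \ge 0$ and $\bar w + \bar u \ge 0$ a.e., i.e. $\bar w \mp \bar u \in C$; thus $I_C(\bar w-\bar u) = I_C(\bar w+\bar u) = 0$. By the characterization recalled before the theorem, it suffices to show $(\bar\mu^1, v - (\bar w - \bar u))_U \le 0$ for all $v \in C$, and analogously for $\bar\mu^2$. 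The key observation is the pointwise identity $((w_k-u_k)_-)\cdot(w_k-u_k) = -((w_k-u_k)_-)^2 \le 0$ and, for any $v\ge 0$, $((w_k-u_k)_-)\cdot v \ge 0$, so that $(\mu_k^1, v - (w_k-u_k))_U = (\mu_k^1,v)_U + \frac1{\epsilon_k}\|(w_k-u_k)_-\|_U^2 \ge 0$; wait — this has the wrong sign, so instead I use $(\mu_k^1, (w_k-u_k) - v)_U = -\frac1{\epsilon_k}\|(w_k-u_k)_-\|_U^2 - (\mu_k^1,v)_U \le 0$ for all $v \in C$. Passing to the limit, using $\mu_k^1 \rightharpoonup \bar\mu^1$ weakly in $U$ and $w_k - u_k \to \bar w - \bar u$ strongly in $U$, gives $(\bar\mu^1, (\bar w - \bar u) - v)_U \le 0$ for all $v\in C$, which is exactly $\bar\mu^1 \in \partial I_C(\bar w - \bar u)$; the argument for $\bar\mu^2 \in \partial I_C(\bar w + \bar u)$ is identical with $w_k+u_k$ in place of $w_k-u_k$. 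The main obstacle is this last step: one must be careful about which convergences are strong versus weak (the product $(\mu_k^1, w_k-u_k)_U$ converges because one factor is strongly convergent) and about the sign conventions in the subdifferential characterization, so that the variational inequality comes out in the correct direction.
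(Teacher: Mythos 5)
Your overall route is the same as the paper's: pass to the limit in \eqref{eq:oc_aux} using Theorem \ref{tm:conv_uw} (strong convergence, so Lemma \ref{tm:oc} applies for large $k$), Lemma \ref{tm:mult_bounded} (weak convergence of $\mu_k^1,\mu_k^2$ and strong convergence of $\lambda_k$), and Lemma \ref{tm:theory_lambda_conv} for \eqref{eq:oc_lambda}. That part is fine and matches the paper essentially line by line.

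There is, however, a genuine sign error in the final subdifferential step. The paper defines $z_-:=\min(z,0)\le 0$, so $\mu_k^1=\tfrac1{\epsilon_k}(w_k-u_k)_-\le 0$, the pointwise identity is $z_-\cdot z=(z_-)^2\ge 0$ (not $-(z_-)^2$), and for $v\ge 0$ one has $(\mu_k^1,v)_U\le 0$ (not $\ge 0$). With the correct convention the estimate
$(\mu_k^1,\,v-(w_k-u_k))_U=(\mu_k^1,v)_U-\tfrac1{\epsilon_k}\|(w_k-u_k)_-\|_U^2\le 0$
comes out directly in the direction required by the characterization $(\mu,\,v-\bar v)_U\le 0$ for all $v\in C$, and the limit passage (weak convergence of $\mu_k^1$ paired with strong convergence of $w_k-u_k$) closes the argument. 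In your version you implicitly switch to the convention $z_-=\max(-z,0)\ge 0$, detect that the inequality then points the wrong way, and ``repair'' it by proving $(\mu_k^1,\,(w_k-u_k)-v)_U\le 0$; but in the limit this gives $(\bar\mu^1,\,v-(\bar w-\bar u))_U\ge 0$, which is the \emph{reverse} of the subdifferential condition and therefore does not establish $\bar\mu^1\in\partial I_C(\bar w-\bar u)$ (it would instead show $-\bar\mu^1\in\partial I_C(\bar w-\bar u)$). The fix is purely to adopt the paper's sign convention for the negative part consistently, after which your computation collapses to the paper's one-line estimate; but as written the last step proves the wrong inequality while asserting it is the right one.
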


Here, we used the notation
\[
  (\bar{\mu}^1, z-v)_U  = \int_\Omega \int_I \bar\mu^1(t,x)( z(x) - v(t,x)) \dd t \dd x.
\]
for $v\in U$, $z\in W$.

\begin{proof}
    Let $(u_k,w_k)$ be the sequence of solutions to the smoothed problem \eqref{eq:auxprob} corresponding to a sequence $(\epsilon_k) \searrow 0$. Equation \eqref{eq:oc_lim} is a consequence of passing to the limit in \eqref{eq:oc_aux} along a subsequence using Lemma \ref{tm:mult_bounded}.
    Property \eqref{eq:oc_lambda} follows from Lemma \ref{tm:theory_lambda_conv}.
    Next, we show
     $\bar{\mu}^1 \in \partial I_C(\bar w - \bar u) $.
     Let $v\in C$. Using the definition of
     $\mu_k^1$ as $\mu_k^1=\frac1{\epsilon_k} (w_k-u_k)_- \le 0$ in Lemma \ref{tm:mult_bounded},
     we get
    \begin{align*}
        (\mu_k^1,v-(w_k -u_k) )_U = (\mu_k^1, v )_U - \frac1{\epsilon_k} \|(w_k-u_k)_-\|_U^2 \leq 0.
    \end{align*}
    Convergence of $w_k \to \bar w$ in $W$, $u_k\to \bar u$ in $U$, and $\mu_k^1 \rightharpoonup \bar\mu^1$ in $U$ implies
    \begin{align*}
        (\bar{\mu}^1, v-(\Bar{w}-\bar u) )_U =  \lim_{k \to \infty}  (\mu_k^1,v-(w_k - u_k) )_U \leq 0.
    \end{align*}
    The claim follows now by the definition of the convex subdifferential. The corresponding statement for $\bar{\mu}^2$ can be shown analogously.
\end{proof}
The statements
    $\bar{\mu}^1 \in \partial I_C(\bar w - \bar u) $ and $\bar{\mu}^2 \in \partial I_C(\bar w + \bar u)$
    are equivalent to the complementarity system
    \begin{align*}
&\Bar{\mu}^1 \leq 0, \hspace{3mm} \Bar{w}-\bar u \geq 0, \hspace{3mm} (\Bar{\mu}^1, \Bar{w}-\bar u)_U=0 \\ \text{and} \hspace{3mm}
    &\Bar{\mu}^2 \leq 0,  \hspace{3mm}\Bar{w}+\bar u \geq 0,  \hspace{3mm}(\Bar{\mu}^2, \Bar{w}+\bar u)_U=0.
\end{align*}

\subsection{Regularity results}

Now, we want to investigate some regularity results for solutions $(\bar u, \bar w)$.
The following lemma is used to prove $L^{\infty}$-regularity of the solutions.

\begin{lemma}
{\normalfont (  \cite[Lemma 3.4]{49snObstacle})}
\label{tm:aux_linf}
    Let    $r>1$
    and $q>2$ be such that $\frac1{r}+\frac{2}{q}<1$ and  $W \hookrightarrow L^q(\Omega)$.
    Then there is $c>0$ such that if $g\in L^r(\Omega)$, $w\in W$, and
    \begin{align*}
        (w,w-w_n)_W \leq \int_{\Omega} g (w-w_n) \dd x \hspace{5mm} \forall n \in \N
    \end{align*} for $w_n:=  \max(-n,\min(w,n))$ it follows \begin{align*}
        \|w\|_{L^{\infty}(\Omega)} \leq c\|g\|_{L^r(\Omega)}.
    \end{align*}
\end{lemma}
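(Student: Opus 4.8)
The statement is a Stampacchia-type $L^\infty$ bound obtained via the truncation method, so I would follow the classical De Giorgi--Stampacchia iteration. Set $w_n := \max(-n,\min(w,n))$ and, more usefully, work with the level sets $A_k := \{x\in\Omega : |w(x)| > k\}$ for $k \ge 0$. The function $w - w_n$ vanishes where $|w|\le n$ and equals $\operatorname{sign}(w)(|w|-n)$ where $|w|>n$, so it is exactly the truncation of $w$ at level $n$ in the sense needed to test the variational inequality. The hypothesis $(w,w-w_n)_W \le \int_\Omega g(w-w_n)\dd x$ is the abstract replacement for "$w$ is a subsolution", and the monotonicity built into Lemma~\ref{tm:propFrac}(ii) and Lemma~\ref{lm:inequPosPart} (the $W$-inner product controls the truncations from below) is what lets one bound the $W$-norm of $w-w_n$ by the same quantity.

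The steps, in order, would be: (1) From $(w,w-w_n)_W \le \int_\Omega g(w-w_n)\dd x$ and the coercivity/monotonicity of $(\cdot,\cdot)_W$ on truncations, derive an inequality of the form $\|w - w_n\|_W^2 \le c\int_{A_n} g\,(w-w_n)\dd x$. (2) Use the continuous embedding $W\hookrightarrow L^q(\Omega)$ from Lemma~\ref{tm:propFrac}(iv) to get $\|w-w_n\|_{L^q(\Omega)}^2 \le c\int_{A_n} g\,(w-w_n)\dd x$. (3) Apply Hölder's inequality on the right-hand side with exponents tuned to $r$: write $\int_{A_n} g(w-w_n)\dd x \le \|g\|_{L^r}\,\|w-w_n\|_{L^q}\,|A_n|^{1 - 1/r - 1/q}$, which is where the condition $\tfrac1r + \tfrac2q < 1$ enters to make the measure exponent positive (one power of $|A_n|^{1-1/r-1/q}$ will be spare after absorbing one $\|w-w_n\|_{L^q}$). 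This yields $\|w-w_n\|_{L^q(\Omega)} \le c\,\|g\|_{L^r(\Omega)}\,|A_n|^{\theta}$ with $\theta := 1 - \tfrac1r - \tfrac1q > \tfrac1q$ after a short computation. (4) For $h > n$ observe $\|w-w_n\|_{L^q}^q \ge \int_{A_h}(|w|-n)^q \ge (h-n)^q|A_h|$, hence $(h-n)|A_h|^{1/q} \le c\|g\|_{L^r}|A_n|^\theta$, i.e. $\varphi(h) \le \dfrac{c\|g\|_{L^r(\Omega)}}{h-n}\,\varphi(n)^{q\theta}$ where $\varphi(k):=|A_k|$. Since $q\theta > 1$, the standard iteration lemma (Stampacchia's lemma, e.g. \cite[Lemma~B.1]{49snObstacle} or the analogous statement) gives $\varphi(k_0)=0$ for $k_0 = c'\|g\|_{L^r(\Omega)}\,\varphi(0)^{q\theta-1}/\ldots$; combined with $\varphi(0)=|\{|w|>0\}|\le|\Omega|$, this produces $\|w\|_{L^\infty(\Omega)} \le c\|g\|_{L^r(\Omega)}$ with $c$ depending only on $|\Omega|$, $q$, $r$, and the embedding constant.

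The main obstacle is step (1): one must justify that $(w, w-w_n)_W \ge \|w - w_n\|_W^2$ (or at least $\ge c\|w-w_n\|_W^2$), i.e. that the bilinear form behaves correctly under the two-sided truncation $w\mapsto w_n$. For the $L^2(\Omega)$-part this is immediate. For the Gagliardo seminorm (or the spectral double integral $\tfrac12\iint(w(x)-w(y))^2 J(x,y)$) one needs the pointwise inequality $\big((w-w_n)(x)-(w-w_n)(y)\big)^2 \le \big((w(x)-w(y))\big)\big((w-w_n)(x)-(w-w_n)(y)\big)$ for a.a.\ $x,y$ — a case check on the signs and magnitudes of $w(x),w(y)$ relative to $\pm n$, entirely parallel to the four-case argument in the proof of Lemma~\ref{lm:inequPosPart}. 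Since this is exactly the hypothesis under which \cite[Lemma~3.4]{49snObstacle} is stated, I would cite that lemma directly rather than reprove the iteration; the only thing to check here is that our spaces $W$ in \eqref{eq_def_W} satisfy its structural assumptions, which they do by Lemma~\ref{tm:propFrac}(iv) (the embedding into $L^q$, $q>2$) and the monotonicity of the inner product on truncations.
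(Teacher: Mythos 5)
The paper gives no proof of this lemma at all: it is imported verbatim from \cite[Lemma 3.4]{49snObstacle}, and the only thing the paper implicitly relies on is that the spaces $W$ from \eqref{eq_def_W} satisfy the structural hypotheses of that result (the embedding $W\hookrightarrow L^q(\Omega)$ and the behaviour of the bilinear form under truncation). Your proposal is therefore strictly more than what the paper does, and the Stampacchia iteration you sketch is the correct underlying argument. In particular your step (1) is sound: writing $w=w_n+(w-w_n)$ and noting that both $t\mapsto\max(-n,\min(t,n))$ and $t\mapsto t-\max(-n,\min(t,n))$ are nondecreasing, the differences $w_n(x)-w_n(y)$ and $(w-w_n)(x)-(w-w_n)(y)$ always share the sign of $w(x)-w(y)$, which gives $(w,w-w_n)_W\ge\|w-w_n\|_W^2$ for every kernel representation of the inner product, exactly in the spirit of Lemma \ref{lm:inequPosPart}. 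The H\"older bookkeeping in steps (2)--(4) is also right: the exponent on the level sets is $q\theta=q(1-\tfrac1r-\tfrac1q)>1$ precisely because $\tfrac1r+\tfrac2q<1$, which is what makes Stampacchia's iteration lemma close. Two small points to tidy up if you were to write this out: the displayed recursion should read $\varphi(h)\le\bigl(c\|g\|_{L^r(\Omega)}/(h-n)\bigr)^{q}\varphi(n)^{q\theta}$ (the $q$-th power on the prefactor is missing in your sketch), and the hypothesis only provides the variational inequality for integer truncation levels $n\in\N$, whereas the iteration uses a continuum of levels $k_j=d(1-2^{-j})$; one either has to run the iteration on integers or observe that the inequality extends to all real levels, a point the cited reference handles and that your final decision to cite \cite[Lemma 3.4]{49snObstacle} quietly absorbs.
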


\begin{theorem}\label{tm:linf_reg}
    Let  $(\bar u, \bar w)$ be a local solution of problem \eqref{eq:minprob}
    such that the map $x\mapsto \|f'(\bar u)(\cdot,x)\|_{L^1(I)}$ is in $L^r(\Omega)$ with $r$ as in  Lemma \ref{tm:aux_linf} above. Then it holds $\Bar{w}\in L^{\infty}(\Omega)$ and   $\bar u\in L^{\infty}(I\times \Omega)$.
\end{theorem}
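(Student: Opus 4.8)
The plan is to combine the necessary optimality condition \eqref{eq:oc_lim} with the complementarity system following Theorem~\ref{thm_nec_opt_cond} so as to produce an inequality of the form required by Lemma~\ref{tm:aux_linf}. Since the constraint \eqref{eq:conduw} yields $|\bar u(t,x)|\le\bar w(x)$ a.e., it suffices to prove $\bar w\in L^\infty(\Omega)$; then $|\bar u(t,x)|\le\|\bar w\|_{L^\infty(\Omega)}$ a.e.\ gives $\bar u\in L^\infty(I\times\Omega)$. Recall also that $\bar w\ge0$ a.e., because $\bar w$ is the strong $W$-limit of the functions $w_k\ge0$ from Lemma~\ref{tm:wepsgeq0} (Theorem~\ref{tm:conv_uw}).

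First I would split \eqref{eq:oc_lim}. Taking $z=0$ and letting $v$ range over $U$ yields the pointwise identity $f'(\bar u)+\alpha\bar u-\bar\mu^1+\bar\mu^2=0$ in $U$. Taking $v=0$ and writing $m^i(x):=\int_I\bar\mu^i(t,x)\dd t$, which lies in $L^2(\Omega)$ (Cauchy--Schwarz) and satisfies $m^i\le0$ a.e.\ since $\bar\mu^i\le0$, gives
\[
 \beta(\bar w,z)_W+\gamma\braket{\bar\lambda,z}_W+\io(m^1+m^2)\,z\dd x=0\qquad\forall z\in W .
\]
Next I would bound $-(m^1+m^2)$ by $F(x):=\|f'(\bar u)(\cdot,x)\|_{L^1(I)}$. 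The complementarity relations $\bar\mu^1\le0$, $(\bar\mu^1,\bar w-\bar u)_U=0$ force $\bar\mu^1(t,x)=0$ whenever $\bar u(t,x)\ne\bar w(x)$, and $\bar\mu^2(t,x)=0$ whenever $\bar u(t,x)\ne-\bar w(x)$; since $\bar w\ge0$, for a.a.\ $(t,x)$ with $\bar w(x)>0$ at most one of $\bar\mu^1(t,x),\bar\mu^2(t,x)$ is nonzero. If $\bar\mu^1(t,x)\ne0$ then $\bar u(t,x)=\bar w(x)$ and $\bar\mu^2(t,x)=0$, so the identity above gives $\bar\mu^1(t,x)=f'(\bar u)(t,x)+\alpha\bar w(x)$; as $\bar\mu^1(t,x)\le0$ and $\alpha\bar w(x)\ge0$, this forces $|\bar\mu^1(t,x)|=-f'(\bar u)(t,x)-\alpha\bar w(x)\le|f'(\bar u)(t,x)|$, and symmetrically when $\bar\mu^2(t,x)\ne0$. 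Integrating over $t\in I$ gives $0\le-(m^1+m^2)(x)\le F(x)$ for a.a.\ $x$ with $\bar w(x)>0$.

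Then I would test the displayed identity with $z:=\bar w-\bar w_n=(\bar w-n)_+$, where $\bar w_n=\max(-n,\min(\bar w,n))=\min(\bar w,n)$; this $z$ belongs to $W$ by truncation stability (cf.\ Lemma~\ref{tm:propFrac}(ii)), is nonnegative, and is supported in $\{\bar w>n\}\subseteq\{\bar w>0\}$. Along the subsequence of Lemma~\ref{tm:mult_bounded}, $\lambda_k=G_{\epsilon_k}'(w_k)\to\bar\lambda$ in $W^*$, and since $w_k\ge0$ and $\psi_{\epsilon_k}'\ge0$, Lemma~\ref{tm:psieps}(iv) gives $\braket{\lambda_k,z}_W=\io 2w_k\psi_{\epsilon_k}'(w_k^2)\,z\dd x\ge0$, hence $\braket{\bar\lambda,z}_W\ge0$. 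Using this and $-(m^1+m^2)\le F$ on $\{\bar w>0\}$ together with $F\ge0$, the identity becomes
\[
 \beta(\bar w,\bar w-\bar w_n)_W=-\gamma\braket{\bar\lambda,\bar w-\bar w_n}_W-\io(m^1+m^2)(\bar w-\bar w_n)\dd x\le\io F\,(\bar w-\bar w_n)\dd x
\]
for every $n\in\N$. Dividing by $\beta$, the hypothesis $F\in L^r(\Omega)$ lets me apply Lemma~\ref{tm:aux_linf} with $g=F/\beta$, which yields $\|\bar w\|_{L^\infty(\Omega)}\le c\beta^{-1}\|F\|_{L^r(\Omega)}<\infty$, and hence $\bar u\in L^\infty(I\times\Omega)$ as above.

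The step I expect to be the main obstacle is the bound $-(m^1+m^2)\le F$: it requires carefully combining the support information from the two complementarity systems (using $\bar w\ge0$ to separate the two contact sets) with the sign constraints $\bar\mu^i\le0$, so that the term $\alpha\bar w$ occurring in $\bar\mu^1=f'(\bar u)+\alpha\bar w$ is absorbed into $|f'(\bar u)|$ rather than left over. Without this absorption one would only obtain $-(m^1+m^2)\le F+\alpha T\bar w$, and since $\bar w$ is a priori only in $W\hookrightarrow L^q(\Omega)$ with $q$ typically smaller than the exponent $r$ of Lemma~\ref{tm:aux_linf}, one would then be forced into an extra bootstrapping argument to reach the integrability required by that lemma. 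A minor point is the verification that all pairings are well defined, in particular that $\bar\mu^i\in U$ so that $m^i\in L^2(\Omega)$ and that $(\bar w-n)_+\in W$.
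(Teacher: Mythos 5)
Your proof is correct, and it reaches the paper's key inequality $\beta(\bar w,\bar w-\bar w_n)_W\le\int_\Omega\|f'(\bar u)(\cdot,x)\|_{L^1(I)}(\bar w-\bar w_n)\dd x$ by a genuinely different route. The paper tests the \emph{smoothed} optimality condition \eqref{eq:oc_aux} with the pair $(\bar u-\bar u_n,\bar w-\bar w_n)$, passes to the limit $k\to\infty$, discards the multiplier terms via the subdifferential inequalities $(\bar\mu^1,\bar w-\bar w_n-(\bar u-\bar u_n))_U\ge0$ (which hinge on $|\bar u_n|\le\bar w_n$), and then converts the surviving term $f'(\bar u)(\bar u_n-\bar u)$ using the pointwise estimate $|\bar u-\bar u_n|\le\bar w-\bar w_n$ together with H\"older's inequality. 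You instead stay at the level of the limiting condition \eqref{eq:oc_lim}, split it into its $u$- and $w$-components, and eliminate $\bar u$ and the multipliers by resolving the complementarity system pointwise on the contact sets $\{\bar u=\pm\bar w\}$, which yields the bound $|\bar\mu^1|+|\bar\mu^2|\le|f'(\bar u)|$ there (your sign analysis absorbing $\alpha\bar w$ is exactly right, and the restriction to $\{\bar w>n\}\subseteq\{\bar w>0\}$ correctly avoids the degenerate set where both multipliers may be active); you then test only the $w$-equation. The one ingredient you need beyond the bare statement of Theorem~\ref{thm_nec_opt_cond} is $\braket{\bar\lambda,z}_W\ge0$ for $z\ge0$, which you legitimately extract from the construction $\lambda_k\to\bar\lambda$ with $w_k\ge0$; the paper uses the same sign fact but applies it before passing to the limit, as $G_{\epsilon_k}'(w_k)(\bar w-\bar w_n)\ge0$. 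Your approach buys an explicit pointwise bound on the multipliers in terms of $|f'(\bar u)|$ on the contact set, which is of some independent interest; the paper's approach avoids any pointwise identification of the multipliers and reuses the already-established testing machinery, at the cost of carrying the truncation $\bar u_n$ of the control through the argument.
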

Note that we identify $f'(\bar u)$ with its Riesz representative $f'(\bar u) \in U = L^2(I\times \Omega)$.
\begin{proof}
    Using $\bar w\ge 0$ and Lemma \ref{tm:propFrac} (ii), let \begin{equation*}
        \Bar{w}_{n} := \max(-n,\min(\Bar{w},n))=\min(\Bar{w},n) \in W, \hspace{5mm} \bar u_{n} := \max(-n,\min(\bar u,n)) \in U.
    \end{equation*}
    Let $(u_k,w_k)$ be the sequence of solutions to the smoothed problem \eqref{eq:auxprob} corresponding to a sequence $(\epsilon_k) \searrow 0$.
Testing \eqref{eq:oc_aux} with $(\bar u-\bar u_n,\Bar{w}-\bar{w}_n)$ yields
\begin{align}\begin{split}
\label{eq:forlinf}
    f'(u_k)(\bar u-\bar u_n)&+\alpha (u_k,\bar u-\bar u_n)_U+\beta(w_k,\Bar{w}-\bar{w}_n)_W+\gamma G_{\epsilon_k}'(w_k)(\Bar{w}-\bar{w}_n) \\&+(u_k-\bar u,\bar u-\bar u_n)_U+(w_k-\Bar{w},\Bar{w}-\bar{w}_n)_{L^2(\Omega)}
    \\
    &+ ( \mu^1_k , \Bar{w}-\bar{w}_n - (\bar u-\bar u_n))_U
    +( \mu^2_k , \Bar{w}-\bar{w}_n + \bar u-\bar u_n)_U=0. 	\end{split}
    \end{align}
    Since $w_k\ge 0$ by Lemma \ref{tm:wepsgeq0}, one can observe
\begin{align*} \gamma G_{\epsilon_k}'(w_k)(\Bar{w}-\bar{w}_n)\geq 0,
\end{align*}
so when passing to the limit $k\to\infty$ in \eqref{eq:forlinf}
along a subsequence as in Lemma \ref{tm:mult_bounded},
we obtain
\begin{multline} \label{eq_15}
  \alpha (\bar u,\bar u-\bar u_n)_U+\beta(\Bar{w},\Bar{w}-\bar{w}_n)_W + (\Bar{\mu}^1, \Bar{w}-\bar{w}_n - (\bar u-\bar u_n))_U  +(\Bar{\mu}^2, \Bar{w}-\bar{w}_n + \bar u-\bar u_n)_U \\
  \leq   f'(\bar u)(\bar u_n-\bar u).
  \end{multline}
By construction, it follows $\alpha (\bar u,\bar u-\bar u_n)_U \geq 0$.
In addition, the truncated functions satisfy $|\bar u_n| \le \bar{w}_n$.
Using $\bar{\mu}^1 \in \partial I_C(\bar w - \bar u) $ and $\bar{\mu}^2 \in \partial I_C(\bar w + \bar u)$
with $C$ as in \eqref{eq_def_C} it follows
\[
 (\Bar{\mu}^1, \Bar{w}-\bar{w}_n - (\bar u-\bar u_n))_U\geq 0, \qquad  (\Bar{\mu}^2, \Bar{w}-\bar{w}_n + \bar u-\bar u_n)_U \geq 0.
\]
Inequality \eqref{eq_15}
can thus be reduced to \begin{equation} \label{eq:aux_linf_uwineq}
    \beta(\Bar{w}, \Bar{w}-\Bar{w}_n)_W \leq  f'(\bar u)(\bar u_n-\bar u).
\end{equation}
Here, we want to replace $\bar u_n-\bar u$ by $\Bar{w}-\Bar{w}_n$ to be able to apply Lemma \ref{tm:aux_linf}.
To this end, we want to prove
\begin{equation} \label{eq_18}
    |\bar{u}(t,x)-\bar u_n(t,x)|\leq \Bar{w}(x) - \Bar{w}_n(x) \quad \textrm{for a.a. } (t,x) \in I\times\Omega.
\end{equation}
If $|\bar u(t,x)|\le n$, then $ |\bar{u}(t,x)-\bar u_n(t,x)| =0\leq \Bar{w}(x) - \Bar{w}_n(x)$.
Suppose $\bar u(x)>n$. Due to the feasibility of $(\bar u,\Bar{w})$
it follows $\Bar{w}(x)\ge\bar{u}(x)>n$ and
 \[  |\bar{u}(t,x)-\bar u_n(t,x)| = \bar{u}(t,x) -n \leq \Bar{w}(x) - n = \Bar{w}(x) - \Bar{w}_n(x).\]
The case  $\bar u(t,x)<-n$ can be treated similarly.
This proves \eqref{eq_18}.

By Hölder's inequality and \eqref{eq_18}, we get
\begin{align*}
    \int_{\Omega}\int_I f'(\bar u) (\bar u-\bar u_n) \dd t \dd x &\leq \int_{\Omega}\int_I |f'(\bar u)| |\bar u-\bar u_n| \dd t \dd x \\
    &\leq \int_{\Omega} (\Bar{w}(x)-\Bar{w}_n(x))\int_I |f'(\bar u)(t,x)|  \dd t \dd x \\
    &= \int_{\Omega} (\Bar{w}(x)-\Bar{w}_n(x))  \|f'(\bar u)(\cdot,x)\|_{L^1(I)}  \dd x.
\end{align*}
Using this estimate in \eqref{eq:aux_linf_uwineq} yields
\begin{align*}
    \beta(\bar{w},\Bar{w}-\Bar{w}_n)_W \leq  \int_{\Omega} \|f'(\bar u(\cdot,x))\|_{L^1(I)} (\Bar{w}-\Bar{w}_n) \dd x \hspace{5mm} \forall n \in \N.
\end{align*}
By assumption on $f'$, one can apply Lemma \ref{tm:aux_linf} with $g(x) = \|f'(\bar u)(\cdot,x)\|_{L^1(I)} $ to obtain $\bar w\in L^\infty(\Omega)$.
The claim $\bar u\in L^\infty(I \times \Omega)$ follows from $|\bar u|\le \bar w$.
\end{proof}

For some choices of $d$ and $s$, the assumption on $f'$ in the previous theorem is always satisfied.
\begin{corollary}
    Let $d$ and $s$ such that $d \leq  2s$ or $d>2s$ and $\frac{2d}{d-2s} >4$. Then it holds $\bar{w}\in L^{\infty}(\Omega)$ and   $\bar u\in L^{\infty}(I\times \Omega)$.
\end{corollary}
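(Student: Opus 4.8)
The plan is to verify that under the stated conditions on $d$ and $s$, the regularity assumption of Theorem \ref{tm:linf_reg} is automatically satisfied, so that the conclusion $\bar w\in L^\infty(\Omega)$ and $\bar u\in L^\infty(I\times\Omega)$ follows directly. Recall that Theorem \ref{tm:linf_reg} requires the map $x\mapsto \|f'(\bar u)(\cdot,x)\|_{L^1(I)}$ to lie in $L^r(\Omega)$ for some $r>1$ such that, together with some $q>2$ with $W\hookrightarrow L^q(\Omega)$, one has $\frac1r+\frac2q<1$. So the whole task reduces to producing such an exponent pair $(r,q)$.

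First I would record what is known about the integrability of $f'(\bar u)$: by Assumption \ref{ass:f}, $f$ is Fréchet differentiable from $U=L^2(I\times\Omega)$ to $\R$, and we identify $f'(\bar u)$ with its Riesz representative in $U=L^2(I\times\Omega)$. Hence $f'(\bar u)(\cdot,x)\in L^2(I)$ for a.a.\ $x$, and by Hölder on the bounded interval $I$ we get $\|f'(\bar u)(\cdot,x)\|_{L^1(I)}\le |I|^{1/2}\|f'(\bar u)(\cdot,x)\|_{L^2(I)}$. Taking the square and integrating over $\Omega$, Fubini gives
\[
 \int_\Omega \|f'(\bar u)(\cdot,x)\|_{L^1(I)}^2\,\dd x \le |I|\,\|f'(\bar u)\|_{L^2(I\times\Omega)}^2 <\infty,
\]
so $g(x):=\|f'(\bar u)(\cdot,x)\|_{L^1(I)}$ always belongs to $L^2(\Omega)$. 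Thus we may take $r=2$ in Lemma \ref{tm:aux_linf}, provided we can find $q>2$ with $W\hookrightarrow L^q(\Omega)$ and $\frac12+\frac2q<1$, i.e.\ $q>4$.

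Next I would split into the two cases of the corollary using the embeddings from Lemma \ref{tm:propFrac} (iv). If $d\le 2s$, then $W$ embeds into every $L^q(\Omega)$ with $q\in[1,\infty)$ (for $d=2s$) or even into a Hölder space (for $d<2s$, which in particular embeds into $L^q(\Omega)$ for all $q$); in either situation we may simply pick $q=5>4$, and then $r=2$, $q=5$ satisfy $\frac1r+\frac2q=\frac12+\frac25<1$. If instead $d>2s$ and $\frac{2d}{d-2s}>4$, then $W\hookrightarrow L^{2d/(d-2s)}(\Omega)$ by Lemma \ref{tm:propFrac} (iv); set $q:=\frac{2d}{d-2s}>4$, so again $\frac1r+\frac2q=\frac12+\frac2q<\frac12+\frac12=1$. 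In both cases the hypotheses of Lemma \ref{tm:aux_linf} hold with $g\in L^r(\Omega)=L^2(\Omega)$, hence the assumption of Theorem \ref{tm:linf_reg} is met, and the conclusion follows.

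There is essentially no obstacle here — the argument is a bookkeeping exercise matching the exponent conditions of Lemma \ref{tm:aux_linf} against the Sobolev embeddings of Lemma \ref{tm:propFrac} (iv). The only mild subtlety worth a sentence is why $r=2$ is admissible in the borderline $d=2s$ or $d<2s$ cases: there $W$ embeds into $L^q$ for arbitrarily large $q$, so $\frac1r+\frac2q$ can be made as close to $\frac1r=\frac12<1$ as desired, and in particular $<1$; and in the case $\frac{2d}{d-2s}>4$ the strict inequality is exactly what guarantees $\frac2q<\frac12$. No other estimates are needed.
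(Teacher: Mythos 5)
Your proposal is correct and follows essentially the same route as the paper: establish $x\mapsto \|f'(\bar u)(\cdot,x)\|_{L^1(I)}\in L^2(\Omega)$ via H\"older and Fubini, then use Lemma \ref{tm:propFrac} (iv) to find $q>4$ with $W\hookrightarrow L^q(\Omega)$ so that $\frac12+\frac2q<1$ and Theorem \ref{tm:linf_reg} applies. The case distinction and the choice $r=2$ match the paper's argument exactly.
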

\begin{proof}
	Since
    \[
    \int_{\Omega}\left(\int_I |f'(\bar u)(t,x)| \dd t\right)^2\dd x \le T  \int_{\Omega}\int_I |f'(\bar u)(t,x)|^2 \dd t \dd x < \infty,
     \]
    the function $x\mapsto \| f'(\bar u)(\cdot,x)\|_{L^1(I)}$ is in $L^2(\Omega)$. By assumption on $d$ and $s$ and the fractional Sobolev embeddings from Lemma \ref{tm:propFrac} (iv), there is $q>4$ such that $W$ embeds into $L^q(\Omega)$. Therefore, $\frac{1}{2}+\frac{2}{q}<1$ holds, and we can apply Theorem \ref{tm:linf_reg}.
\end{proof}

\begin{theorem}  Let $\epsilon_k\searrow 0$, $(u_k,w_k)$ the sequence of solutions to \eqref{eq:auxprob} with multipliers $\lambda_k\to \Bar{\lambda}$ and $\mu^1_k, \mu^2_k$  as defined previously.
Then $( \lambda_{k})$ is bounded in $L^1(\Omega)$, and $\Bar{\lambda}$ can be identified with a measure from $\mathcal{M}(\Omega)=C_0(\Omega)^*$.
\end{theorem}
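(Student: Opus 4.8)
The plan is to obtain the $L^1(\Omega)$-bound by testing the optimality system with the constant function, and then to upgrade the $W^*$-convergence to a measure-valued limit by density. The structural fact that makes this work is that, since we are in the regime $s<\tfrac12$, the constant function $\mathbf 1$ (understood as $1$ on $\Omega$, extended by $0$ on $\R^d\setminus\Omega$ when $W=\tilde H^s(\Omega)$ or $W=H_0^s(\Omega)$) belongs to $W$ for every choice in \eqref{eq_def_W}: this is immediate for $H^s(\Omega)$ and $H_0^s(\Omega)$, and for $\tilde H^s(\Omega)$ and $\mathbb H^s(\Omega)$ it follows because characteristic functions of bounded Lipschitz sets lie in $H^s(\R^d)$ precisely when $s<\tfrac12$.

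First I would fix the sign of $\lambda_k$: by Lemma~\ref{tm:psieps}~(iv) the functional $\lambda_k=G_{\epsilon_k}'(w_k)$ is represented by the $L^2(\Omega)$-function $x\mapsto 2w_k(x)\psi_{\epsilon_k}'(w_k(x)^2)=p\,w_k(x)\min\bigl(\epsilon_k^{p-2},w_k(x)^{p-2}\bigr)$, and since $\psi_{\epsilon_k}'\ge 0$ and $w_k\ge 0$ by Lemma~\ref{tm:wepsgeq0}, we have $\lambda_k\ge 0$ a.e., so that $\|\lambda_k\|_{L^1(\Omega)}=\io\lambda_k\dd x$. Next I would take the identity
\[
 \gamma\braket{\lambda_k,z}_W=-\beta(w_k,z)_W-(w_k-\Bar{w},z)_{L^2(\Omega)}-(\mu_k^1,z)_U-(\mu_k^2,z)_U\qquad\forall z\in W
\]
derived in the proof of Lemma~\ref{tm:mult_bounded} (valid for all $k$ large enough by Theorem~\ref{tm:conv_uw}) and test it with $z=\mathbf 1\in W$. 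Since $\braket{\lambda_k,\mathbf 1}_W=\io\lambda_k\dd x=\|\lambda_k\|_{L^1(\Omega)}$, this gives an explicit formula for $\gamma\|\lambda_k\|_{L^1(\Omega)}$. Each term on the right is uniformly bounded: $(w_k)$ is bounded in $W$ and in $L^2(\Omega)$ by Theorem~\ref{tm:conv_uw}, so $|(w_k,\mathbf 1)_W|\le\|w_k\|_W\|\mathbf 1\|_W$ (and for $W=H^s(\Omega),H_0^s(\Omega)$ the Gagliardo part vanishes and $(w_k,\mathbf 1)_W=\io w_k\dd x$), while $(\mu_k^1),(\mu_k^2)$ are bounded in $U$ by Lemma~\ref{tm:mult_bounded}, so $|(\mu_k^i,\mathbf 1)_U|\le (T|\Omega|)^{1/2}\|\mu_k^i\|_U$. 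Hence $\|\lambda_k\|_{L^1(\Omega)}\le C$ for all large $k$, which proves the first claim.

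For the identification of $\Bar{\lambda}$, I would use that $\lambda_k\in L^2(\Omega)\hookrightarrow W^*$, so for $\varphi\in C_0(\Omega)\cap W$ the pairing reduces to an integral and the $L^1$-bound yields $|\braket{\lambda_k,\varphi}_W|=\bigl|\io\lambda_k\varphi\dd x\bigr|\le C\|\varphi\|_{C_0(\Omega)}$. Passing to the limit and using $\lambda_k\to\Bar{\lambda}$ in $W^*$ gives $|\braket{\Bar{\lambda},\varphi}_W|\le C\|\varphi\|_{C_0(\Omega)}$ for all $\varphi\in C_0(\Omega)\cap W$. Since $C_0(\Omega)\cap W$ is dense in $C_0(\Omega)$ by Lemma~\ref{tm:propFrac}~(iii), the functional $\varphi\mapsto\braket{\Bar{\lambda},\varphi}_W$ extends uniquely to a bounded linear functional on $C_0(\Omega)$, i.e. to an element of $\mathcal M(\Omega)=C_0(\Omega)^*$ by Riesz representation; this is the measure with which $\Bar{\lambda}$ is identified.

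I expect the step requiring the most care to be the admissibility of the test function $\mathbf 1$, since this is exactly where the standing assumption $s<\tfrac12$ is used; for $W=\tilde H^s(\Omega)$ it ultimately rests on the finiteness of $\int_\Omega\operatorname{dist}(x,\partial\Omega)^{-2s}\dd x$, hence on $2s<1$. Once $\mathbf 1\in W$ is secured, the remaining arguments merely recombine boundedness and convergence facts already available from Theorem~\ref{tm:conv_uw} and Lemmas~\ref{tm:wepsgeq0}, \ref{tm:mult_bounded} and \ref{tm:propFrac}.
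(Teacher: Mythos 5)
Your argument is correct in substance and reaches the same conclusions, but the key step --- extracting $\|\lambda_k\|_{L^1(\Omega)}$ from the optimality system --- is carried out with a genuinely different test function than in the paper, and this is exactly where the two arguments differ in generality. You test with the constant function $\mathbf 1$, which requires $\mathbf 1\in W$; as you yourself note, for $W\in\{H_0^s(\Omega),\tilde H^s(\Omega),\mathbb H^s(\Omega)\}$ this holds only when $s<\tfrac12$. The paper instead tests with $s_{k,n}:=\min(n\,w_k,1)$, an approximation of $\operatorname{sgn}(w_k)$: since $s_{k,n}$ is the truncation at levels $\pm1$ of $n\,w_k\in W$ (and $w_k\ge0$ by Lemma \ref{tm:wepsgeq0}), Lemma \ref{tm:propFrac}(ii) guarantees both $s_{k,n}\in W$ and the sign information $(w_k,s_{k,n})_W\ge\frac1n\|s_{k,n}\|_W^2\ge0$ needed to discard the $\beta$-term, and dominated convergence in $n$ then gives $\braket{\lambda_k,s_{k,n}}_W\to\|\lambda_k\|_{L^1(\Omega)}$ precisely because $\lambda_k=2w_k\psi_{\epsilon_k}'(w_k^2)\ge0$ vanishes where $w_k$ does. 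That route works for every $s\in(0,1)$ and every admissible $W$ from \eqref{eq_def_W}, whereas the theorem as stated carries no restriction to $s<\tfrac12$ --- that regime is only singled out in the introduction as the case of practical interest. So your proof covers a strictly smaller set of cases unless you either (a) add the hypothesis $s<\tfrac12$ (or $W=H^s(\Omega)$, where constants trivially belong to $W$), or (b) replace $\mathbf 1$ by the truncations $\min(n\,w_k,1)$ and pass to the limit in $n$, which costs only the extra monotonicity input from Lemma \ref{tm:propFrac}(ii). Everything else matches the paper's reasoning: the sign of $\lambda_k$, the boundedness of $(w_k)$ and $(\mu_k^i)$, and the identification of $\bar\lambda$ with a measure via density of $C_0(\Omega)\cap W$. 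Your direct bound $|\braket{\bar\lambda,\varphi}_W|\le C\|\varphi\|_{C_0(\Omega)}$ followed by unique extension is a clean equivalent of the paper's weak-$*$ compactness argument in $C_0(\Omega)^*$, and arguably slightly more economical since it avoids extracting a further subsequence.
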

\begin{proof}
    The proof works as the proof of \cite[Lemma 5.9]{sparse}.
    We test the optimality condition \eqref{eq:oc_aux} with $(0,s_{k,n})$, where $s_{k,n} := \min(n \cdot w_k,1) \approx \operatorname{sgn}(w_k)$. This yields
    \begin{multline*}
    \beta (w_k,s_{k,n})_W+\gamma \braket{ \lambda_k ,s_{k,n}}_W +(w_k-\Bar{w}, s_{k,n})_{L^2(\Omega)}
     +  ( \mu_k^1, s_{k,n})_U +  ( \mu_k^2, s_{k,n})_U = 0.
    \end{multline*}
    With $(w_k,s_{k,n})_{L^2(\Omega)} \geq 0$ and $(w_k,s_{k,n})_W\geq \frac1{n} \|s_{k,n}\|_W^2$ from Lemma \ref{tm:propFrac},
    we obtain
    \begin{equation*}\begin{aligned}
        \beta \frac1{n}\|s_{k,n}\|_W^2 + \gamma \braket{ \lambda_k ,s_{k,n}}_W & \leq (\Bar{w}-w_k,s_{k,n})_{L^2(\Omega)} -(\mu_k^1, s_{k,n})_U - ( \mu_k^2, s_{k,n})_U \\
        &\leq \|\Bar{w}\|_{L^1(\Omega)} + \|\mu^1_k\|_{L^1(I \times\Omega)} + \|\mu^2_k\|_{L^1( I \times \Omega)}
        \end{aligned}
    \end{equation*}
    By dominated convergence, we can pass to the limit $n\to\infty$ in
    \begin{align*}
       \braket{ \lambda_k ,s_{k,n}}_W = \int_{\Omega} 2 w_k \psi_{\epsilon_k}'(w_k^2) s_{k,n} \dd x \to \int_{\Omega} 2 |w_k| \psi_{\epsilon_k}'(w_k^2)  \dd x = \| \lambda_k\|_{L^1(\Omega)}.
    \end{align*} This proves the boundedness of $( \lambda_{k})$  in $L^1(\Omega)$.
We identify $L^1(\Omega)$ with a subspace of $C_0(\Omega)^*$ using the canonical embedding,
which associates each $\lambda_k \in L^1(\Omega)$ with the functional $\phi \mapsto \int_\Omega \lambda_k \phi \dd x$.
Since $C_0(\Omega)$ is separable, we have (after possibly extracting a subsequence)
$\lambda_k \rightharpoonup^* \Tilde{\lambda}$ in $C_0(\Omega)^*$.
The density result in Lemma \ref{tm:propFrac} yields $\Tilde{\lambda}=\Bar{\lambda}$.
\end{proof}

\section{Iterative scheme}\label{sec:algo}

In this section, we will employ a majorize-minimization scheme to solve \eqref{eq:minprob}--\eqref{eq:conduw}.
The resulting method is Algorithm \ref{alg:alg_nonpen}.
For the well-posedness and convergence analysis of the algorithm we assume that $f$ satisfies the following properties in addition to Assumption \ref{ass:f}.
\begin{assumption}\label{ass:f'}
    The function $f'$ is completely continuous, i.e. $u_k \rightharpoonup \bar u$ implies $f'(u_k) \to f'(\bar u)$ in $U$.  Furthermore, $f'$ is Lipschitz continuous on bounded sets.
\end{assumption}
Lipschitz continuity of $f'$ is used in Lemma \ref{tm:algoWelldef} to show that condition \eqref{eq:alg_nonpen_fcond} in the algorithm can be satisfied.
The complete continuity of $f'$ is necessary to  pass to the limit in the optimality condition of \eqref{eq:minprob_algo_nonpen}
along weakly converging sequences of iterates in Lemma \ref{tm:conv_uk}.

\begin{algorithm}
\caption{Iterative scheme}\label{alg:alg_nonpen}
\begin{algorithmic}[1]
\State Choose $b>1$, $L>0$ and a monotonically decreasing sequence $(\epsilon_k)$ with $\epsilon_k \searrow 0$. Set $k=0$.
\State Find the smallest number $L_k$ from $\{{L}b^l: l\geq 0\}$ such that the solution $(u_{k+1},w_{k+1})$ of
\begin{multline} \label{eq:minprob_algo_nonpen}
    \underset{ (u,w)\in K }{\min} \varphi_k(u,w) := f'(u_k)(u-u_k) + \frac{\alpha}{2}\|u\|_U^2 + \frac{\beta}{2}\|w\|^2_W
    \\
    +\gamma \int_{\Omega} \psi'_{\epsilon_k}(w_k^2)(w^2-w_k^2) \dd x + \frac{L_k}{2}\|u-u_k\|_U^2
\end{multline}
satisfies \begin{align} \label{eq:alg_nonpen_fcond}
    f(u_{k+1}) \leq f(u_k) + f'(u_k)(u_{k+1}-u_k) + L_k\|u_{k+1}-u_k\|_U^2.
\end{align}
\State Set $k:= k+1$ and go to step 2.
\end{algorithmic}
\end{algorithm}

Algorithm \ref{alg:alg_nonpen} is similar to the one introduced in \cite{sparse}.
The term $f'(u_k)(u-u_k)+ \frac{L_k}{2}\|u-u_k\|_U^2$ is a quadratic approximation of $f(u)-f(u_k)$, which is inspired from proximal gradient methods.
The term $\psi'_{\epsilon_k}(w_k^2)(w^2-w_k^2)$ is an upper bound of $\psi_{\epsilon_k}(w^2)- \psi_{\epsilon_k}(w_k^2)$ due to the concavity of $\psi_{\epsilon_k}$, see also \cite{lp_cont}.
Thus the cost function \eqref{eq:minprob_algo_nonpen} is quadratic in $(u,w)$.
The condition \eqref{eq:alg_nonpen_fcond} ensures that the sequence of function values $(\Phi_{\epsilon_k}(u_k,w_k))$ is monotonically decreasing.
In difference to \cite{sparse},
we have to solve a constrained optimization problem in each step instead of an unconstrained one,
which complicates the convergence analysis considerably.

Subproblem \eqref{eq:minprob_algo_nonpen} is coercive and strictly convex, and therefore admits a unique solution for every $(u_k,w_k)$ and $L_k$.
 Its solution $(u_{k+1},w_{k+1})$ satisfies the optimality condition
 \begin{equation} \begin{gathered}\label{eq:oc_nonpen_alg_ineq}
     f'(u_k)(v-u_{k+1}) + \alpha (u_{k+1},v-u_{k+1})_U + \beta(w_{k+1},z-w_{k+1})_W \\+ 2 \gamma \int_{\Omega}\psi'_{\epsilon_k}(w_k^2)w_{k+1}(z-w_{k+1}) \dd x + L_k(u_{k+1}-u_k,v-u_{k+1})_U\geq 0 \hspace{5mm} \forall (v,z)\in K. \end{gathered}
 \end{equation}

The next lemma helps to prove well-definedness of Algorithm \ref{alg:alg_nonpen} in the subsequent lemma.

\begin{lemma}
\label{tm:descent_lemma}
    Let $M>0$, $\rho>0$, and $u\in U$ such that ${\|f'(u)-f'(v)\|_{U}\leq M \|u-v\|_U}$ for all $v\in B_{\rho}(u)$. Then \begin{align*}
        f(v)\leq f(u) + f'(u)(v-u)+\frac{M}{2}\|v-u\|_U^2 \hspace{5mm} \forall v \in B_{\rho}(u).
    \end{align*}
\end{lemma}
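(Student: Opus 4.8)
The plan is to prove this standard descent lemma via the fundamental theorem of calculus applied to $t \mapsto f(u + t(v-u))$ on $[0,1]$. Since $f$ is continuously Fréchet differentiable by Assumption \ref{ass:f}, this scalar function is $C^1$, and we may write
\[
 f(v) - f(u) = \int_0^1 f'(u + t(v-u))(v-u) \dd t.
\]
First I would subtract off the linear term $f'(u)(v-u)$, rewriting the difference as
\[
 f(v) - f(u) - f'(u)(v-u) = \int_0^1 \bigl( f'(u + t(v-u)) - f'(u) \bigr)(v-u) \dd t.
\]

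Next I would bound the integrand. For $v \in B_\rho(u)$ and $t \in [0,1]$, the point $u + t(v-u)$ lies in $B_\rho(u)$ as well (the ball is convex), so the hypothesis ${\|f'(u) - f'(w)\|_U \le M\|u-w\|_U}$ applies with $w = u + t(v-u)$, giving $\|f'(u+t(v-u)) - f'(u)\|_U \le M t \|v-u\|_U$. Applying the Cauchy--Schwarz inequality in $U$ to each term under the integral and then integrating in $t$ yields
\[
 \left| \int_0^1 \bigl( f'(u+t(v-u)) - f'(u) \bigr)(v-u) \dd t \right| \le \int_0^1 M t \|v-u\|_U^2 \dd t = \frac{M}{2}\|v-u\|_U^2,
\]
which is exactly the claimed estimate after rearranging.

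There is no real obstacle here; the only point requiring a small remark is the justification of the integral representation of $f(v)-f(u)$, i.e. that $t \mapsto f(u+t(v-u))$ is absolutely continuous with derivative $f'(u+t(v-u))(v-u)$ — this follows from the chain rule for Fréchet differentiable functions together with continuity of $f'$. Everything else is the Cauchy--Schwarz inequality and an elementary integration, and the convexity of $B_\rho(u)$ keeps all evaluation points inside the region where the local Lipschitz bound on $f'$ is valid.
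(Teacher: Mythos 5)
Your proof is correct and is essentially the same argument the paper uses: the paper simply cites the mean-value theorem in integral form (\cite[Prop.~3.3.4]{9nonsmoothAna}), which is precisely the representation $f(v)-f(u)=\int_0^1 f'(u+t(v-u))(v-u)\dd t$ that you write out and then estimate. Your version just supplies the details (convexity of the ball, the Lipschitz bound on the integrand, and the integration of $Mt$) that the citation delegates.
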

\begin{proof}
    This follows from the mean-value theorem \cite[Prop. 3.3.4]{9nonsmoothAna}.
\end{proof}
\begin{lemma}\label{tm:algoWelldef}
For each iterate $(u_k,w_k)$ of Algorithm \ref{alg:alg_nonpen}, there is  $L_k\geq 0$ and a solution $(u_{k+1},w_{k+1})$ of \eqref{eq:minprob_algo_nonpen} such that condition \eqref{eq:alg_nonpen_fcond} is satisfied.
\end{lemma}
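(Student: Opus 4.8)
The plan is to show that the backtracking loop in Step 2 of Algorithm \ref{alg:alg_nonpen} terminates after finitely many doublings. First I would fix an iterate $(u_k,w_k)$ and observe that, since $f'$ is Lipschitz continuous on bounded sets by Assumption \ref{ass:f'}, there exist $\rho>0$ and $M>0$ such that $\|f'(u_k)-f'(v)\|_U \le M\|v-u_k\|_U$ for all $v\in B_\rho(u_k)$. By Lemma \ref{tm:descent_lemma}, this yields the descent-type estimate
\[
 f(v)\le f(u_k)+f'(u_k)(v-u_k)+\frac{M}{2}\|v-u_k\|_U^2 \qquad \forall v\in B_\rho(u_k).
\]
Comparing with the required condition \eqref{eq:alg_nonpen_fcond}, it suffices to guarantee two things simultaneously: that the solution $u_{k+1}$ of the subproblem \eqref{eq:minprob_algo_nonpen} associated with a trial value $L_k$ lies in $B_\rho(u_k)$, and that $L_k\ge M/2$ (so that $\frac{M}{2}\|u_{k+1}-u_k\|_U^2\le L_k\|u_{k+1}-u_k\|_U^2$). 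Both hold once $L_k$ is large enough, which the geometric search $\{Lb^l:l\ge0\}$ eventually reaches.

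The key quantitative step is the first point. For a trial parameter $\Lambda\ge 0$, let $(u_\Lambda,w_\Lambda)$ denote the unique solution of \eqref{eq:minprob_algo_nonpen} with $L_k$ replaced by $\Lambda$; existence and uniqueness follow from strict convexity and coercivity as already noted in the text. Since $(u_k,w_k)\in K$ is a feasible competitor, comparing objective values gives $\varphi_k(u_\Lambda,w_\Lambda)\le \varphi_k(u_k,w_k)$, i.e.
\[
 f'(u_k)(u_\Lambda-u_k)+\frac{\alpha}{2}\|u_\Lambda\|_U^2+\frac{\beta}{2}\|w_\Lambda\|_W^2+\gamma\!\int_\Omega \psi'_{\epsilon_k}(w_k^2)(w_\Lambda^2-w_k^2)\dd x+\frac{\Lambda}{2}\|u_\Lambda-u_k\|_U^2 \le \frac{\alpha}{2}\|u_k\|_U^2+\frac{\beta}{2}\|w_k\|_W^2.
\]
Using $\psi'_{\epsilon_k}\ge 0$ (from Lemma \ref{tm:psieps} (iv)), the $\gamma$-term is bounded below by $-\gamma\int_\Omega \psi'_{\epsilon_k}(w_k^2)w_k^2\dd x$, a constant depending only on $(u_k,w_k)$; dropping the nonnegative $\beta$-term and absorbing the linear term $f'(u_k)(u_\Lambda-u_k)$ via Young's inequality, one obtains an estimate of the form $\frac{\Lambda}{2}\|u_\Lambda-u_k\|_U^2\le C_k + \frac12\|f'(u_k)\|_U^2$ with $C_k$ independent of $\Lambda$. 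Hence $\|u_\Lambda-u_k\|_U^2 \le 2(C_k+\tfrac12\|f'(u_k)\|_U^2)/\Lambda \to 0$ as $\Lambda\to\infty$, so for all $\Lambda$ above some threshold $\Lambda_k$ we have $u_\Lambda\in B_\rho(u_k)$.

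Putting the pieces together: choose $l$ large enough that $Lb^l \ge \max(M/2,\Lambda_k)$. For this trial value $L_k=Lb^l$, the subproblem solution $u_{k+1}$ satisfies $u_{k+1}\in B_\rho(u_k)$, so the descent estimate from Lemma \ref{tm:descent_lemma} applies, and since $L_k\ge M/2$ condition \eqref{eq:alg_nonpen_fcond} follows. The smallest element of $\{Lb^l:l\ge0\}$ with this property is therefore well-defined, which proves the claim. The main obstacle is the uniform-in-$\Lambda$ bound on $\|u_\Lambda-u_k\|_U$: one must be careful that the $\gamma$-term involving $\psi'_{\epsilon_k}(w_k^2)$ does not destroy the estimate, which is why its nonnegativity and the fact that it only couples $w$ (not $u$) are used, together with the observation that $\frac{\Lambda}{2}\|u_\Lambda-u_k\|_U^2$ is the only $\Lambda$-dependent term and all others are bounded independently of $\Lambda$ by the feasibility of $(u_k,w_k)$.
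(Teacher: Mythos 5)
Your proposal is correct and follows essentially the same route as the paper's proof: compare the subproblem's objective value at its solution with the feasible competitor $(u_k,w_k)$, deduce via Young's inequality that $\|u_\Lambda-u_k\|_U^2=O(1/\Lambda)$, and then invoke the local Lipschitz continuity of $f'$ together with Lemma \ref{tm:descent_lemma} to satisfy \eqref{eq:alg_nonpen_fcond} once $L_k\ge\max(\Lambda_k,M/2)$. The only cosmetic difference is that you bound the $\gamma$-term below using the pointwise nonnegativity of $\psi'_{\epsilon_k}$, whereas the paper uses the concavity estimate $\psi_{\epsilon_k}(w_k^2)+\psi'_{\epsilon_k}(w_k^2)(z^2-w_k^2)\ge\psi_{\epsilon_k}(z^2)\ge0$; both yield a $\Lambda$-independent constant.
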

\begin{proof}
    The proof can be done as in \cite[Lemma 7.2]{sparse}. Let $(v_{n,k},z_{n,k})$ be the solution of \begin{multline*}
        \underset{(u,w) \in K}{\min} f(u_k) + f'(u_k)(u-u_k) + \frac{\alpha}{2}\|u\|_U^2 + \frac{\beta}{2}\|w\|^2_W \\+ \gamma \int_{\Omega}\psi_{\epsilon_k}(w_k^2)+\psi'_{\epsilon_k}(w_k^2)(w^2-w_k^2) \dd x + \frac{n}{2}\|u-u_k\|_U^2
    \end{multline*} for $n\in \N$.
    By definition of $(v_{n,k},z_{n,k})$ it holds \begin{multline*}
        f(u_k) + \frac{\alpha}{2}\|u_k\|_U^2 + \frac{\beta}{2}\|w_k\|^2_W+ \gamma \int_{\Omega}\psi_{\epsilon_k}(w_k^2) \\ \geq f(u_k) + f'(u_k)(v_{n,k}-u_k) + \frac{\alpha}{2}\|v_{n,k}\|_U^2 + \frac{\beta}{2}\|z_{n,k}\|^2_W \\+ \gamma \int_{\Omega}\psi_{\epsilon_k}(w_k^2)+\psi'_{\epsilon_k}(w_k^2)(z_{n,k}^2-w_k^2) \dd x + \frac{n}{2}\|v_{n,k}-u_k\|_U^2.
    \end{multline*}
    Concavity and non-negativity of $\psi_{\epsilon}$ yield
    \[
    \psi_{\epsilon_k}(w_k^2)+\psi'_{\epsilon_k}(w_k^2)(z_{n,k}^2-w_k^2) \geq \psi_{\epsilon_k}(z_{n,k}^2) \geq 0,
    \]
    and we obtain \begin{align}\label{eq:ineq_for_lk_bounded}\begin{split}
        \frac{\alpha}{2}\|u_k\|_U^2 + \frac{\beta}{2}\|w_k\|^2_W+ \gamma \int_{\Omega}\psi_{\epsilon_k}(w_k^2) &\geq  f'(u_k)(v_{n,k}-u_k) + \frac{n}{2}\|v_{n,k}-u_k\|_U^2 \\ &\geq \frac{n}{4}\|v_{n,k}-u_k\|_U^2 - \frac1{n} \|f'(u_k)\|_{U}^2, \end{split}
    \end{align}
	using a scaled version of Young's inequality.
	This shows $v_{n,k}\to u_k$ in $U$ for $n\to \infty$.
    By local Lipschitz continuity of $f'$ from Assumption \ref{ass:f'}, there are $M>0$ and $N>0$ such that for all $n>N$ we have
    \begin{align*}
        f(v_{n,k})\leq f(u_k) + f'(u_k)(v_{n,k}-u_k) + \frac{M}{2}\|v_{n,k}-u_k\|_U^2
    \end{align*} by Lemma \ref{tm:descent_lemma}.
    Hence, condition \eqref{eq:alg_nonpen_fcond} is satisfied for $L_k \ge \max(N,\frac M2)$.
\end{proof}

\begin{lemma}\label{tm:lk_bounded_nonpenalgo}
	If the sequence $(u_k,w_k)$ of iterates of Algorithm \ref{alg:alg_nonpen} is bounded in $U\times W$, then the sequence $(L_k)$ is bounded.
\end{lemma}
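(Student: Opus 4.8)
The plan is to show that the acceptance test \eqref{eq:alg_nonpen_fcond} is automatically satisfied as soon as the trial value $\ell\in\{Lb^l:l\ge0\}$ exceeds a threshold that does \emph{not} depend on $k$; since $L_k$ is the smallest trial value passing the test, it then inherits a $k$-independent bound. The argument parallels the proof of Lemma \ref{tm:algoWelldef} and closes with the descent Lemma \ref{tm:descent_lemma}.

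First I would record the uniform bounds that the boundedness assumption provides. Since $(u_k)$ is bounded in $U$ and $f'$ is Lipschitz on bounded sets by Assumption \ref{ass:f'}, the sequence $(f'(u_k))$ is bounded in $U$. Since $(w_k)$ is bounded in $W$, it is bounded in $L^p(\Omega)$ via $W\hookrightarrow L^2(\Omega)\hookrightarrow L^p(\Omega)$ ($\Omega$ bounded, $p<2$); together with $\epsilon_k\le\epsilon_0$ and the explicit form of $\psi_{\epsilon_k}$ this gives a uniform bound on $G_{\epsilon_k}(w_k)$.

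Next, fix $k$ and a trial value $\ell\in\{Lb^l:l\ge0\}$, and let $(u^\ell,w^\ell)$ be the unique solution of \eqref{eq:minprob_algo_nonpen} with $L_k$ replaced by $\ell$. Comparing the minimal value $\varphi_k(u^\ell,w^\ell)$ with $\varphi_k(u_k,w_k)=\tfrac{\alpha}{2}\|u_k\|_U^2+\tfrac{\beta}{2}\|w_k\|_W^2$, using $(u_k,w_k)\in K$, and dropping the (nonnegative) squared norms of $u^\ell$ and $w^\ell$, I get
\[
f'(u_k)(u^\ell-u_k)+\tfrac{\ell}{2}\|u^\ell-u_k\|_U^2 \le \tfrac{\alpha}{2}\|u_k\|_U^2+\tfrac{\beta}{2}\|w_k\|_W^2+\gamma\int_\Omega \psi'_{\epsilon_k}(w_k^2)\bigl(w_k^2-(w^\ell)^2\bigr)\dd x .
\]
Because $\psi'_{\epsilon_k}\ge 0$ and $\psi'_{\epsilon_k}(t)\,t\le\psi_{\epsilon_k}(t)$ (a consequence of the concavity of $\psi_{\epsilon_k}$ and $\psi_{\epsilon_k}(0)\ge0$), the last term is bounded by $\gamma G_{\epsilon_k}(w_k)$. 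Absorbing $f'(u_k)(u^\ell-u_k)$ with Young's inequality and using $\ell\ge L$ then yields $\|u^\ell-u_k\|_U^2\le C/\ell\le C/L$, where $C$ depends only on the uniform bounds above, hence neither on $k$ nor on $\ell$. In particular, all the points $u^\ell$ that can ever occur lie in one fixed ball $B_R(0)$.

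Finally, let $M$ be a Lipschitz constant of $f'$ on this fixed ball (Assumption \ref{ass:f'}). Lemma \ref{tm:descent_lemma}, applied at $u_k$ with radius $\rho=\sqrt{C/L}$, gives $f(u^\ell)\le f(u_k)+f'(u_k)(u^\ell-u_k)+\tfrac{M}{2}\|u^\ell-u_k\|_U^2$, so the test \eqref{eq:alg_nonpen_fcond} holds for every trial value $\ell\ge M/2$. Consequently $L_k\le b\max(L,M/2)$ for all $k$, which proves the claim. The only point that needs care is the bookkeeping of dependencies: one must ensure that the ball containing all $u^\ell$, and therefore the Lipschitz constant $M$, is chosen independently of $k$ — and this is exactly what the $k$-independent estimate $\|u^\ell-u_k\|_U^2\le C/L$ delivers.
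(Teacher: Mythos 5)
Your proof is correct and follows essentially the same route as the paper: there, the $k$-uniform estimate $\|v_{n,k}-u_k\|_U^2\le R_2/n$ is obtained from inequality \eqref{eq:ineq_for_lk_bounded} (established in the proof of Lemma \ref{tm:algoWelldef}), the trial minimizers are then placed in the fixed ball $B_{R_1+1}(0)$, and the descent Lemma \ref{tm:descent_lemma} with the Lipschitz modulus on that ball shows that \eqref{eq:alg_nonpen_fcond} holds for all trial values above a $k$-independent threshold, exactly as you argue. The only cosmetic difference is that you re-derive the comparison inequality inline rather than citing it.
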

\begin{proof}
 Suppose $(u_k)$ and $(w_k)$ are bounded with $\max(\|u_k\|_U, \|w_k\|_W)\leq R_1$. By \eqref{eq:ineq_for_lk_bounded} there is $R_2>0$ such that $\|v_{n,k}-u_k\|_U^2 \leq R_2/n$,
and therefore $\|v_{n,k}\|_U \leq R_1 + 1$ for all $n>R_2$ and all $k$. Then condition \eqref{eq:alg_nonpen_fcond} is satisfied for $L_k\geq \max(R_2,\frac{M}{2})$
with $M$ being the Lipschitz modulus of $f'$ on $B_{R_1+1}(0)$. Thus, $L_k \leq \max(R_2,\frac{M}{2})b$ by the choice of $L_k$ in Algorithm \ref{alg:alg_nonpen}.
\end{proof}

\begin{lemma}\label{lm:mon_phi_nonpen}
For a sequence $(u_k,w_k,L_k)$ generated by Algorithm \ref{alg:alg_nonpen}
it holds  \begin{multline*}
    \Phi_{\epsilon_{k+1}}(u_{k+1},w_{k+1})+\frac{\alpha}{2}\|u_{k+1}-u_k\|_U^2+\frac{\beta}{2}\|w_{k+1}-w_k\|_W^2+\gamma\int_{\Omega}\psi'_{\epsilon_k}(w_k^2)(w_{k+1}-w_k)^2\dd x \\ \leq \Phi_{\epsilon_k}(u_k,w_k),
\end{multline*}
where $\Phi_\epsilon$ was defined in \eqref{eq:phi_def}.

The sequences $(u_k)$, $(w_k)$, and $(f'(u_k))$ are bounded in $U$, $W$ and $U$, respectively.
In addition, $\|u_{k+1}-u_k\|_U \to 0$ and $\|w_{k+1}-w_k\|_W \to 0$ for $k \to\infty$.
\end{lemma}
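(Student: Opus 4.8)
The inequality is the monotonicity of the objective along the iterates, and it is the engine that drives all the boundedness statements. The plan is to first establish the displayed chain of inequalities, then read off the consequences. For the first part, I would compare $\varphi_k$ at the new iterate $(u_{k+1},w_{k+1})$ with $\varphi_k$ at the old iterate $(u_k,w_k)$, which is admissible for subproblem \eqref{eq:minprob_algo_nonpen} since $(u_k,w_k)\in K$. Since $\varphi_k$ is strictly convex and $(u_{k+1},w_{k+1})$ is its unique minimizer, one gets not just $\varphi_k(u_{k+1},w_{k+1}) \le \varphi_k(u_k,w_k)$, but the stronger estimate with the quadratic gap: testing the optimality condition \eqref{eq:oc_nonpen_alg_ineq} with $(v,z)=(u_k,w_k)$ and rearranging (using $\alpha$-strong convexity in $u$, $\beta$-strong convexity in $w$, and the $\gamma\int \psi'_{\epsilon_k}(w_k^2)(\cdot)^2$ term coming from expanding $w_{k+1}^2 - w_k^2$ against the linear test term) yields
\[
\varphi_k(u_{k+1},w_{k+1}) + \frac{\alpha}{2}\|u_{k+1}-u_k\|_U^2 + \frac{\beta}{2}\|w_{k+1}-w_k\|_W^2 + \gamma\int_\Omega \psi'_{\epsilon_k}(w_k^2)(w_{k+1}-w_k)^2 \dd x \le \varphi_k(u_k,w_k).
\]
Alternatively, and perhaps more cleanly, one expands $\varphi_k(u_k,w_k) - \varphi_k(u_{k+1},w_{k+1})$ directly by completing the square in each variable.

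**From $\varphi_k$ to $\Phi_{\epsilon_k}$.** The next step is to translate the $\varphi_k$-inequality into the claimed $\Phi_{\epsilon_k}$-inequality. On the left I would use condition \eqref{eq:alg_nonpen_fcond}, which controls $f(u_{k+1})$ from above by $f(u_k) + f'(u_k)(u_{k+1}-u_k) + L_k\|u_{k+1}-u_k\|_U^2$; note $\varphi_k(u_{k+1},w_{k+1})$ already carries the terms $f'(u_k)(u_{k+1}-u_k) + \frac{L_k}{2}\|u_{k+1}-u_k\|_U^2$ plus $\frac{\alpha}{2}\|u_{k+1}\|_U^2 + \frac{\beta}{2}\|w_{k+1}\|_W^2 + \gamma\int\psi'_{\epsilon_k}(w_k^2)(w_{k+1}^2-w_k^2)\dd x$. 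For the $\psi$ term I would invoke concavity of $\psi_{\epsilon_k}$ to get $\psi_{\epsilon_k}(w_{k+1}^2) \le \psi_{\epsilon_k}(w_k^2) + \psi'_{\epsilon_k}(w_k^2)(w_{k+1}^2-w_k^2)$, so that $\gamma G_{\epsilon_k}(w_{k+1}) \le \gamma\int\psi_{\epsilon_k}(w_k^2)\dd x + \gamma\int\psi'_{\epsilon_k}(w_k^2)(w_{k+1}^2-w_k^2)\dd x$. Assembling these, and using $\varphi_k(u_k,w_k) = f'(u_k)\cdot 0 + \frac{\alpha}{2}\|u_k\|_U^2 + \frac{\beta}{2}\|w_k\|_W^2 + 0 + 0$ together with $\Phi_{\epsilon_k}(u_k,w_k) = f(u_k) + \tfrac{\alpha}{2}\|u_k\|_U^2 + \tfrac\beta2\|w_k\|_W^2 + \gamma G_{\epsilon_k}(w_k)$, one recovers $\Phi_{\epsilon_k}(u_{k+1},w_{k+1})$ on the left (here the $L_k$ bookkeeping must cancel: $\tfrac{L_k}{2}$ from $\varphi_k$ plus the extra $\tfrac{L_k}{2}$ one can peel off using \eqref{eq:alg_nonpen_fcond} combine to absorb what is needed — I would be careful to check the constants line up, since \eqref{eq:alg_nonpen_fcond} has $L_k$ not $\tfrac{L_k}{2}$). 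Finally, $\Phi_{\epsilon_{k+1}}(u_{k+1},w_{k+1}) \le \Phi_{\epsilon_k}(u_{k+1},w_{k+1})$ because $\epsilon_{k+1} \le \epsilon_k$ and $\epsilon\mapsto\psi_\epsilon(t)$ is monotonically increasing by Lemma \ref{tm:psieps} (ii), so $G_{\epsilon_{k+1}} \le G_{\epsilon_k}$ pointwise.

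**Consequences.** The monotonicity inequality shows $\Phi_{\epsilon_k}(u_k,w_k)$ is nonincreasing in $k$, hence bounded above by $\Phi_{\epsilon_0}(u_0,w_0)$. Since $f$ is bounded below by an affine functional (Assumption \ref{ass:f}), $\Phi_{\epsilon_k}(u_k,w_k) \ge g(u_k) + c + \tfrac\alpha2\|u_k\|_U^2 + \tfrac\beta2\|w_k\|_W^2 + \gamma G_{\epsilon_k}(w_k) \ge g(u_k)+c+\tfrac\alpha2\|u_k\|_U^2$, and a Young/Cauchy–Schwarz estimate on $g(u_k)$ gives a lower bound quadratic in $\|u_k\|_U$ with positive leading coefficient; combined with the upper bound this forces $(u_k)$ bounded in $U$. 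Boundedness of $(u_k)$ then gives boundedness of $(w_k)$ in $W$ (the $\tfrac\beta2\|w_k\|_W^2$ term is controlled) and boundedness of $(f'(u_k))$ in $U$ — either from complete continuity (Assumption \ref{ass:f'}) applied along weakly convergent subsequences, or more simply from local Lipschitz continuity of $f'$ on the bounded set containing $(u_k)$. For the telescoping consequence: summing the monotonicity inequality over $k=0,\dots,N-1$, the $\Phi$-terms telescope, leaving $\sum_{k=0}^{N-1}\big(\tfrac\alpha2\|u_{k+1}-u_k\|_U^2 + \tfrac\beta2\|w_{k+1}-w_k\|_W^2 + \gamma\int\psi'_{\epsilon_k}(w_k^2)(w_{k+1}-w_k)^2\big) \le \Phi_{\epsilon_0}(u_0,w_0) - \Phi_{\epsilon_N}(u_N,w_N) \le \Phi_{\epsilon_0}(u_0,w_0) - (\text{lower bound})$, which is finite; hence the series converges and in particular $\|u_{k+1}-u_k\|_U \to 0$ and $\|w_{k+1}-w_k\|_W \to 0$.

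**Main obstacle.** I expect the delicate point to be the constant bookkeeping around $L_k$ in the step from $\varphi_k$ to $\Phi_{\epsilon_k}$: the subproblem carries $\tfrac{L_k}{2}\|u-u_k\|_U^2$ while the sufficient-decrease condition \eqref{eq:alg_nonpen_fcond} carries $L_k\|u_{k+1}-u_k\|_U^2$, so one must verify that after combining, the residual $\|u_{k+1}-u_k\|_U^2$-term that appears on the left of the final inequality has exactly coefficient $\tfrac\alpha2$ (and not something smaller that would need to borrow from the $L_k$ terms). A secondary subtlety is that the $\gamma\int\psi'_{\epsilon_k}(w_k^2)(w_{k+1}-w_k)^2\dd x$ term on the left must come out of the strict-convexity gap in $w$ for subproblem \eqref{eq:minprob_algo_nonpen} — this works because the quadratic part of $\varphi_k$ in $w$ is $\tfrac\beta2\|w\|_W^2 + \gamma\int\psi'_{\epsilon_k}(w_k^2)w^2\dd x$, whose associated norm-squared difference is precisely $\tfrac\beta2\|w_{k+1}-w_k\|_W^2 + \gamma\int\psi'_{\epsilon_k}(w_k^2)(w_{k+1}-w_k)^2\dd x$, so this should fall out cleanly once the right expansion is written down.
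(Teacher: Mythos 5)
Your proposal is correct and follows essentially the same route as the paper: test the variational inequality \eqref{eq:oc_nonpen_alg_ineq} with $(u_k,w_k)\in K$, expand via $a(a-b)=\tfrac12((a-b)^2+a^2-b^2)$ to extract the quadratic gaps, add the descent condition \eqref{eq:alg_nonpen_fcond}, and invoke concavity of $\psi_{\epsilon_k}$ together with monotonicity of $\epsilon\mapsto\psi_\epsilon(t)$; the consequences then follow from the affine lower bound on $f$, the Lipschitz property of $f'$, and telescoping. The $L_k$ bookkeeping you flag as the main obstacle is in fact unproblematic: testing the variational inequality (rather than comparing objective values) produces the full term $-L_k\|u_{k+1}-u_k\|_U^2$, which cancels exactly against the $+L_k\|u_{k+1}-u_k\|_U^2$ from \eqref{eq:alg_nonpen_fcond}, leaving the coefficient $\tfrac{\alpha}{2}$ intact.
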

\begin{proof}
The proof works mainly as the one of \cite[Lemma 7.3]{sparse}.
    We test the optimality condition \eqref{eq:oc_nonpen_alg_ineq} with $(u_k,w_k)\in K$ to obtain \begin{multline*}
        f'(u_k)(u_k-u_{k+1}) + \alpha (u_{k+1},u_k-u_{k+1})_U + \beta(w_{k+1},w_k-w_{k+1})_W \\+ 2 \gamma \int_{\Omega}\psi'_{\epsilon_k}(w_k^2)w_{k+1}(w_k-w_{k+1}) \dd x + L_k(u_{k+1}-u_k,u_k-u_{k+1})_U\geq 0.
    \end{multline*}
    Applying the formula $a(a-b)=\frac12((a-b)^2+a^2-b^2)$ yields
    \begin{multline*}
        f'(u_k)(u_k-u_{k+1}) - \frac{\alpha}{2} ( \|u_{k+1}\|_U^2 + \|u_{k+1}-u_k\|_U^2 -\|u_k\|_U^2) \\
        - \frac{\beta}{2}( \|w_{k+1}\|_W^2 + \|w_{k+1}-w_k\|_W^2-\|w_k\|_W^2)\\ -  \gamma \int_{\Omega}\psi'_{\epsilon_k}(w_k^2)(w_{k+1}^2 + (w_{k+1}-w_k)^2 -w_k^2) \dd x - L_k\|u_{k+1}-u_k\|_U^2 \geq 0.
    \end{multline*}
    Adding inequality \eqref{eq:alg_nonpen_fcond},
    rearranging, using concavity of $t\mapsto \psi_{\epsilon_k}(t)$ and monotonicity of the sequence $(\epsilon_k)$ leads to
    \begin{align*}
        \Phi_{\epsilon_k}(u_k,w_k) & = f(u_k)+ \frac{\alpha}{2} \|u_k\|_U^2 + \frac{\beta}{2} \|w_k\|_W^2  +  \gamma \int_{\Omega}\psi_{\epsilon_k}(w_k^2) \dd x \\
        &\geq f(u_{k+1}) + \frac{\alpha}{2} \|u_{k+1}\|_U^2 + \frac{\beta}{2} \|w_{k+1}\|_W^2 + \gamma \int_{\Omega} \psi_{\epsilon_k}(w_{k+1}^2) + \frac{\alpha}{2} \|u_{k+1}-u_k\|_U^2 \\
        &\quad + \frac{\beta}{2}  \|w_{k+1}-w_k\|_W^2  + \gamma \int_{\Omega} \psi'_{\epsilon_k}(w_k^2) (w_{k+1}-w_k)^2 \\
        &\geq \Phi_{\epsilon_{k+1}}(u_{k+1},w_{k+1}) + \frac{\alpha}{2} \|u_{k+1}-u_k\|_U^2 + \frac{\beta}{2} \|w_{k+1}-w_k\|_W^2 \\ &+ \gamma \int_{\Omega} \psi'_{\epsilon_k}(w_k^2) (w_{k+1}-w_k)^2 ,
    \end{align*}
    which is the first claim. 
    Then the sequence $( \Phi_{\epsilon_k}(u_k,w_k) )$ is monotonically decreasing.
    Due to Assumption \ref{ass:f}, boundedness of $(u_k)$ and $(w_k)$ in $U$ and $W$, respectively, follows.
    Boundedness of $f'(u_k)$ in $U$ then follows from boundedness of $(u_k)$ and the Lipschitz assumption on $f'$ in Assumption \ref{ass:f'}.
	Summing up the inequality above yields
	\[
		\sum_{k=1}^{\infty} \frac{\alpha}{2}\|u_{k+1}-u_k\|_U^2+\frac{\beta}{2}\|w_{k+1}-w_k\|_W^2+\gamma\int_{\Omega}\psi'_{\epsilon_k}(w_k^2)(w_{k+1}-w_k)^2\dd x < \infty
	\]
	as in \cite[Corollary 7.6]{sparse}, which implies $(u_{k+1}-u_k)\to 0$ in $U$ and $(w_{k+1}-w_k) \to 0$ in $W$.
\end{proof}

By Lemma \ref{tm:lk_bounded_nonpenalgo} and boundedness of the sequence $(u_k,w_k)$, also $(L_k)$ is bounded. \\
Let $\lambda_k \in W^*$ defined by $\braket{\lambda_k,z}_{W}=2 \gamma \int_{\Omega}\psi'_{\epsilon_k}(w_k^2)w_{k+1}z \dd x$ denote the multiplier of the smoothed $L^p$-pseudonorm.
Now, we want to pass to the limit in the optimality condition for the subproblem of the algorithm.
Due to the lack of boundedness of the multipliers $\lambda_k$, we cannot do this directly as in \cite{sparse}.
Therefore, we show that $\lambda_k$ is bounded in $ W^*$ with respect to $k$
by penalizing the constrained minimization problem \eqref{eq:minprob_algo_nonpen}.
Let us fix $k$ and take a penalty parameter $\delta>0$.
Then we consider the unconstrained problem

\begin{equation} \label{eq:minprob_algo_nonpen_pen}
     \underset{(u,w) \in U\times W}{\min}    \varphi_k(u,w)
     + \frac1{2\delta} \|(w-u)_-\|_U^2 + \frac1{2\delta} \|(w+u)_-\|_U^2.
    \end{equation}
Due to coercivity and strong convexity, problem \eqref{eq:minprob_algo_nonpen_pen} has a unique solution $(u_k^{\delta},w_k^{\delta})$.
Taking the directional derivative of the objective of problem \eqref{eq:minprob_algo_nonpen_pen} above with similar arguments as in the proof of Lemma \ref{tm:oc} leads to the optimality
condition
\begin{multline}\label{eq:oc_nonpenalg_pen}
     f'(u_k)v+\alpha(u_k^{\delta},v)_U + \beta (w_k^{\delta},z)_W + 2\gamma\int_{\Omega}\psi_{\epsilon_k}'(w_k^2)w_k^{\delta}z\dd x + L_k(u_k^{\delta}-u_k,v)_U \\+ \frac1{\delta}( (w_k^{\delta}-u_k^{\delta})_-,z-v)_U+\frac1{\delta}( (w_k^{\delta}+u_k^{\delta})_-, z+v)_U = 0 \quad \forall (v,z) \in U \times W.
\end{multline}
Let us show that $w_k^{\delta}\geq 0$ analogously to Lemma \ref{tm:wepsgeq0}.

\begin{lemma}\label{tm:wkGeq02}
    Let $(u_k^{\delta},w_k^{\delta})$ be a solution of the penalized problem \eqref{eq:minprob_algo_nonpen_pen}. Then it holds $w_k^{\delta}\geq 0$.
\end{lemma}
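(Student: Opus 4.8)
The plan is to mimic the proof of Lemma \ref{tm:wepsgeq0} with the penalized minimization problem \eqref{eq:minprob_algo_nonpen_pen} in place of \eqref{eq:auxprob}. First I would set $w_k^{\delta,-} := (w_k^{\delta})_-$ and test the optimality condition \eqref{eq:oc_nonpenalg_pen} with the pair $(v,z) = (0, w_k^{\delta,-})$, which yields
\begin{multline*}
 \beta (w_k^{\delta}, w_k^{\delta,-})_W + 2\gamma\int_{\Omega}\psi_{\epsilon_k}'(w_k^2) w_k^{\delta} w_k^{\delta,-}\dd x
 \\
 + \frac1{\delta}\bigl( (w_k^{\delta}-u_k^{\delta})_- + (w_k^{\delta}+u_k^{\delta})_- ,\ w_k^{\delta,-}\bigr)_U = 0.
\end{multline*}
Then I would argue that each summand is non-negative, so all of them must vanish.

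The key observations are: by Lemma \ref{lm:inequPosPart} applied to $w = w_k^{\delta}$ we get $(w_k^{\delta}, w_k^{\delta,-})_W = (w_k^{\delta}, w_k^{\delta})_W - (w_k^{\delta}, (w_k^{\delta})_+)_W \ge 0$; the term $\psi_{\epsilon_k}'(w_k^2) \ge 0$ by the formula in Lemma \ref{tm:psieps} (iv), and $w_k^{\delta} w_k^{\delta,-} = (w_k^{\delta,-})^2 \ge 0$ pointwise, so the $\gamma$-term is non-negative; finally $(w_k^{\delta} \pm u_k^{\delta})_- \le 0$ and $w_k^{\delta,-} \le 0$, so their $U$-inner product is non-negative as well. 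Since the sum of non-negative quantities is zero, every summand is zero. Unlike the proof of Lemma \ref{tm:wepsgeq0}, there is no $(w_k^{\delta} - \bar w, w_k^{\delta,-})_{L^2(\Omega)}$ term here, so I would extract the conclusion directly from the $\gamma$-term (or, if $\gamma$ alone is not enough pointwise, from the structure $w_k^{\delta} w_k^{\delta,-} = (w_k^{\delta,-})^2$): from $\int_\Omega \psi_{\epsilon_k}'(w_k^2)(w_k^{\delta,-})^2 \dd x = 0$ and $\psi_{\epsilon_k}' > 0$ (strictly positive since $\psi_{\epsilon_k}'(t) = \tfrac{p}{2}\min(\epsilon_k^{p-2}, t^{(p-2)/2}) > 0$ for $\epsilon_k > 0$), it follows that $w_k^{\delta,-} = 0$ a.e., hence $w_k^{\delta} \ge 0$ a.e. on $\Omega$.

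The main point to be careful about is the strict positivity of $\psi_{\epsilon_k}'$: for $\epsilon_k > 0$ one indeed has $\psi_{\epsilon_k}'(t) \ge \tfrac{p}{2}\epsilon_k^{p-2} > 0$ for all $t \ge 0$, so the pointwise factor never degenerates and the argument from the $\gamma$-term is clean; I would simply note this and invoke Lemma \ref{tm:psieps} (iv). There is no real obstacle here beyond transcribing the case analysis; the proof is essentially identical to that of Lemma \ref{tm:wepsgeq0}, only with $L_k$ and $\delta$ replacing the respective parameters and with the quadratic lower-order term absent. I would keep the write-up short and refer back to Lemma \ref{tm:wepsgeq0} for the analogous steps.
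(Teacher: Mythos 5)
Your proof is correct and follows the paper's route: the paper tests \eqref{eq:oc_nonpenalg_pen} with $(0,w_k^{\delta})$ and $(0,(w_k^{\delta})_+)$ and subtracts, which, since the optimality condition is linear in $z$, is exactly your single test with $(0,(w_k^{\delta})_-)$; the sign analysis of the three surviving terms and the conclusion via the $\gamma$-term are the same. One correction to your justification: the inequality $\psi_{\epsilon_k}'(t)\ge \tfrac{p}{2}\epsilon_k^{p-2}$ is backwards --- since $\psi_{\epsilon_k}'(t)=\tfrac{p}{2}\min(\epsilon_k^{p-2},t^{(p-2)/2})$, the quantity $\tfrac{p}{2}\epsilon_k^{p-2}$ is an upper bound, and $\psi_{\epsilon_k}'(t)\to 0$ as $t\to\infty$, so there is no uniform positive lower bound. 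What you actually need (and have) is only the pointwise strict positivity $\psi_{\epsilon_k}'(w_k(x)^2)>0$ for a.a.\ $x$, which holds because $w_k(x)$ is finite almost everywhere; this already forces $((w_k^{\delta})_-)^2=0$ a.e.\ from $\int_{\Omega}\psi_{\epsilon_k}'(w_k^2)\,((w_k^{\delta})_-)^2\dd x=0$, so the proof goes through unchanged.
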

\begin{proof} The proof works similarly as in Lemma \ref{tm:wepsgeq0} by testing the optimality condition \eqref{eq:oc_nonpenalg_pen} with $(0,w_k^{\delta})$ and $(0,(w_k^{\delta})_+)$ and using $\int_{\Omega}\psi_{\epsilon_k}'(w_k^2)w_k^{\delta}(w_k^{\delta})_+ \dd x \le \int_{\Omega}\psi_{\epsilon_k}'(w_k^2)(w_k^{\delta})^2\dd x$.
\end{proof}

\begin{lemma}
    Let $\delta_n\searrow 0$ with $\delta_n>0$ for all $n\in \N$. Then the solutions $(u_k^n,w_k^n)$ of problem \eqref{eq:minprob_algo_nonpen_pen} with parameter $\delta_n$ are bounded independently of $k$ and $n$.
\end{lemma}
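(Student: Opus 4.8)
The plan is to obtain a uniform bound by comparing the optimal value of the penalized problem \eqref{eq:minprob_algo_nonpen_pen} with a fixed admissible competitor, and then exploiting coercivity coming from the $\alpha$- and $\beta$-terms together with the bounds on the iterates $(u_k,w_k)$ already established. Concretely, for fixed $k$ and $n$ the pair $(u_k,w_k)\in K$ is admissible for \eqref{eq:minprob_algo_nonpen}, hence a feasible point for the unconstrained penalized problem \eqref{eq:minprob_algo_nonpen_pen} at which the two penalty terms vanish. Therefore
\[
 \varphi_k(u_k^n,w_k^n) + \frac1{2\delta_n}\|(w_k^n-u_k^n)_-\|_U^2 + \frac1{2\delta_n}\|(w_k^n+u_k^n)_-\|_U^2 \le \varphi_k(u_k,w_k).
\]
First I would expand $\varphi_k$ using its definition, isolate the quadratic terms $\frac{\alpha}{2}\|u_k^n\|_U^2$ and $\frac{\beta}{2}\|w_k^n\|_W^2$, and estimate the remaining linear terms. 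The term $f'(u_k)(u-u_k)$ is linear in $u$ with coefficient $f'(u_k)$, which is bounded in $U$ uniformly in $k$ by Lemma \ref{lm:mon_phi_nonpen}. The term $\gamma\int_\Omega \psi'_{\epsilon_k}(w_k^2)(w^2-w_k^2)\dd x$ is linear in $w^2$; here I would use that $0\le \psi'_{\epsilon_k}(t)\le \frac{p}{2}\epsilon_k^{p-2}$ by Lemma \ref{tm:psieps} (iv) is \emph{not} uniformly bounded as $\epsilon_k\to 0$, so instead I would use $w_k^n\ge 0$ (Lemma \ref{tm:wkGeq02}) to note that $-\gamma\int_\Omega\psi'_{\epsilon_k}(w_k^2)(w_k^n)^2\dd x \le 0$, so this term only \emph{helps} on the left-hand side and can be dropped. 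The term $\frac{L_k}{2}\|u-u_k\|_U^2$ on the right-hand side is bounded uniformly in $k$ because $(L_k)$ and $(u_k)$ are bounded.

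So, after dropping the nonnegative penalty and $\psi'$-terms on the left and bounding everything on the right by a constant $c_1$ independent of $k$ and $n$, I obtain
\[
 \frac{\alpha}{2}\|u_k^n\|_U^2 + \frac{\beta}{2}\|w_k^n\|_W^2 \le c_1 + f'(u_k)(u_k^n-u_k),
\]
and then Young's inequality $f'(u_k)(u_k^n-u_k) \le \frac{\alpha}{4}\|u_k^n\|_U^2 + \frac{1}{\alpha}\|f'(u_k)\|_U^2 + \|f'(u_k)\|_U\|u_k\|_U$ absorbs the $u_k^n$ contribution into the left-hand side, yielding
\[
 \frac{\alpha}{4}\|u_k^n\|_U^2 + \frac{\beta}{2}\|w_k^n\|_W^2 \le c_2
\]
with $c_2$ independent of $k$ and $n$ since $(\|f'(u_k)\|_U)$ and $(\|u_k\|_U)$ are bounded. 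This gives the claimed uniform boundedness of $(u_k^n, w_k^n)$ in $U\times W$.

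The main obstacle I anticipate is handling the $\psi'_{\epsilon_k}$-term correctly: the naive bound $\psi'_{\epsilon_k}(t)\le \frac{p}{2}\epsilon_k^{p-2}$ blows up, so one must avoid using it on any term that could have the wrong sign. The resolution is that in $\varphi_k(u,w)$ this term enters as $\gamma\int_\Omega\psi'_{\epsilon_k}(w_k^2)(w^2-w_k^2)\dd x$; when comparing $\varphi_k(u_k^n,w_k^n)$ with $\varphi_k(u_k,w_k)$ the $-w_k^2$ part cancels and one is left with $\gamma\int_\Omega\psi'_{\epsilon_k}(w_k^2)(w_k^n)^2\dd x$ on the left, which is nonnegative by $\psi'_{\epsilon_k}\ge 0$ and so can simply be discarded. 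A secondary point to check is that $(u_k,w_k)$ is genuinely a valid competitor for \eqref{eq:minprob_algo_nonpen_pen}, i.e.\ that the penalty terms vanish there; this holds because $(u_k,w_k)\in K$ implies $|u_k|\le w_k$, hence $(w_k-u_k)_-=(w_k+u_k)_-=0$. Finally one should record that the constants depend only on the fixed data $\alpha,\beta,\gamma,L,b$, the Lipschitz modulus of $f'$ on the bounded set containing $(u_k)$, and the uniform bounds $R_1$ from Lemma \ref{lm:mon_phi_nonpen} and Lemma \ref{tm:lk_bounded_nonpenalgo} — all independent of both indices $k$ and $n$.
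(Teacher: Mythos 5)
Your overall strategy coincides with the paper's: compare the optimal value of \eqref{eq:minprob_algo_nonpen_pen} with the feasible competitor $(u_k,w_k)\in K$, at which the penalty terms and the linear parts of $\varphi_k$ vanish, and then extract boundedness from the coercive $\alpha$- and $\beta$-terms. Your treatment of $f'(u_k)(u_k^n-u_k)$ via Young's inequality and the uniform bounds on $(u_k)$, $(w_k)$, $(f'(u_k))$ and $(L_k)$ is fine.

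The gap is in the smoothed-$L^p$ term, which you correctly identify as the main obstacle but then resolve incorrectly. Writing out the comparison, the left-hand side contains
\[
\gamma\int_\Omega\psi'_{\epsilon_k}(w_k^2)\bigl((w_k^n)^2-w_k^2\bigr)\dd x
=\gamma\int_\Omega\psi'_{\epsilon_k}(w_k^2)(w_k^n)^2\dd x-\gamma\int_\Omega\psi'_{\epsilon_k}(w_k^2)w_k^2\dd x,
\]
while the corresponding term in $\varphi_k(u_k,w_k)$ on the right-hand side is identically zero. So nothing cancels: after discarding the nonnegative $(w_k^n)^2$-integral from the left, you are left with $+\gamma\int_\Omega\psi'_{\epsilon_k}(w_k^2)w_k^2\dd x$ on the right-hand side, and your constant $c_1$ silently absorbs it without any bound. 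As you yourself note, the only estimate you quote, $\psi'_{\epsilon_k}\le\frac p2\epsilon_k^{p-2}$, blows up as $\epsilon_k\searrow0$, so this is precisely the point where an argument is missing. The fix is either the pointwise bound $\psi'_\epsilon(t)\,t=\frac p2\min(\epsilon^{p-2},t^{(p-2)/2})\,t\le\frac p2\,t^{p/2}$, which gives $\gamma\int_\Omega\psi'_{\epsilon_k}(w_k^2)w_k^2\dd x\le\frac{\gamma p}{2}\int_\Omega|w_k|^p\dd x$, or, as the paper does, the concavity of $\psi_{\epsilon_k}$: $\psi'_{\epsilon_k}(w_k^2)((w_k^n)^2-w_k^2)\ge\psi_{\epsilon_k}((w_k^n)^2)-\psi_{\epsilon_k}(w_k^2)\ge-\psi_{\epsilon_k}(w_k^2)$. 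In either case the resulting integral is bounded uniformly in $k$ because $(w_k)$ is bounded in $W\hookrightarrow L^2(\Omega)$ and $\Omega$ is bounded, so that $\int_\Omega|w_k|^p\dd x\le|\Omega|^{1-p/2}\|w_k\|_{L^2(\Omega)}^p$ by Hölder. With that one estimate inserted your proof closes. (Incidentally, $w_k^n\ge0$ from Lemma \ref{tm:wkGeq02} is not needed for the sign of $\gamma\int_\Omega\psi'_{\epsilon_k}(w_k^2)(w_k^n)^2\dd x$; that already follows from $\psi'_{\epsilon_k}\ge0$ and $(w_k^n)^2\ge0$.)
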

\begin{proof}
    As $(u_k^n,w_k^n)$ is a minimum of \eqref{eq:minprob_algo_nonpen_pen}, it holds
    \begin{multline*}
          f'(u_k)(u_k^n-u_k) + \frac{\alpha}{2}\|u_k^n\|_U^2 + \frac{\beta}{2}\|w_k^n\|_W^2+ \gamma \int_{\Omega}\psi'_{\epsilon_k}(w_k^2)((w_k^n)^2-w_k^2) \dd x \\+ \frac{L_k}{2}\|u_k^n-u_k\|_U^2  + \frac1{2\delta_n} \|(w_k^n-u_k^n)_-\|_U^2 + \frac1{2\delta_n} \|(w_k^n+u_k^n)_-\|_U^2
          \\
           \le \frac{\alpha}{2}\|u_{k}\|_U^2 + \frac{\beta}{2}\|w_{k}\|_W^2,
          \end{multline*}
    since $(u_{k},w_{k})$ solves \eqref{eq:minprob_algo_nonpen} and    satisfies the constraint $(u_k,w_k) \in K$.
    Due to the boundedness of $(u_k) ,(w_k)$ from Lemma \ref{lm:mon_phi_nonpen}, Assumption \ref{ass:f'}
    and the concavity of $\psi_{\epsilon_k}$, the claim follows.
\end{proof}

\begin{lemma}\label{tm:boundedness_nonpen_pen}
    Let $\delta_n\searrow 0$ and let $(u_k^n,w_k^n)$ be the corresponding solutions of \eqref{eq:minprob_algo_nonpen_pen}. Then  $\frac1{\delta_n} \|(w_k^n-u_k^n)_-\|_U $ and $ \frac1{\delta_n} \|(w_k^n+u_k^n)_-\|_U $ are bounded both with respect to $n$ and $k$.
\end{lemma}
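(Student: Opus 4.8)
The plan is to transfer the argument of Lemma~\ref{tm:mult_bounded} to the penalized subproblem~\eqref{eq:minprob_algo_nonpen_pen}: test the optimality condition~\eqref{eq:oc_nonpenalg_pen} with a pair $(v,z)$ that isolates one of the two penalty terms, and then bound everything else by the Cauchy--Schwarz inequality, checking that all constants are uniform in $k$ and $n$.

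First I would fix $k$ and $n$, abbreviate the solution of~\eqref{eq:minprob_algo_nonpen_pen} with penalty parameter $\delta=\delta_n$ by $(u_k^n,w_k^n)$, and test~\eqref{eq:oc_nonpenalg_pen} with $z=0$ and $v=-(w_k^n-u_k^n)_-$. The key observation is that the mixed penalty term vanishes: since $w_k^n\ge 0$ almost everywhere by Lemma~\ref{tm:wkGeq02}, at any point $(t,x)$ the factor $(w_k^n-u_k^n)_-$ is nonzero only if $u_k^n(t,x)>w_k^n(x)\ge 0$, whereas $(w_k^n+u_k^n)_-$ is nonzero only if $u_k^n(t,x)<-w_k^n(x)\le 0$; the two parts have disjoint supports, so $\bigl((w_k^n+u_k^n)_-,\,(w_k^n-u_k^n)_-\bigr)_U=0$. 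After this cancellation the optimality condition reduces to
\[
  \frac1{\delta_n}\|(w_k^n-u_k^n)_-\|_U^2 = f'(u_k)(w_k^n-u_k^n)_- + \alpha\bigl(u_k^n,(w_k^n-u_k^n)_-\bigr)_U + L_k\bigl(u_k^n-u_k,(w_k^n-u_k^n)_-\bigr)_U ,
\]
and Cauchy--Schwarz (the bound being trivial if the left-hand side vanishes) yields
\[
  \frac1{\delta_n}\|(w_k^n-u_k^n)_-\|_U \le \|f'(u_k)\|_U + \alpha\|u_k^n\|_U + L_k\|u_k^n-u_k\|_U .
\]
Testing instead with $z=0$ and $v=(w_k^n+u_k^n)_-$ gives, by the same disjoint-support cancellation, the analogous estimate for $\tfrac1{\delta_n}\|(w_k^n+u_k^n)_-\|_U$.

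It then remains to bound the right-hand side independently of $k$ and $n$. By Lemma~\ref{lm:mon_phi_nonpen} the iterates $(u_k)$, $(w_k)$ of Algorithm~\ref{alg:alg_nonpen} are bounded, hence $(L_k)$ is bounded by Lemma~\ref{tm:lk_bounded_nonpenalgo}, and $(f'(u_k))$ is bounded in $U$ (also part of Lemma~\ref{lm:mon_phi_nonpen}, or directly from the Lipschitz assumption in Assumption~\ref{ass:f'}). The solutions $(u_k^n,w_k^n)$ are bounded independently of $k$ and $n$ by the preceding lemma, so $\|u_k^n\|_U$ and $\|u_k^n-u_k\|_U$ are uniformly bounded as well. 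Combining these gives the claim. The only slightly delicate point is the disjoint-support cancellation, but it is completely elementary once $w_k^n\ge 0$ is invoked; everything else is bookkeeping of already-established uniform bounds, so I do not anticipate a genuine obstacle — the lemma is essentially the penalized-algorithm analogue of Lemma~\ref{tm:mult_bounded} and its proof carries over almost verbatim.
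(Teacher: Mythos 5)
Your proposal is correct and follows essentially the same route as the paper: test \eqref{eq:oc_nonpenalg_pen} with $z=0$ and $v=\mp(w_k^n\mp u_k^n)_-$, use $w_k^n\ge 0$ from Lemma \ref{tm:wkGeq02} to kill the mixed penalty term, apply Cauchy--Schwarz, and invoke the already established uniform bounds on $(u_k)$, $(u_k^n)$, $(f'(u_k))$ and $(L_k)$. The only difference is that you spell out the disjoint-support cancellation explicitly, which the paper leaves implicit by referring back to Lemma \ref{tm:mult_bounded}.
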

\begin{proof}
    Testing \eqref{eq:oc_nonpenalg_pen} with $v=-(w_k^n-u_k^n)_-$ and $z=0$ yields as in Lemma \ref{tm:mult_bounded} \begin{align*}
        \frac1{\delta_n}\|(w_k^n-u_k^n)_-\|_U \leq \|f'(u_k)\|_{U} +\alpha\|u_k^n\|_U+L_k\|u_k^n-u_k\|_U.
    \end{align*} Here, we also used $w_k^n\geq 0$ from Lemma \ref{tm:wkGeq02}. The same can be done for $ \frac1{\delta_n} \|(w_k^n+u_k^n)_-\|_U $.
    Since we already know that the sequences $(u_k)$, $(u_k^n)$, $f'(u_k)$, and $(L_k)$ are bounded, this proves the claim.
\end{proof}

\begin{lemma}\label{tm:nonpenalg_conv_res}
    Let  $\delta_n \searrow 0$. Then it holds $u_k^n \to u_{k+1}$ in $U$ and $w_k^n\to w_{k+1}$ in $W$.
    Furthermore, if $\mu^1_k$ and $\mu_k^2$ are weak limits of subsequences of
    $\frac1{\delta_n} (w_k^n-u_k^n)_- $ and $ \frac1{\delta_n} (w_k^n+u_k^n)_-  $ in $U$, respectively,
    then $\mu_k^1 \in \partial I_C(w_{k+1}-u_{k+1})$ and $\mu_k^2 \in \partial I_C(w_{k+1}+u_{k+1})$
	for $C$ given by \eqref{eq_def_C}.
\end{lemma}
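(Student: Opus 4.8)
The plan is to carry out a standard exterior-penalty analysis, in the spirit of the proofs of Theorem~\ref{tm:conv_uw} and Theorem~\ref{thm_nec_opt_cond}. From the boundedness of the solutions $(u_k^n,w_k^n)$ shown above, we may extract a subsequence (not relabelled) with $u_k^n\rightharpoonup u^*$ in $U$ and $w_k^n\rightharpoonup w^*$ in $W$; by the compact embedding $W\hookrightarrow L^2(\Omega)$ of Lemma~\ref{tm:propFrac}~(i) we also have $w_k^n\to w^*$ in $L^2(\Omega)$. First I would verify that $(u^*,w^*)\in K$. By Lemma~\ref{tm:boundedness_nonpen_pen} the quantities $\tfrac1{\delta_n}\|(w_k^n\mp u_k^n)_-\|_U$ are bounded, so $\|(w_k^n\mp u_k^n)_-\|_U\to0$ because $\delta_n\to0$. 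Since $w_k^n\mp u_k^n-(w_k^n\mp u_k^n)_-$ lies in the closed convex, hence weakly closed, cone $\{v\in U:\ v\ge0\ \text{a.e.}\}$ and converges weakly to $w^*\mp u^*$, it follows that $w^*-u^*\ge0$ and $w^*+u^*\ge0$ a.e.\ on $I\times\Omega$, i.e.\ $(u^*,w^*)\in K$.

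Next I would identify $(u^*,w^*)=(u_{k+1},w_{k+1})$ and upgrade to strong convergence. Since $(u_k^n,w_k^n)$ minimizes \eqref{eq:minprob_algo_nonpen_pen} and the penalty terms vanish at the feasible point $(u_{k+1},w_{k+1})\in K$, we get
\[
\varphi_k(u_k^n,w_k^n)+\tfrac1{2\delta_n}\|(w_k^n-u_k^n)_-\|_U^2+\tfrac1{2\delta_n}\|(w_k^n+u_k^n)_-\|_U^2\le\varphi_k(u_{k+1},w_{k+1}).
\]
Each term of $\varphi_k$ is weakly lower semicontinuous on $U\times W$: the two squared norms trivially, the linear term $f'(u_k)(\,\cdot\,-u_k)$ is weakly continuous, and $w\mapsto\gamma\io\psi_{\epsilon_k}'(w_k^2)(w^2-w_k^2)\dd x$ is convex and continuous on $L^2(\Omega)$ because $\psi_{\epsilon_k}'(w_k^2)\ge0$; in fact this last term converges along the subsequence since $w_k^n\to w^*$ in $L^2(\Omega)$ and $\psi_{\epsilon_k}'(w_k^2)\in L^\infty(\Omega)$. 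Hence $\varphi_k(u^*,w^*)\le\liminf_n\varphi_k(u_k^n,w_k^n)\le\varphi_k(u_{k+1},w_{k+1})$, and since $(u^*,w^*)\in K$ and \eqref{eq:minprob_algo_nonpen} has a unique solution by strict convexity, $(u^*,w^*)=(u_{k+1},w_{k+1})$. The displayed inequality moreover shows that the $\limsup$ of the sum of all the summands (those of $\varphi_k$, evaluated along the subsequence, together with the two nonnegative penalty terms) is bounded above by the sum of their individual $\liminf$s; all summands are bounded below, so Lemma~\ref{tm:auxLimSupInf} applies and every summand converges. In particular $\|u_k^n\|_U\to\|u_{k+1}\|_U$ and $\|w_k^n\|_W\to\|w_{k+1}\|_W$, which combined with the weak convergence gives $u_k^n\to u_{k+1}$ in $U$ and $w_k^n\to w_{k+1}$ in $W$, and also $\tfrac1{\delta_n}\|(w_k^n\mp u_k^n)_-\|_U^2\to0$. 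As the limit does not depend on the chosen subsequence, the whole sequence converges.

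For the multiplier statement, set $\mu_k^{1,n}:=\tfrac1{\delta_n}(w_k^n-u_k^n)_-\le0$; a weakly convergent subsequence $\mu_k^{1,n}\rightharpoonup\mu_k^1$ in $U$ exists by Lemma~\ref{tm:boundedness_nonpen_pen}. For any $v\in C$, using $(\mu_k^{1,n},w_k^n-u_k^n)_U=\tfrac1{\delta_n}\|(w_k^n-u_k^n)_-\|_U^2$ and $\mu_k^{1,n}\le0\le v$,
\[
\big(\mu_k^{1,n},\ v-(w_k^n-u_k^n)\big)_U=(\mu_k^{1,n},v)_U-\tfrac1{\delta_n}\|(w_k^n-u_k^n)_-\|_U^2\le0.
\]
Passing to the limit using $w_k^n-u_k^n\to w_{k+1}-u_{k+1}$ in $U$ and $\mu_k^{1,n}\rightharpoonup\mu_k^1$ yields $\big(\mu_k^1,\ v-(w_{k+1}-u_{k+1})\big)_U\le0$ for all $v\in C$; since $(u_{k+1},w_{k+1})\in K$ gives $w_{k+1}-u_{k+1}\in C$, the characterization of $\partial I_C$ recalled before Theorem~\ref{thm_nec_opt_cond} shows $\mu_k^1\in\partial I_C(w_{k+1}-u_{k+1})$. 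The argument for $\mu_k^2$ is identical. I expect the only delicate point to be the bookkeeping in the term-by-term convergence via Lemma~\ref{tm:auxLimSupInf}, in particular controlling the $\gamma\io\psi_{\epsilon_k}'(w_k^2)w^2\dd x$ contribution and keeping the nonnegative penalty terms within the hypotheses of that lemma; the rest is routine.
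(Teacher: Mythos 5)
Your proposal is correct and follows essentially the same route as the paper's proof: feasibility of the weak limit via the vanishing penalty terms, identification with $(u_{k+1},w_{k+1})$ by strict convexity of \eqref{eq:minprob_algo_nonpen}, term-by-term convergence via Lemma \ref{tm:auxLimSupInf} to upgrade to strong convergence, and the same subgradient computation using $(\mu_k^{1,n},w_k^n-u_k^n)_U=\tfrac1{\delta_n}\|(w_k^n-u_k^n)_-\|_U^2$. The only cosmetic difference is that you also track the penalty terms inside the Lemma \ref{tm:auxLimSupInf} bookkeeping, which is harmless and yields the (unneeded) extra conclusion that $\tfrac1{\delta_n}\|(w_k^n\mp u_k^n)_-\|_U^2\to0$.
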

\begin{proof}
    Since $(u_k^n)$, $(w_k^n)$ are bounded, there is a weakly convergent subsequence also denoted by  $(u_k^n)$, $(w_k^n)$ such that $u_k^n \rightharpoonup \bar u_k$ and $w_k^n\rightharpoonup \Bar{w}_k$ for some $\bar u_k\in U$, $\Bar{w}_k\in W$. As $\frac1{\delta_n} \|(w_k^n-u_k^n)_-\|_U $ and $ \frac1{\delta_n} \|(w_k^n+u_k^n)_-\|_U $ are bounded by Lemma \ref{tm:boundedness_nonpen_pen} and $\delta_n \to 0$, it holds  $\|(w_k^n-u_k^n)_-\|_U \to 0$ and $\|(w_k^n+u_k^n)_-\|_U \to 0$. This shows that $(\bar u_k,\Bar{w}_k)\in K$ as in the proof of Theorem \ref{tm:conv_uw}.
    By optimality of $(u_k^n,w_k^n)$ in the penalized problem \eqref{eq:minprob_algo_nonpen_pen} it holds \begin{equation*}\begin{gathered}
         \varphi_k(u_{k+1},w_{k+1}) \geq \varphi_k(u_k^n,w_k^n) + \frac1{2\delta_n} \|(w_k^n-u_k^n)_-\|_U^2 + \frac1{2\delta_n} \|(w_k^n+u_k^n)_-\|_U^2.\end{gathered}\end{equation*}
         We can pass to  the limit superior for $n\to \infty$ in the inequality above to obtain
\begin{align}\begin{split}
		\label{eq:ineqConvUkWkAlgo}
         \varphi_k(u_{k+1},w_{k+1}) &\geq  \limsup_{n \to \infty}\left( \varphi_k(u_k^n,w_k^n) + \frac1{2\delta_n} \|(w_k^n-u_k^n)_-\|_U^2 + \frac1{2\delta_n} \|(w_k^n+u_k^n)_-\|_U^2 \right) \\
         &\geq \varphi_k(\bar u_k,\Bar{w}_k).
     \end{split}
    \end{align}
    Since the minimization problem \eqref{eq:minprob_algo_nonpen} is strictly convex, its solution is unique and therefore $(\bar u_k,\Bar{w}_k)=(u_{k+1},w_{k+1})$. This holds independently of the choice of the subsequence, so the whole sequence $(u_k^n,w_k^n)_n$ converges weakly to $(u_{k+1},w_{k+1})$.
    Using the lower semicontinuity of the individual terms in the definition of $\varphi_k$, we get
    \begin{multline*}
     f'(u_k)(u_{k+1}-u_k) + \liminf_{n\to\infty} \frac{\alpha}{2}\|u_k^n\|_U^2
     + \liminf_{n\to\infty}\frac{\beta}{2}\|w_k^n\|_W^2 \\
     \quad+  \liminf_{n\to\infty} \gamma \int_{\Omega}\psi'_{\epsilon_k}((w_k^2)(w_k^n)^2-w_k^2) \dd x
      + \liminf_{n\to\infty}\frac{L_k}{2}\|u_k^n -u_k\|_U^2 \\
     \ge  \varphi_k(u_{k+1},w_{k+1}) \ge  \limsup_{n \to \infty} \varphi_k(u_k^n,w_k^n),
    \end{multline*}
    where the last inequality is due to \eqref{eq:ineqConvUkWkAlgo}. Hence, the assumptions of Lemma \ref{tm:auxLimSupInf} are satisfied,
    and all the individual terms in the inequality above converge. In particular we have  $\|u_k^n\|_U^2 \to \|u_{k+1}\|_U^2$ and $\|w_k^n\|_W^2 \to \|w_{k+1}\|_W^2$. Thus, the sequences converge strongly in $U$ and $W$, respectively.\\
   	For the second statement, suppose we have $\frac1{\delta_{n_j}} (w_k^{n_j}-u_k^{n_j})_- \rightharpoonup \mu^1_k$ for $j\to\infty$.
   	Let $v\in C$, i.e. $0 \le v \in U$. By strong convergence of $(u_k^n,w_k^n)$, we get
   	\[
   	\begin{split}
   		(\mu_k^1,v-(w_{k+1} -u_{k+1}))_U &=  \lim_{j \to \infty} \frac1{\delta_{n_j}}( (w_k^{n_j}-u_k^{n_j})_-, v - (w_k^{n_j} - u_k^{n_j}))_U \\
   		&=  \lim_{j \to \infty} \frac1{\delta_{n_j}}
   		\left[ ((w_k^{n_j}-u_k^{n_j})_-, v)_U - \|(w_k^{n_j}-u_k^{n_j})_-\|_U^2 \right] \leq 0,
   	\end{split}
   	\]
    so that $\mu_k^1 \in \partial I_C(w_{k+1}-u_{k+1})$ follows. The result $\mu_k^2 \in \partial I_C(w_{k+1}+u_{k+1})$ can be obtained analogously.
\end{proof}

Now, we can pass to the limit in the optimality condition \eqref{eq:oc_nonpenalg_pen} for the penalized problem to obtain an optimality condition for the non-penalized problem.
\begin{lemma} \label{lm:ocSubproblemAlgo}
Let $(u_{k+1},w_{k+1})$ be the solution of \eqref{eq:minprob_algo_nonpen}.
Then there are  $\mu_k^1 \in \partial I_C(w_{k+1}-u_{k+1})$ and $\mu_k^2 \in \partial I_C(w_{k+1}+u_{k+1})$ such that
\begin{multline} \label{eq:oc_nonpenalg_subdiff_l2}
    f'(u_k)v+\alpha(u_{k+1},v)_U+\beta(w_{k+1},z)_W+2\gamma \int_{\Omega}\psi'_{\epsilon_k}(w_k^2)w_{k+1} z \dd x + L_k(u_{k+1}-u_k,v)_U
    \\
    + (\mu_k^1, z-v)_U + (\mu_k^2,z+v)_U = 0  \quad \forall (v,z)\in U\times W.
    \end{multline}
The multipliers $\mu_k^1$ and $\mu_k^2$ can be chosen such that the sequences $(\mu_k^1)$ and $(\mu_k^2)$ are bounded in $U$.
\end{lemma}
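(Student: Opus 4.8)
The plan is to obtain the optimality condition \eqref{eq:oc_nonpenalg_subdiff_l2} by passing to the limit $\delta_n \searrow 0$ in the optimality condition \eqref{eq:oc_nonpenalg_pen} of the penalized problem \eqref{eq:minprob_algo_nonpen_pen}, and to read off the boundedness of the multipliers from the estimates already available. First I would fix $k$ and take a sequence $\delta_n \searrow 0$ with corresponding solutions $(u_k^n, w_k^n)$ of \eqref{eq:minprob_algo_nonpen_pen}. By Lemma \ref{tm:boundedness_nonpen_pen} the sequences $\mu_k^{1,n} := \frac1{\delta_n}(w_k^n - u_k^n)_-$ and $\mu_k^{2,n} := \frac1{\delta_n}(w_k^n + u_k^n)_-$ are bounded in $U$, so after extracting a subsequence (not relabeled) we have $\mu_k^{1,n} \rightharpoonup \mu_k^1$ and $\mu_k^{2,n} \rightharpoonup \mu_k^2$ in $U$ for some $\mu_k^1, \mu_k^2 \in U$. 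By Lemma \ref{tm:nonpenalg_conv_res} we additionally have the strong convergences $u_k^n \to u_{k+1}$ in $U$ and $w_k^n \to w_{k+1}$ in $W$, and the inclusions $\mu_k^1 \in \partial I_C(w_{k+1} - u_{k+1})$, $\mu_k^2 \in \partial I_C(w_{k+1} + u_{k+1})$.

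Next I would pass to the limit $n \to \infty$ termwise in \eqref{eq:oc_nonpenalg_pen} for fixed test functions $(v,z) \in U \times W$. The term $f'(u_k)v$ is constant in $n$; $\alpha(u_k^n, v)_U \to \alpha(u_{k+1}, v)_U$ and $L_k(u_k^n - u_k, v)_U \to L_k(u_{k+1} - u_k, v)_U$ by strong convergence in $U$; $\beta(w_k^n, z)_W \to \beta(w_{k+1}, z)_W$ by strong convergence in $W$; the term $2\gamma \int_\Omega \psi'_{\epsilon_k}(w_k^2) w_k^n z \dd x \to 2\gamma \int_\Omega \psi'_{\epsilon_k}(w_k^2) w_{k+1} z \dd x$ since $\psi'_{\epsilon_k}(w_k^2)$ is a fixed bounded function and $w_k^n \to w_{k+1}$ in $L^2(\Omega)$; finally the penalty terms $\frac1{\delta_n}((w_k^n \mp u_k^n)_-, z \mp v)_U = (\mu_k^{i,n}, z \mp v)_U \to (\mu_k^i, z \mp v)_U$ by weak convergence of the multipliers in $U$ against the fixed element $z \mp v \in U$ (using the constant extension of $z$). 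This yields \eqref{eq:oc_nonpenalg_subdiff_l2}.

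For the last statement, I would revisit the bound from Lemma \ref{tm:boundedness_nonpen_pen}: passing to the limit inferior as $n \to \infty$ and using weak lower semicontinuity of the $U$-norm together with the strong convergences gives
\[
 \|\mu_k^i\|_U \le \|f'(u_k)\|_U + \alpha \|u_{k+1}\|_U + L_k \|u_{k+1} - u_k\|_U, \qquad i = 1,2.
\]
By Lemma \ref{lm:mon_phi_nonpen} the sequences $(u_k)$, $(w_{k+1})$ and $(f'(u_k))$ are bounded in the respective spaces, $\|u_{k+1} - u_k\|_U \to 0$, and $(L_k)$ is bounded by Lemma \ref{tm:lk_bounded_nonpenalgo}; hence the right-hand side is bounded uniformly in $k$, giving boundedness of $(\mu_k^1)$ and $(\mu_k^2)$ in $U$. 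I expect the only delicate point to be bookkeeping the two nested limit processes: the multipliers $\mu_k^1, \mu_k^2$ depend on the choice of subsequence in $\delta_n$, but since \eqref{eq:oc_nonpenalg_subdiff_l2} together with the subdifferential inclusions fixes the affine functional $(v,z) \mapsto (\mu_k^1, z - v)_U + (\mu_k^2, z + v)_U$, and testing with $(v,0)$ and $(0,z)$ separately recovers $\mu_k^1 + \mu_k^2$ (as an element of $U$ acting via the time-integral) and the spatial functional $\mu_k^1 + \mu_k^2$; one should simply fix one such selection for each $k$, so that the uniform bound above applies to the chosen sequence.
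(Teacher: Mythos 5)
Your proposal is correct and follows essentially the same route as the paper: extract weak $U$-limits of the penalization quotients $\frac1{\delta_n}(w_k^n\mp u_k^n)_-$ using Lemma \ref{tm:boundedness_nonpen_pen}, invoke Lemma \ref{tm:nonpenalg_conv_res} for the strong convergence of $(u_k^n,w_k^n)$ and the subdifferential inclusions, pass to the limit termwise in \eqref{eq:oc_nonpenalg_pen}, and obtain the uniform-in-$k$ bound on the chosen multipliers from the $k$-uniform estimate in Lemma \ref{tm:boundedness_nonpen_pen} together with weak lower semicontinuity of the norm. The paper's proof is just a terser version of exactly this argument, so no further comparison is needed.
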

\begin{proof}
Due to Lemma \ref{tm:boundedness_nonpen_pen}, the sequences $(\frac1{\delta_n} (w_k^n-u_k^n)_-)$ and $( \frac1{\delta_n} (w_k^n+u_k^n)_-) $  are bounded in $U$. Hence, we can extract weakly converging subsequences with limits $\mu_k^1$ and $\mu_k^2$, respectively.
Using Lemma \ref{tm:nonpenalg_conv_res}, we can pass to the limit in  \eqref{eq:oc_nonpenalg_pen} to prove the claim.
	Boundedness for this choice of multipliers $(\mu_k^1)$ and $(\mu_k^2)$ follows from the boundedness result in Lemma \ref{tm:boundedness_nonpen_pen}.
\end{proof}

\begin{lemma}\label{tm:conv_lambda_alg}
    Let $(u_k,w_k,L_k)$ be a sequence generated by Algorithm \ref{alg:alg_nonpen}. Let $(\bar u,\Bar{w})$ be a weak limit in $U\times W$ of a subsequence $(u_{k_n},w_{k_n})$. Then it holds \begin{align*}
        2 \int_{\Omega}\psi'_{\epsilon_{k_n}}(w_{k_n}^2)w_{k_n+1}^2 \dd x \to p \int_{\Omega}|\Bar{w}|^p \dd x.
    \end{align*}
\end{lemma}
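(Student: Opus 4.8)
The strategy is to reduce the assertion to the convergence of $2\io \psi_{\epsilon_k}'(w_k^2)w_k^2\dd x$ --- the same expression but with $w_k$ in place of $w_{k+1}$, whose limit is already known from \cite{sparse} --- and to estimate the error committed by replacing one factor $w_k$ by $w_{k+1}$ via the summability property established for the algorithm. First I would pass to the subsequence $(k_n)$ with $w_{k_n}\rightharpoonup\bar w$ in $W$; by the compact embedding $W\hookrightarrow L^2(\Omega)$ from Lemma~\ref{tm:propFrac}(i) we then have $w_{k_n}\to\bar w$ in $L^2(\Omega)$. Introduce the nonnegative, bounded weight $\omega_k:=\psi_{\epsilon_k}'(w_k^2)$ (indeed $\omega_k\le\frac{p}{2}\epsilon_k^{p-2}$ by Lemma~\ref{tm:psieps}(iv)) and set
\[
 a_k:=\io\omega_k w_{k+1}^2\dd x,\qquad b_k:=\io\omega_k w_k^2\dd x,\qquad c_k:=\io\omega_k(w_{k+1}-w_k)^2\dd x,
\]
so that the claim is $2a_{k_n}\to p\io|\bar w|^p\dd x$.

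Next I would assemble two ingredients. Since $2b_k=G_{\epsilon_k}'(w_k)w_k$ and $w_{k_n}\to\bar w$ in $L^2(\Omega)$, the argument of \cite[Lemma~5.5]{sparse} --- which, as noted in the proof of Lemma~\ref{tm:theory_lambda_conv}, uses only the definition of the multiplier through the smoothing scheme and convergence in $L^2(\Omega)$ --- gives $2b_{k_n}\to p\io|\bar w|^p\dd x$. Second, Lemma~\ref{lm:mon_phi_nonpen} furnishes $\sum_k\gamma c_k<\infty$, so $c_k\to 0$ along the whole sequence and in particular $c_{k_n}\to 0$. The link between $a_k$ and $b_k$ is obtained from the Cauchy--Schwarz inequality in $L^2(\Omega,\omega_k\dd x)$ applied to the factorisation $w_{k+1}^2-w_k^2=(w_{k+1}-w_k)(w_{k+1}+w_k)$, combined with $(w_{k+1}+w_k)^2\le 2w_{k+1}^2+2w_k^2$:
\[
 |a_k-b_k|\le c_k^{1/2}\Bigl(\io\omega_k(w_{k+1}+w_k)^2\dd x\Bigr)^{1/2}\le c_k^{1/2}\bigl(2a_k+2b_k\bigr)^{1/2}.
\]

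From this inequality I would first deduce an a priori bound on $(a_{k_n})$: for $n$ large enough $c_{k_n}\le 1$, hence $a_{k_n}\le b_{k_n}+(2a_{k_n}+2b_{k_n})^{1/2}$; writing $s:=(2a_{k_n}+2b_{k_n})^{1/2}$ this is $s^2-2s-4b_{k_n}\le 0$, and since $(b_{k_n})$ converges, $s$ --- and therefore $a_{k_n}$ --- stays bounded. Inserting this bound back into the displayed estimate yields $|a_{k_n}-b_{k_n}|\le c_{k_n}^{1/2}\cdot\text{const}\to 0$, whence $2a_{k_n}=2b_{k_n}+2(a_{k_n}-b_{k_n})\to p\io|\bar w|^p\dd x$, which is the claim. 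The delicate point is exactly this last step: Cauchy--Schwarz controls the target $a_k$ only in terms of itself plus known quantities, so an a priori boundedness of $(a_{k_n})$ must be extracted first, and it is precisely the smallness $c_{k_n}\to 0$ --- a consequence of the descent estimate of the algorithm in Lemma~\ref{lm:mon_phi_nonpen} --- that makes this bootstrap possible. A minor additional point is that \cite[Lemma~5.5]{sparse} is invoked with merely $L^2(\Omega)$-convergence of $w_{k_n}$, which is legitimate because its proof does not use the $W$-topology.
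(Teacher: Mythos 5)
Your argument is correct, and it is consistent with the route the paper intends; the difference is that the paper's proof is only a pointer to \cite[Lemma 7.7]{sparse} together with the two ingredients ``compact embedding of $W$ into $L^2(\Omega)$'' and ``Lemma \ref{lm:mon_phi_nonpen}'', whereas you supply a self-contained argument. The one substantive choice you make differently from the paper's one-line sketch is which consequence of Lemma \ref{lm:mon_phi_nonpen} to use: the paper highlights $(w_{k+1}-w_k)\to 0$ in $W$, while you use the summability of the \emph{weighted} term $\int_{\Omega}\psi'_{\epsilon_k}(w_k^2)(w_{k+1}-w_k)^2\dd x$, i.e.\ $c_k\to 0$. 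This is in fact the sharper and more robust choice: since $\|\psi'_{\epsilon_k}(w_k^2)\|_{L^\infty}\le \tfrac p2\epsilon_k^{p-2}\to\infty$, the unweighted convergence $(w_{k+1}-w_k)\to 0$ in $L^2(\Omega)$ alone does not control $a_k-b_k=\int_\Omega\psi'_{\epsilon_k}(w_k^2)(w_{k+1}-w_k)(w_{k+1}+w_k)\dd x$, and your weighted Cauchy--Schwarz plus the bootstrap for the a priori bound on $(a_{k_n})$ closes exactly this gap. The remaining ingredients check out: $2b_k=G'_{\epsilon_k}(w_k)w_k$ by Lemma \ref{tm:psieps}(iv), and the pointwise estimate $0\le p|w|^p-2\psi'_\epsilon(w^2)w^2\le p\epsilon^p$ shows that $2b_{k_n}\to p\int_\Omega|\bar w|^p\dd x$ indeed needs only $w_{k_n}\to\bar w$ in $L^2(\Omega)$ (hence only weak $W$-convergence plus Lemma \ref{tm:propFrac}(i)), so your invocation of the argument of \cite[Lemma 5.5]{sparse} is legitimate. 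In short: a correct proof, filling in the details the paper delegates to the citation, with a technically sound treatment of the unbounded weight.
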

\begin{proof}
    The proof can be done as in \cite[Lemma 7.7]{sparse} using the compact embedding of $W$ into $L^2(\Omega)$ and Lemma \ref{lm:mon_phi_nonpen} for the convergence $(w_{k_n+1}-w_{k_n}) \to 0$  in $W$.
\end{proof}

\begin{lemma}\label{tm:conv_uk}
   Let $(u_k,w_k,L_k)$ be a sequence generated by Algorithm \ref{alg:alg_nonpen}.
Let $(\bar u,\Bar{w})$ be a weak limit in $U\times W$ of a subsequence $(u_{k_n},w_{k_n})$.
    Then the weak convergence is in fact strong, i.e. $u_{k_n} \to \bar u$ in $U$ for $n\to\infty$.
\end{lemma}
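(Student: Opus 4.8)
The plan is to pass to the limit along the subsequence $(k_n)$ in the optimality condition \eqref{eq:oc_nonpen_alg_ineq} of the subproblem \eqref{eq:minprob_algo_nonpen} and to exploit the coercivity of $\|\cdot\|_U$, following the argument of \cite[Lemma 7.8]{sparse}. As a preparation, note that by Lemma~\ref{lm:mon_phi_nonpen} we have $\|u_{k_n}-u_{k_n-1}\|_U\to0$ and $\|w_{k_n}-w_{k_n-1}\|_W\to0$, so that the shifted sequences also converge weakly, $u_{k_n-1}\rightharpoonup\bar u$ in $U$ and $w_{k_n-1}\rightharpoonup\bar w$ in $W$. Moreover every iterate $(u_k,w_k)$ solves the constrained problem \eqref{eq:minprob_algo_nonpen} and hence lies in the convex, closed, and therefore weakly closed set $K$, so $(\bar u,\bar w)\in K$.

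Next, I would write \eqref{eq:oc_nonpen_alg_ineq} for the index $k=k_n-1$, whose solution is $(u_{k_n},w_{k_n})$, and test it with the admissible pair $(v,z)=(\bar u,\bar w)\in K$. Rearranging gives
\begin{multline*}
\alpha(u_{k_n},u_{k_n}-\bar u)_U+\beta(w_{k_n},w_{k_n}-\bar w)_W+2\gamma\int_\Omega\psi'_{\epsilon_{k_n-1}}(w_{k_n-1}^2)\,w_{k_n}(w_{k_n}-\bar w)\dd x \\
\le f'(u_{k_n-1})(\bar u-u_{k_n})+L_{k_n-1}(u_{k_n}-u_{k_n-1},\bar u-u_{k_n})_U.
\end{multline*}
The right-hand side tends to $0$: by complete continuity of $f'$ (Assumption~\ref{ass:f'}) we have $f'(u_{k_n-1})\to f'(\bar u)$ in $U$ while $\bar u-u_{k_n}\rightharpoonup0$, and $(L_{k_n-1})$ is bounded by Lemma~\ref{tm:lk_bounded_nonpenalgo} while $\|u_{k_n}-u_{k_n-1}\|_U\to0$ and $\|\bar u-u_{k_n}\|_U$ stays bounded.

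The remaining step is to show that each of the three summands on the left-hand side has nonnegative $\liminf$. For the first two this is immediate from weak lower semicontinuity of $\|\cdot\|_U$ and $\|\cdot\|_W$ together with $u_{k_n}\rightharpoonup\bar u$, $w_{k_n}\rightharpoonup\bar w$. For the third term I would split $w_{k_n}(w_{k_n}-\bar w)=w_{k_n}^2-w_{k_n}\bar w$: by Lemma~\ref{tm:conv_lambda_alg} applied to the subsequence $(k_n-1)$ (legitimate since $(u_{k_n-1},w_{k_n-1})\rightharpoonup(\bar u,\bar w)$) one has $2\gamma\int_\Omega\psi'_{\epsilon_{k_n-1}}(w_{k_n-1}^2)w_{k_n}^2\dd x\to\gamma p\int_\Omega|\bar w|^p\dd x$, while a Cauchy--Schwarz estimate in $L^2(\Omega)$ weighted by $\psi'_{\epsilon_{k_n-1}}(w_{k_n-1}^2)$, combined with the pointwise convergence $\psi'_{\epsilon_{k_n-1}}(w_{k_n-1}^2)\bar w^2\to\frac p2|\bar w|^p$ a.e. (using $w_{k_n-1}\to\bar w$ in $L^2(\Omega)$ along a further subsequence, $\epsilon_{k_n-1}\to0$, and domination by a multiple of $|\bar w|^p\in L^1(\Omega)$), yields $\limsup_n 2\gamma\int_\Omega\psi'_{\epsilon_{k_n-1}}(w_{k_n-1}^2)w_{k_n}\bar w\dd x\le\gamma p\int_\Omega|\bar w|^p\dd x$; hence the $\liminf$ of the third summand is $\ge0$. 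This passage to the limit in the degenerating weight $\psi'_{\epsilon_{k_n-1}}$ — which blows up where $\bar w$ vanishes — is the main obstacle, and it is carried out as in \cite[Lemma 7.7, 7.8]{sparse}.

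Finally, since the right-hand side converges to $0$ and all three left-hand summands have nonnegative $\liminf$, Lemma~\ref{tm:auxLimSupInf} with $N=3$ shows that each summand converges; as each limit is $\ge0$ and their sum is $\le0$, each limit equals $0$. In particular $\alpha(u_{k_n},u_{k_n}-\bar u)_U\to0$, so $\|u_{k_n}\|_U^2\to\|\bar u\|_U^2$ because $(u_{k_n},\bar u)_U\to\|\bar u\|_U^2$; together with $u_{k_n}\rightharpoonup\bar u$ this gives $u_{k_n}\to\bar u$ in $U$. As a byproduct, $\beta(w_{k_n},w_{k_n}-\bar w)_W\to0$ also yields $w_{k_n}\to\bar w$ in $W$.
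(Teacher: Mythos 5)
Your overall strategy (test the variational inequality \eqref{eq:oc_nonpen_alg_ineq} at index $k_n-1$ with $(\bar u,\bar w)$, show the right-hand side vanishes, and apply Lemma \ref{tm:auxLimSupInf} to the three coercive terms) is quite different from the paper's argument, and it contains a genuine gap in the treatment of the third term. You need
\[
\limsup_{n\to\infty} \int_\Omega \psi'_{\epsilon_{k_n-1}}(w_{k_n-1}^2)\,\bar w^2 \dd x \le \frac{p}{2}\int_\Omega |\bar w|^p\dd x,
\]
and you justify it by pointwise convergence plus ``domination by a multiple of $|\bar w|^p$''. That domination is false: the only available bounds are $\psi'_\epsilon(t)\le \frac p2 t^{(p-2)/2}$ and $\psi'_\epsilon(t)\le\frac p2\epsilon^{p-2}$, so the integrand is controlled by $\frac p2\, w_{k_n-1}^{p-2}\bar w^2$ or by $\frac p2\,\epsilon_{k_n-1}^{p-2}\bar w^2$, neither of which is $\lesssim |\bar w|^p$ on the set where $w_{k_n-1}$ is small but $\bar w$ is not. (Lemma \ref{tm:conv_lambda_alg} works precisely because there the weight $\psi'_{\epsilon_k}(w_k^2)$ multiplies essentially $w_k^2$ itself, and $\psi'_\epsilon(t)\,t\le\frac p2 t^{p/2}$; your term pairs the weight with the \emph{different} function $\bar w^2$, which destroys this cancellation.) Without a reverse-Fatou/domination argument the $\limsup$ can be $+\infty$, so the nonnegativity of the $\liminf$ of the third summand — and hence the applicability of Lemma \ref{tm:auxLimSupInf} — is not established. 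A further warning sign: your argument would yield $w_{k_n}\to\bar w$ strongly in $W$ ``as a byproduct'', but the paper explicitly remarks after Theorem \ref{tm:tm_limitcond_algo_nonpen} that only weak convergence of $(w_k)$ in $W$ is available, which is exactly why \eqref{eq:cond_lambda_nonpen_alg} is only an inequality. If your estimate on the $\psi'$-term were provable, that remark and the weakened stationarity system would be unnecessary.

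The paper avoids this difficulty entirely by a projection argument: for fixed $w_k$, the iterate $u_k$ minimizes a strictly convex quadratic over $\{u\in U:\ |u(t,x)|\le w_k(x)\}$, hence is given pointwise by
$u_k=\max(-w_k,\min(w_k,g_{k-1}))$ with $g_{k-1}=-\frac1\alpha\bigl(f'(u_{k-1})+L_{k-1}(u_k-u_{k-1})\bigr)$. Strong convergence of $u_{k_n}$ in $U$ then follows from strong convergence of $g_{k_n-1}$ (complete continuity of $f'$, boundedness of $(L_k)$, and $\|u_{k_n}-u_{k_n-1}\|_U\to0$) together with $w_{k_n}\to\bar w$ in $L^2(\Omega)$ via the \emph{compact embedding} $W\hookrightarrow L^2(\Omega)$ — no strong $W$-convergence and no handling of the degenerate weight $\psi'_{\epsilon_k}$ is needed. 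I would recommend you adopt this route, or else supply a genuine proof of the $\limsup$ estimate above, which appears to require quantitative control of $\epsilon_k$ against the measure of the sets where $w_{k-1}$ is small.
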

\begin{proof}
    Let $(u_k,w_k)$ be the solution to the subproblem \eqref{eq:minprob_algo_nonpen}.
    Then $u_k$ is the solution of
    \begin{equation}\label{eq_u_k_w_fixed}
         \underset{u\in U: (u,w_k) \in K}{\min}  f'(u_{k-1})u + \frac{\alpha}{2}\|u\|_U^2 + \frac{L_{k-1}}{2}\|u-u_{k-1}\|_U^2
    \end{equation}
    for fixed $w_k$.
    Due to strong convexity, $u_k$ is equivalently characterized by the necessary optimality conditions for \eqref{eq_u_k_w_fixed}.
    That is, $u_k$ solves \eqref{eq_u_k_w_fixed} if it solves the following variational inequality: find $u$ with $(u,w_k) \in K$ such that
    \[
     ( \alpha u + f'(u_{k-1}) + L_{k-1}(u - u_{k-1}), \ v-u) \ge0 \quad \forall v\in U: \ (v,w_k)\in K.
    \]
    Thus, 
     $u_k$ is equal to the projection of
     \[
     g_{k-1}:= - \frac1{\alpha }\left( f'(u_{k-1}) + L_{k-1}(u_k-u_{k-1} )\right)
   \]
   onto the admissible set
    $\{u\in U: |u(t,x)|\leq w_k(x) \text{ for a.e. } (t,x)\}$.
    Since $U=L^2(I\times\Omega)$, $u_k$ can be written
    as
    \begin{align}\label{eq:minprobInUForGivenW}
        u_k(t,x) = \max\left( -w_k(x),\,\min\left(w_k(x),\,  g_{k-1}(t,x)  \right)\right),
    \end{align}
    see also \cite[Example 6.24]{BonnansShapiro2000}.

    Let now $(u_{k_n},w_{k_n}) \rightharpoonup (\bar u, \bar w)$ in $U\times W$.
    Due to compact embeddings, $w_{k_n} \to \bar w$ in $L^2(\Omega)$.
    Using the result of Lemma \ref{lm:mon_phi_nonpen}, we get $(u_{k_n}-u_{k_n-1}) \to 0$.
    The boundedness of $(L_k)$ by Lemma \ref{tm:lk_bounded_nonpenalgo} and Assumption \ref{ass:f'} imply that $g_{k_n-1}$ converges strongly in $U$.
    Since $\max$ and $\min$ give rise to continuous mappings from $U$ to $U$, \eqref{eq:minprobInUForGivenW} implies that $u_{k_n}$ converges strongly in $U$.
\end{proof}

Now we can state a stationarity system for weak limit points.
Let us remark that the resulting system is  almost equal to the necessary optimality condition in Theorem \ref{thm_nec_opt_cond}.
The only difference is the inequality \eqref{eq:cond_lambda_nonpen_alg}, which is satisfied with equality in Theorem \ref{thm_nec_opt_cond}.

\begin{theorem} \label{tm:tm_limitcond_algo_nonpen}
Let $(u_k,w_k,L_k)$ be a sequence of iterates generated by Algorithm \ref{alg:alg_nonpen}, and let $\bar u$ and $\Bar{w}$ be subsequential weak limits in $U$ and $W$ of $(u_k)$ and $(w_k)$, respectively. Then there are $\Bar{\lambda}\in W^*$, $\bar{\mu}^1, \Bar{\mu}^2 \in U$ such that
\begin{align} \label{eq:oc_lim_algnonpen}
        f'(\bar u)v + \alpha (\bar u,v)_U + \beta (\Bar{w},z)_W + \gamma  \braket{\bar{\lambda},z}_W +(\Bar{\mu}^1, z-v)_U + (\Bar{\mu}^2, z+v)_U = 0
    \end{align} for all $(v,z) \in U \times W$ and
    \begin{align}\label{eq:cond_lambda_nonpen_alg}
        \braket{\Bar{\lambda},\Bar{w}}_W \geq p\int_{\Omega}|\bar{w}|^p\dd x.
    \end{align}
    Moreover, $\bar\mu^1 \in \partial I_C(\bar w-\bar u)$ and $\bar \mu^2 \in \partial I_C({\bar w + \bar u})$.
\end{theorem}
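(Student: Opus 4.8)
The strategy is to pass to the limit in the subproblem optimality condition \eqref{eq:oc_nonpenalg_subdiff_l2}, in the same spirit as \eqref{eq:oc_aux} was used for Theorem \ref{thm_nec_opt_cond}. First I would fix the subsequence along which $u_{k_n}\rightharpoonup\bar u$ in $U$ and $w_{k_n}\rightharpoonup\bar w$ in $W$ and pass to a further subsequence without relabeling. By Lemma \ref{tm:conv_uk} the convergence $u_{k_n}\to\bar u$ is in fact strong in $U$, and the compact embedding $W\hookrightarrow L^2(\Omega)$ gives $w_{k_n}\to\bar w$ in $L^2(\Omega)$. Combining this with $\|u_{k+1}-u_k\|_U\to0$ and $\|w_{k+1}-w_k\|_W\to0$ from Lemma \ref{lm:mon_phi_nonpen} yields $u_{k_n+1}\to\bar u$ in $U$, $w_{k_n+1}\rightharpoonup\bar w$ in $W$, and $w_{k_n+1}\to\bar w$ in $L^2(\Omega)$. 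By Lemma \ref{tm:lk_bounded_nonpenalgo} the sequence $(L_{k_n})$ is bounded, and by Lemma \ref{lm:ocSubproblemAlgo} we may pick multipliers $\mu_k^1\in\partial I_C(w_{k+1}-u_{k+1})$, $\mu_k^2\in\partial I_C(w_{k+1}+u_{k+1})$ that are bounded in $U$; extract weak limits $\mu_{k_n}^1\rightharpoonup\bar\mu^1$ and $\mu_{k_n}^2\rightharpoonup\bar\mu^2$ in $U$. Finally, writing $\langle\lambda_k,z\rangle_W=2\gamma\int_\Omega\psi'_{\epsilon_k}(w_k^2)w_{k+1}z\dd x$, testing \eqref{eq:oc_nonpenalg_subdiff_l2} with $v=0$ gives
\[
\langle\lambda_k,z\rangle_W=-\beta(w_{k+1},z)_W-(\mu_k^1,z)_U-(\mu_k^2,z)_U\qquad\forall z\in W,
\]
so the right-hand side defines a limit functional and $\tfrac1\gamma\lambda_{k_n}\rightharpoonup^*\bar\lambda$ in $W^*$ for a suitable $\bar\lambda\in W^*$.

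With these convergences in hand, passing to the limit in \eqref{eq:oc_nonpenalg_subdiff_l2} for an arbitrary fixed $(v,z)\in U\times W$ is routine: $f'(u_{k_n})v\to f'(\bar u)v$ by the complete continuity of $f'$ from Assumption \ref{ass:f'}, $\alpha(u_{k_n+1},v)_U\to\alpha(\bar u,v)_U$, $\beta(w_{k_n+1},z)_W\to\beta(\bar w,z)_W$, the $\psi'$-term converges by the identification of $\bar\lambda$ above, $L_{k_n}(u_{k_n+1}-u_{k_n},v)_U\to0$ since $(L_{k_n})$ is bounded and $u_{k_n+1}-u_{k_n}\to0$, and $(\mu_{k_n}^i,\cdot)_U$ converge by weak convergence in $U$. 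This yields \eqref{eq:oc_lim_algnonpen}. To obtain $\bar\mu^1\in\partial I_C(\bar w-\bar u)$, and analogously for $\bar\mu^2$, I would argue as in Theorem \ref{thm_nec_opt_cond}: $\mu_{k_n}^1\le0$ and $w_{k_n+1}-u_{k_n+1}\ge0$ pass to the limit because these cones are (weakly, resp.\ strongly) closed, and $(\mu_{k_n}^1,w_{k_n+1}-u_{k_n+1})_U=0$ passes to the limit because $w_{k_n+1}-u_{k_n+1}\to\bar w-\bar u$ strongly in $U$ while $\mu_{k_n}^1\rightharpoonup\bar\mu^1$ weakly in $U$.

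The only genuinely new point is the one-sided relation \eqref{eq:cond_lambda_nonpen_alg}. The plan here is to test \eqref{eq:oc_nonpenalg_subdiff_l2} with $(v,z)=(u_{k+1},w_{k+1})$ and exploit the complementarity $(\mu_k^1,w_{k+1}-u_{k+1})_U=(\mu_k^2,w_{k+1}+u_{k+1})_U=0$, which cancels the multiplier terms and leaves
\[
2\gamma\int_\Omega\psi'_{\epsilon_k}(w_k^2)w_{k+1}^2\dd x=-f'(u_k)u_{k+1}-\alpha\|u_{k+1}\|_U^2-\beta\|w_{k+1}\|_W^2-L_k(u_{k+1}-u_k,u_{k+1})_U.
\]
Along the subsequence the left-hand side converges to $\gamma p\int_\Omega|\bar w|^p\dd x$ by Lemma \ref{tm:conv_lambda_alg}; on the right, $f'(u_{k_n})u_{k_n+1}\to f'(\bar u)\bar u$, $\|u_{k_n+1}\|_U^2\to\|\bar u\|_U^2$ (here the strong convergence from Lemma \ref{tm:conv_uk} is essential), and the $L_{k_n}$-term vanishes. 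Hence $\lim_n\|w_{k_n+1}\|_W^2$ exists and, using $\|\bar w\|_W^2\le\liminf_n\|w_{k_n+1}\|_W^2$,
\[
\gamma p\int_\Omega|\bar w|^p\dd x=-f'(\bar u)\bar u-\alpha\|\bar u\|_U^2-\beta\lim_n\|w_{k_n+1}\|_W^2\le-f'(\bar u)\bar u-\alpha\|\bar u\|_U^2-\beta\|\bar w\|_W^2.
\]
On the other hand, testing the already-established \eqref{eq:oc_lim_algnonpen} with $(v,z)=(\bar u,\bar w)$ and using $(\bar\mu^1,\bar w-\bar u)_U=(\bar\mu^2,\bar w+\bar u)_U=0$ gives $\gamma\langle\bar\lambda,\bar w\rangle_W=-f'(\bar u)\bar u-\alpha\|\bar u\|_U^2-\beta\|\bar w\|_W^2$, and dividing by $\gamma>0$ produces \eqref{eq:cond_lambda_nonpen_alg}. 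The main obstacle is exactly the gap between weak and strong convergence of $w_{k_n+1}$ in $W$: with strong convergence the last estimate would be an equality, reproducing \eqref{eq:oc_lambda} of Theorem \ref{thm_nec_opt_cond}, but only weak convergence is available here, so weak lower semicontinuity of $\|\cdot\|_W$ forces the inequality.
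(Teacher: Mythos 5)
Your proposal is correct and follows essentially the same route as the paper's proof: extract bounded multipliers via Lemma \ref{lm:ocSubproblemAlgo}, identify $\bar\lambda$ from the remaining terms of \eqref{eq:oc_nonpenalg_subdiff_l2}, pass to the limit to get \eqref{eq:oc_lim_algnonpen}, then test with $(u_{k+1},w_{k+1})$, use the complementarity relations, Lemma \ref{tm:conv_lambda_alg}, and weak lower semicontinuity of $\|\cdot\|_W$ to obtain the inequality \eqref{eq:cond_lambda_nonpen_alg} after comparing with \eqref{eq:oc_lim_algnonpen} tested at $(\bar u,\bar w)$. Your write-up is in fact slightly more explicit than the paper's on two minor points (the verification of the subdifferential memberships in the limit, and the pointwise identification of $\bar\lambda$ by testing with $v=0$), but the underlying argument is identical.
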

\begin{proof}
Let $(\bar u, \bar w)$ be the weak limit of a subsequence $(u_{k_n}, w_{k_n})$. By Lemma \ref{lm:ocSubproblemAlgo}, we can assume the corresponding sequence $(\mu_{k_n}^1)$ and $(\mu_{k_n}^2)$ of multipliers to be bounded. Therefore, we can a extract a subsequence still denoted by $(k_n)$ such that  $\mu_{k_n}^1\rightharpoonup \bar\mu^1$ and $\mu_{k_n}^2\rightharpoonup \bar\mu^2$ in $U$ for some $\bar\mu^1, \bar\mu^2\in U$. Let $\lambda_k\in W^*$ be defined as $\braket{\lambda_k,v}_W:= 2 \int_{\Omega}\psi'_{\epsilon_{k}}(w_{k}^2)w_{k+1} v \dd x$. By convergence of the remaining terms in \eqref{eq:oc_nonpenalg_subdiff_l2} we obtain $\lambda_{k_n} \rightharpoonup \Bar{\lambda}$ in $W^*$ for some $\Bar{\lambda}$.
Passing to the limit in \eqref{eq:oc_nonpenalg_subdiff_l2} along $({k_n})$ yields now \eqref{eq:oc_lim_algnonpen}. Next, we test \eqref{eq:oc_nonpenalg_subdiff_l2} with $(u_{{k_n}+1},w_{{k_n}+1})$
to obtain
\begin{multline}\label{eq:eqForLimInStatSyst}
f'(u_{k_n})u_{{k_n}+1}+\alpha\|u_{{k_n}+1}\|_U^2+\beta\|w_{{k_n}+1}\|^2_W + L_{k_n}(u_{{k_n}+1}-u_{k_n},u_{{k_n}+1})_U \\+ (\mu_{{k_n}}^1, w_{{k_n}+1}-u_{{k_n}+1})_U + (\mu_{{k_n}}^2, w_{{k_n}+1}+u_{{k_n}+1})_U
=
- \gamma  \braket{ \lambda_{k_n}, w_{{k_n}+1}}_W.
\end{multline}
Since $\mu_{k_n}^1$ and $\mu_{k_n}^2$ are subgradients of $I_C$ at $w_{{k_n}+1}\pm u_{{k_n}+1}$ by Lemma \ref{lm:ocSubproblemAlgo}, it holds that
\begin{align}\label{eq:for_compl_sys}    (\mu_{k_n}^1, w_{{k_n}+1}-u_{{k_n}+1})_U = 0 \hspace{5mm} \textrm{and} \hspace{5mm} (\mu_{k_n}^2, w_{{k_n}+1}+u_{{k_n}+1})_U = 0.
\end{align}
Passing to the limit inferior on the left-hand side of \eqref{eq:eqForLimInStatSyst} and using Lemma
\ref{tm:conv_lambda_alg} for its right-hand side implies \begin{align} \label{eq:tm_limitcond_algo_nonpen_aux}
    f'(\bar u)\bar u + \alpha \|\bar u\|_U^2+\beta \|\Bar{w}\|_W^2 \leq -p \, \gamma  \int_{\Omega}|\Bar{w}|^p \dd x.
\end{align}
By strong convergence of $(w_{k_n})$ and $(u_{k_n})$ in $U$ due to Lemma \ref{tm:conv_uk}, the equalities in \eqref{eq:for_compl_sys} lead to
\begin{align}\label{eq:ineq1_compl}
    (\bar{\mu}^1,\Bar{w}-\bar u)_U = 0 \hspace{5mm} \text{and} \hspace{5mm} (\bar{\mu}^2, \Bar{w}+\bar u)_U = 0.
\end{align}
Testing equation \eqref{eq:oc_lim_algnonpen} with $(\bar u,\bar w)$ therefore implies \begin{align*}
     f'(\bar u)\bar u + \alpha \|\bar u\|_U^2 + \beta \|\Bar{w}\|^2_W  =  - \gamma  \braket{\bar{\lambda},\Bar{w}}_W.
\end{align*}
Together with equation \eqref{eq:tm_limitcond_algo_nonpen_aux} this now gives \begin{align*}
            \braket{\Bar{\lambda},\Bar{w}}_W \geq p\int_{\Omega}|\bar{w}|^p\dd x.
\end{align*}
\end{proof}
One can observe that the stationarity system derived here is weaker than the necessary optimality condition from Theorem \ref{thm_nec_opt_cond}, as we only obtain an inequality in \eqref{eq:cond_lambda_nonpen_alg}. The reason for this is that we only have weak convergence $w_k\rightharpoonup \bar w$ in $W$.
In order to pass to the limit in \eqref{eq:eqForLimInStatSyst} to obtain equality in \eqref{eq:tm_limitcond_algo_nonpen_aux}
we would require strong convergence of the sequence $(w_k)$, which is necessary to prove Lemma \ref{tm:theory_lambda_conv}.

\begin{theorem}
    Let  $(\bar u, \Bar{w})$ be a weak limit point of the algorithm such that the map $x\mapsto \| f'(\bar u)(\cdot,x)\|_{L^1(I)}$ is in $L^r(\Omega)$ with $r$ as in  Lemma \ref{tm:aux_linf}. Then it holds $\Bar{w}\in L^{\infty}(\Omega)$ and $\bar u\in L^{\infty}(I\times \Omega)$.
\end{theorem}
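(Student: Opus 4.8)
The plan is to follow the proof of Theorem~\ref{tm:linf_reg} essentially line by line, replacing the optimality condition \eqref{eq:oc_aux} of the auxiliary problem by the stationarity system \eqref{eq:oc_lim_algnonpen} from Theorem~\ref{tm:tm_limitcond_algo_nonpen}. First I would collect the structural information available for the weak limit point $(\bar u,\bar w)$. Every iterate of Algorithm~\ref{alg:alg_nonpen} satisfies $(u_k,w_k)\in K$, and since $K$ is convex and closed, hence weakly closed, the weak limit $(\bar u,\bar w)$ lies in $K$; in particular $|\bar u|\le\bar w$ and $\bar w\ge0$ a.e.\ on $\Omega$. Theorem~\ref{tm:tm_limitcond_algo_nonpen} moreover supplies $\bar\lambda\in W^*$ and $\bar\mu^1,\bar\mu^2\in U$ satisfying \eqref{eq:oc_lim_algnonpen}, with $\bar\mu^1\in\partial I_C(\bar w-\bar u)$ and $\bar\mu^2\in\partial I_C(\bar w+\bar u)$ for $C$ as in \eqref{eq_def_C}. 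Along the subsequence under consideration, $\bar\lambda$ can be taken to be the weak limit in $W^*$ of the functionals $\langle\lambda_{k_n},v\rangle_W=2\int_\Omega\psi'_{\epsilon_{k_n}}(w_{k_n}^2)\,w_{k_n+1}\,v\dd x$.

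For $n\in\N$ I would set $\bar w_n:=\min(\bar w,n)=\max(-n,\min(\bar w,n))\in W$ (using $\bar w\ge0$ and Lemma~\ref{tm:propFrac}(ii)) and $\bar u_n:=\max(-n,\min(\bar u,n))\in U$, and then test \eqref{eq:oc_lim_algnonpen} with $(v,z)=(\bar u-\bar u_n,\ \bar w-\bar w_n)$. The aim is to show that, in the resulting identity, every summand is nonnegative except $f'(\bar u)(\bar u-\bar u_n)$ and $\beta(\bar w,\bar w-\bar w_n)_W$. The term $\alpha(\bar u,\bar u-\bar u_n)_U$ is nonnegative because $\bar u(t,x)\bigl(\bar u(t,x)-\bar u_n(t,x)\bigr)\ge0$ pointwise. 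For the constraint multipliers one first checks the pointwise inequality $|\bar u_n|\le\bar w_n$ by a short case distinction based on $|\bar u|\le\bar w$; hence $\bar w_n\pm\bar u_n\in C$, and feeding these into the subdifferential inequalities for $\bar\mu^1$ and $\bar\mu^2$ (together with the identities $(\bar w-\bar w_n)-(\bar u-\bar u_n)=(\bar w-\bar u)-(\bar w_n-\bar u_n)$ and the analogous one with $+$) yields $(\bar\mu^1,z-v)_U\ge0$ and $(\bar\mu^2,z+v)_U\ge0$. For the term $\gamma\langle\bar\lambda,\bar w-\bar w_n\rangle_W$ I would exploit that $\bar w-\bar w_n=(\bar w-n)_+\ge0$, that $\psi'_{\epsilon_{k_n}}\ge0$ by Lemma~\ref{tm:psieps}(iv), and that $w_{k_n+1}\ge0$ since $(u_{k_n+1},w_{k_n+1})\in K$; thus $\langle\lambda_{k_n},\bar w-\bar w_n\rangle_W\ge0$ for every $n$, and passing to the weak limit gives $\langle\bar\lambda,\bar w-\bar w_n\rangle_W\ge0$.

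Collecting these signs reduces the tested identity to $\beta(\bar w,\bar w-\bar w_n)_W\le f'(\bar u)(\bar u_n-\bar u)$. From here the argument is verbatim that of Theorem~\ref{tm:linf_reg}: feasibility $|\bar u|\le\bar w$ gives $|\bar u(t,x)-\bar u_n(t,x)|\le\bar w(x)-\bar w_n(x)$ for a.a.\ $(t,x)\in I\times\Omega$, so Hölder's inequality in time yields $f'(\bar u)(\bar u_n-\bar u)\le\int_\Omega\|f'(\bar u)(\cdot,x)\|_{L^1(I)}\bigl(\bar w(x)-\bar w_n(x)\bigr)\dd x$. Therefore $\bar w$ satisfies the hypothesis of Lemma~\ref{tm:aux_linf} with $g(x):=\beta^{-1}\|f'(\bar u)(\cdot,x)\|_{L^1(I)}$, which belongs to $L^r(\Omega)$ by assumption, and we conclude $\bar w\in L^\infty(\Omega)$; then $\bar u\in L^\infty(I\times\Omega)$ follows from $|\bar u|\le\bar w$. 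I expect the only point needing genuine care --- beyond transcribing Theorem~\ref{tm:linf_reg} --- to be the sign of the $\bar\lambda$-term: since $\bar\lambda$ is obtained merely as a weak limit in $W^*$, its nonnegativity against nonnegative test functions must be recovered from the explicit form of $\lambda_{k_n}$ and the nonnegativity of the iterates $w_{k}$, rather than from any monotonicity property of $\bar\lambda$ itself.
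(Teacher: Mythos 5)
Your proposal is correct and follows essentially the same route as the paper, which likewise reduces the claim to the inequality $\beta(\bar w,\bar w-\bar w_n)_W\le f'(\bar u)(\bar u_n-\bar u)$ and invokes Lemma~\ref{tm:aux_linf}; the only (harmless) difference is that the paper tests the finite-$k$ subproblem condition \eqref{eq:oc_nonpenalg_subdiff_l2} and drops the nonnegative $\psi'$-term before passing to the limit, whereas you test the limiting system \eqref{eq:oc_lim_algnonpen} and recover the sign of the $\bar\lambda$-term from the approximating multipliers. You correctly identified that latter step as the one point requiring care, and your treatment of it is sound.
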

\begin{proof}
This can be shown similarly as in Theorem \ref{tm:linf_reg} by testing the optimality condition \eqref{eq:oc_nonpenalg_subdiff_l2} of the subproblem with $(\bar u-\bar u_n, \bar w-\bar w_n)$ defined as in the proof of Theorem  \ref{tm:linf_reg}.
\end{proof}

\section{Numerical Results}\label{sec:numRes}
In this section, we consider some numerical examples to test Algorithm \ref{alg:alg_nonpen}.
For the implementation of the algorithm, we use $P_1$-finite elements
to construct the discrete spaces $U_h$ and $W_h$ on $I\times\Omega$ and $\Omega$.
The domain $\Omega$ is triangulated with some shape regular mesh with $N$ nodes and mesh-size $h$, the interval $I$ is divided by $M$ equidistant nodes into intervals of equal length.
Then $W_h$ is defined as the $P_1$-space on the triangulation of $\Omega$, whereas $U_h$ is the tensor-product space on the subdivisions of $I$ and $\Omega$.
It follows $\dim(U_h) = M \cdot N$ and $\dim(W_n) = N$.
Let $(\phi_i)$ and $(\phi_{i,j})$ denote the basis functions of $W_h$ and $U_h$.
Then it holds $(u,w)=(\sum_{i,j}u_{i,j}\varphi_{i,j}, \sum_i w_i\varphi_i)\in K \cap (U_h\times W_h)$ if and only if $|u_{i,j}|\leq w_i$ for all $i,j$.

For solving subproblem \eqref{eq:minprob_algo_nonpen} in each step of the iterative scheme, we employ a semismooth Newton method as presented in \cite{21ss_kanzow}. To apply this method, the discretized optimality condition of \eqref{eq:minprob_algo_nonpen} is rewritten as a system of equations in the variables $u\in \R^{M \cdot N}, w\in \R^N$ and $\mu^1,\mu^2 \in \R^{M \cdot N}$ using the Fischer-Burmeister function.  Here, 
$\mu^1,\mu^2$ are the Lagrange multipliers for the constraint $(u,w)\in K$. The system is then solved
with a globalized semismooth Newton method, where we switch to the gradient step of a merit function whenever the Newton iterate is not well defined or does not satisfy a certain descent condition.\\
In every step of the semismooth Newton method we solve a linear system to obtain the search direction for the new iterates $u_{k+1}, w_{k+1}, \mu^1_{k+1}$ and $ \mu^2_{k+1}$. The matrix of the linear system is a block matrix consisting mostly of sparse matrices.
Only the block of size $N\times N$ that corresponds to the discretization of the $W$ inner product
is not sparse but dense.
If we had not introduced the auxiliary function $w$ and had directly worked with \eqref{eq_minprob_u_in_Hs},
the resulting system would have a dense block of size $(M \cdot N) \times (M \cdot N)$.

In order to find a parameter $L_k$ such that the descent condition \eqref{eq:alg_nonpen_fcond} is satisfied, we employ a procedure similar to Armijo line-search in nonlinear optimization.
We solve \eqref{eq:minprob_algo_nonpen} for $L_k = L b^l$, where $l= 0,1,\dots$ is increased until \eqref{eq:alg_nonpen_fcond} is satisfied.
In the following examples we choose the parameters $L=1$ and $b=2$ in this loop.

Next, we consider two numerical examples where $f$ is chosen to be a tracking type functional of the form $f(u)=\frac{a}{2}\|u-u_d\|_U^2$ for some given function $u_d\in U$ and $a>0$. In these examples we choose the space $W=\Tilde{H}^s(\Omega)$.
For computing the $\tilde H^s(\Omega)$-stiffness matrix, we follow \cite{control1dim} in the one-dimensional and \cite{fe2d} in the two-dimensional case.

\subsection{Example in 1d}\label{sec:1dExample}
First, we consider a one-dimensional example with $\Omega := (-1,1)$, $I=(0, \frac12)$ and choose \begin{equation*}
    u_d := t \sin(1.5 (x-1) ) \hspace{5mm} \text{ and } \hspace{5mm} a=25.
\end{equation*} Furthermore, let $\alpha=2, \beta =0.2 , p=0.01$ and $s=0.1$ We compare the solutions for different values of $\gamma$. One can observe in Table \ref{tab:support} that the support of the solution $w$ decreases with increasing $\gamma$.
In Table \ref{tab:convergence}, the errors $\|u_k-u_\reference\|_U$, $\|w_k-w_\reference\|_W$ and $|\Phi_0(u_k,w_k)-\Phi_0(u_\reference,w_\reference)|$ are compared after different numbers of iterations. Here, the reference solution $(u_\reference,w_\reference)$ is the final iterate after 59 iterations when the stopping criterion is satisfied.
Table \ref{tab:meshSize} shows that the distance of solutions computed with different mesh sizes to a reference solution decreases for decreasing mesh size, where the
reference solution is computed on a finer grid. Here, we work with the same number of steps in time and space, so that $M=N$. Figure \ref{fig:1dExample} depicts the computed solutions $u$ and $w$ for this example.

\begin{figure}[h]
	\begin{subfigure}{0.3\textwidth}
		\includegraphics[scale=0.31]{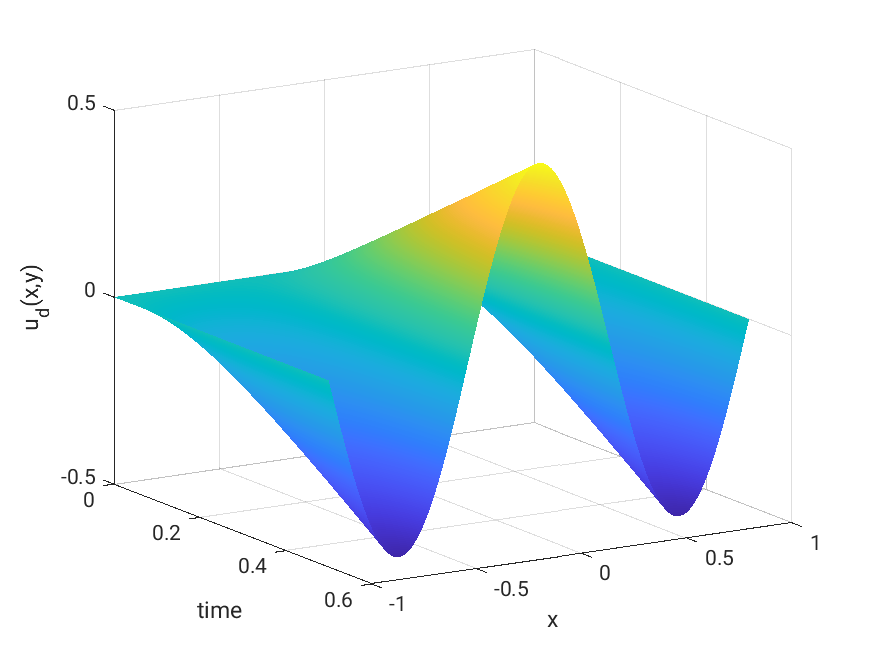}
		\caption{$u_d$.}
	\end{subfigure}
	\begin{subfigure}{0.3\textwidth}
		\includegraphics[scale=0.31]{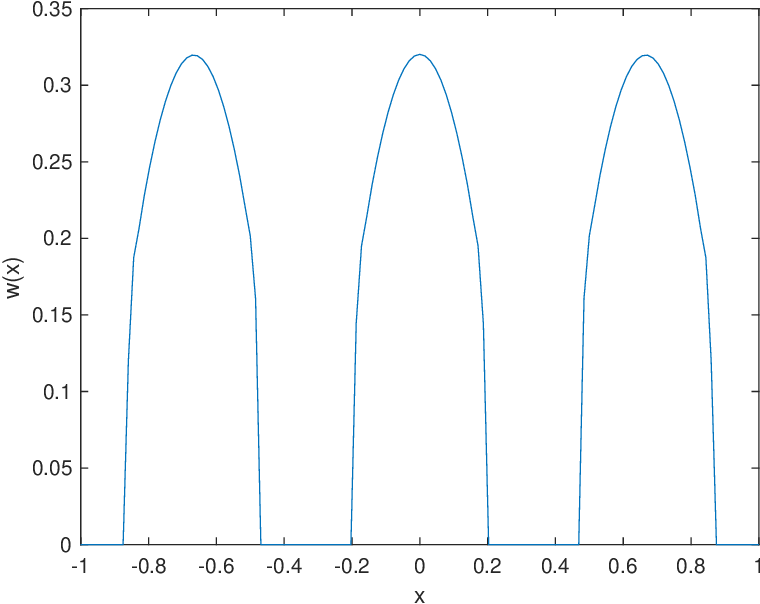}
		\caption{Solution $w_\reference$.}
	\end{subfigure}
	\begin{subfigure}{0.3\textwidth}
		\includegraphics[scale=0.31]{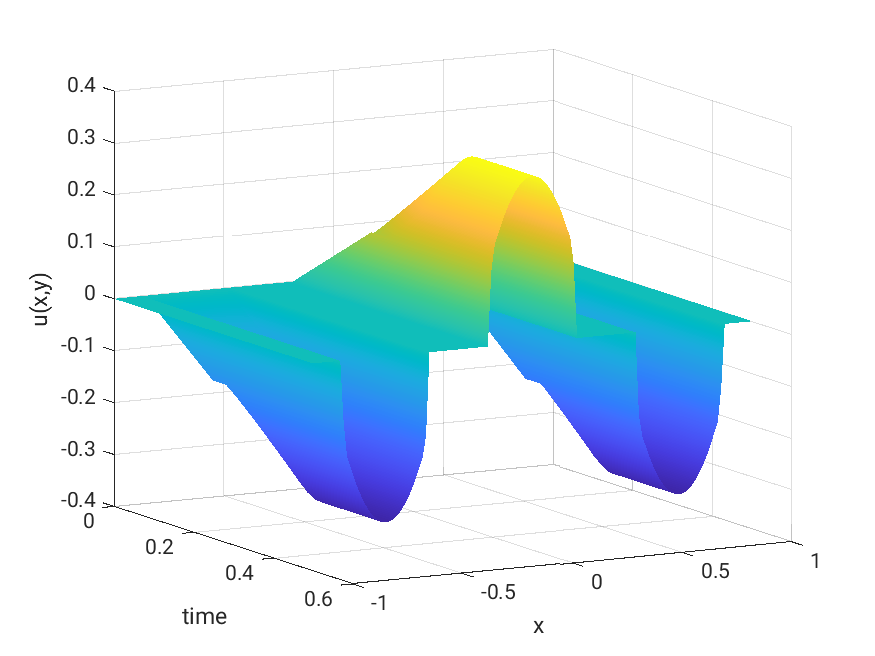}
		\caption{Solution $u_\reference$.}
	\end{subfigure}
    \caption{Section \ref{sec:1dExample}: Plot of $u_d$ and the computed solutions $w_\reference$ and $u_\reference$ for $\gamma = 5$. }\label{fig:1dExample}
\end{figure}

\begin{table}[htb]
\sisetup{table-alignment-mode = format}
\begin{center}
\begin{tabular}{S[table-format = 2]S[table-format = 2.1] }
\toprule
$\gamma$&{\textrm{\% of }$\{u(t,x)=0\}$ in $I\times\Omega$}\\
\midrule
0& 0\% \\
1 & 18.5\% \\
5 & 43\% \\
10 & 68\% \\
\bottomrule
\end{tabular}
\caption{Section \ref{sec:1dExample}: Percentage of the domain where $u$ vanishes for different values of $\gamma$. }\label{tab:support}
\end{center}
\end{table}

\begin{table}[htb]
\sisetup{table-alignment-mode = format}
\begin{center}
\begin{tabular}{S[table-format = 2]S[table-format = 1.4e2]
S[table-format=1.4e2]S[table-format=1.5]S[table-format=1.4] }
\toprule
$k$& {$\|u_k-u_\reference\|_U$} & {$\|w_k-w_\reference\|_W$} & {$|\Phi_0(u_k,w_k)-\Phi_0(u_\reference,w_\reference)|$} & {$\Phi(u_k,w_k)$}\\
\midrule
10 & 5.5871e-3 & 1.6573e-2 & 0.15402 & 1.9887\\
20 & 4.1156e-3 & 1.2061e-2 & 0.11178 & 1.9465\\
30 & 3.657e-3 & 1.0559e-2 & 0.08259 & 1.9173\\
40 & 2.5632e-3 & 7.198e-3 & 0.05709 & 1.8918\\
50 & 5.126e-06 & 9.6418e-06 & 0.0233 & 1.858\\
55 & 1.3119e-07 & 1.9858e-08 & 0.00835 & 1.8431\\
59 &  {-}  & {-}  &  {-}  & 1.8347 \\
\bottomrule
\end{tabular}
\caption{Section \ref{sec:1dExample}: Errors for different numbers of iterations  for $\gamma=1$. }\label{tab:convergence}
\end{center}
\end{table}

\begin{table}[htb]
\sisetup{table-alignment-mode = format,
table-number-alignment = left}
\begin{center}
\begin{tabular}{S[table-format = 3]S[table-format = 1.2e2]
S[table-format=1.2e2] }
\toprule
$N$& {$\|u_h-u_{\reference}\|_U$}  & {$\|w_N-w_{\reference}\|_{L^2(\Omega)}$}\\
\midrule
33 & 4.2395e-04 & 2.2583e-04 \\
65 & 8.6495e-05 & 3.2757e-05 \\
125 & 4.5303e-05 & 1.2886e-05 \\
\bottomrule
\end{tabular}
\caption{Section \ref{sec:1dExample}: Errors of the final iterates $(u_N, w_N)$ for a mesh with $N$ nodes in space compared to the reference solution $(u_\reference, w_\reference)$ for $N=257$. }\label{tab:meshSize}
\end{center}
\end{table}

\subsection{Example in 2d}\label{sec:2dExample}
We now consider the two-dimensional domain $\Omega = (-1,1)\times(-1,1)$, the time interval $I=(0,0.3)$ and choose $\alpha=2, \beta = 0.2, \gamma = 1$ and $s=0.1$ for
\[
u_d(t,x,y) = 5t\cdot \max(x^2,y^2).
\]
The number of nodes is $N=1135$ on $\Omega$ and $M=25$ on the time interval $I$.
We computed solutions for different values of $p$ from the interval $(0,1]$.
We report the size of their support in Table \ref{tab:supportP}.
As can be seen, the value $p =0.3$ leads to solutions with smallest support, whereas $p=1$ clearly results in less sparse solutions.
This behaviour of the sparsity is in line with the observations made in \cite{87huberApprox}. 
The sparsity inducing effect of the $L^p$-pseudo-norm can also be observed in Figure \ref{fig:2dExample}.

\begin{table}[htb]
\sisetup{table-alignment-mode = format}
\begin{center}
\begin{tabular}{S[table-format = 1.2]S[table-format = 2] }
\toprule
$p$&{\textrm{\% of }$\{u(x)=0\}$ in $\Omega$}\\
\toprule
1 & 13\% \\
0.9 & 20\% \\
0.7 & 31\% \\
0.3 & 37\% \\
0.1 & 32\% \\
0.05 & 25\% \\
\bottomrule
\end{tabular}
\caption{Section \ref{sec:2dExample}: Percentage of the parts where $u$ vanishes on the domain $\Omega$ for different values of $p$. }\label{tab:supportP}\end{center}
\end{table}

\begin{figure}[h]
	\begin{subfigure}{0.33\textwidth}
		\includegraphics[scale=0.33]{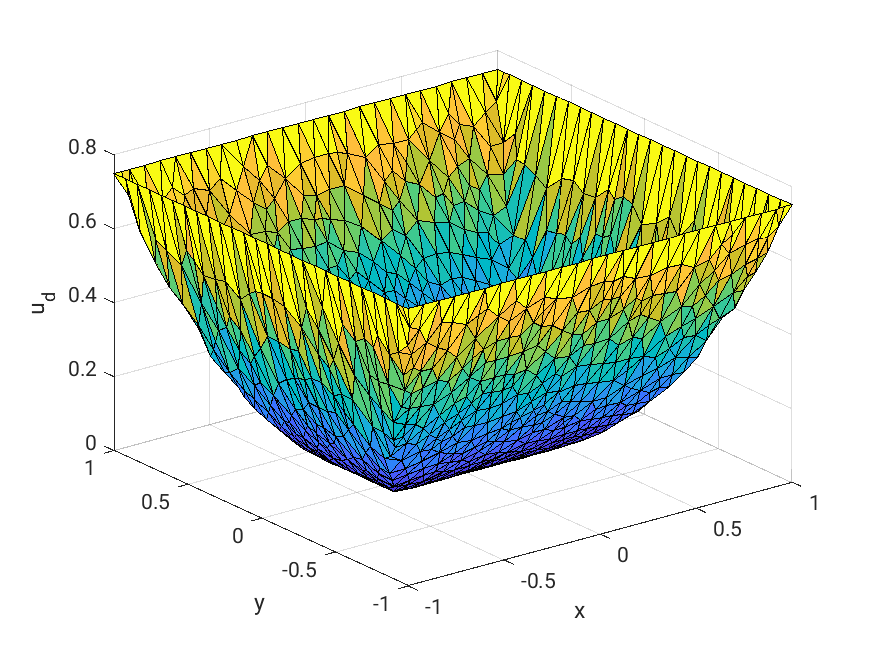}
		\setcounter{subfigure}{0}
		\caption{$u_d$ at time $T/2$.}
	\end{subfigure}
	\begin{subfigure}{0.33\textwidth}
		\includegraphics[scale=0.33]{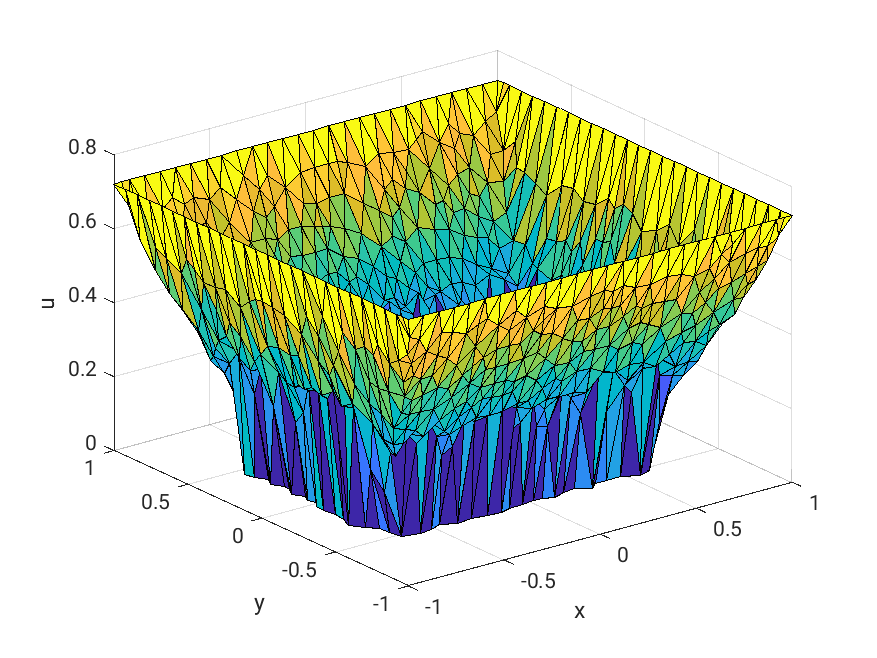}
		\setcounter{subfigure}{2}
		\caption{$u$ at time $T/2$.}
	\end{subfigure}
	\begin{subfigure}{0.33\textwidth} \vspace{-5cm}
		\includegraphics[scale=0.33]{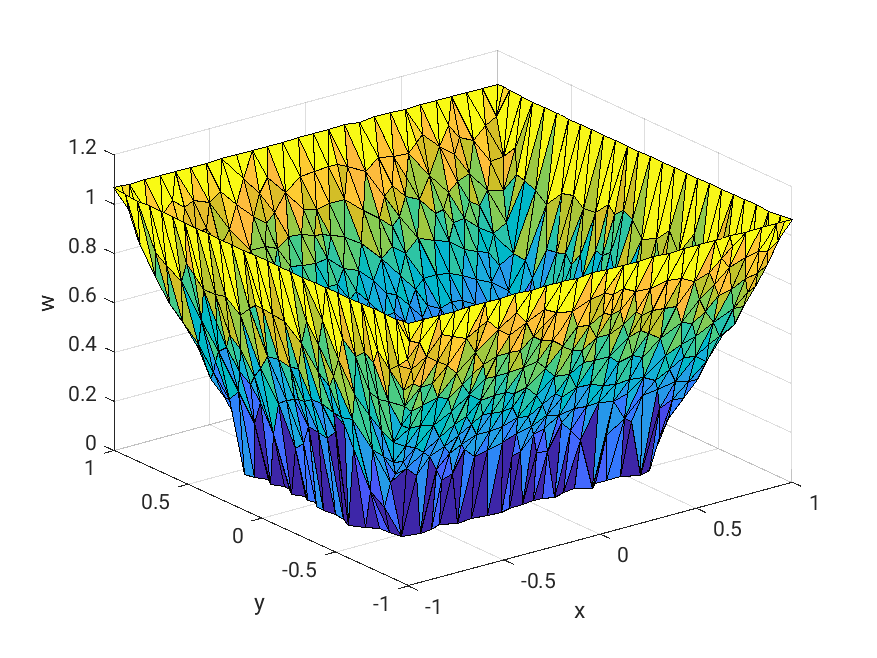}
		\setcounter{subfigure}{4}
		\caption{ $w$.}
	\end{subfigure} \\
	\begin{subfigure}{0.33\textwidth}
		\includegraphics[scale=0.33]{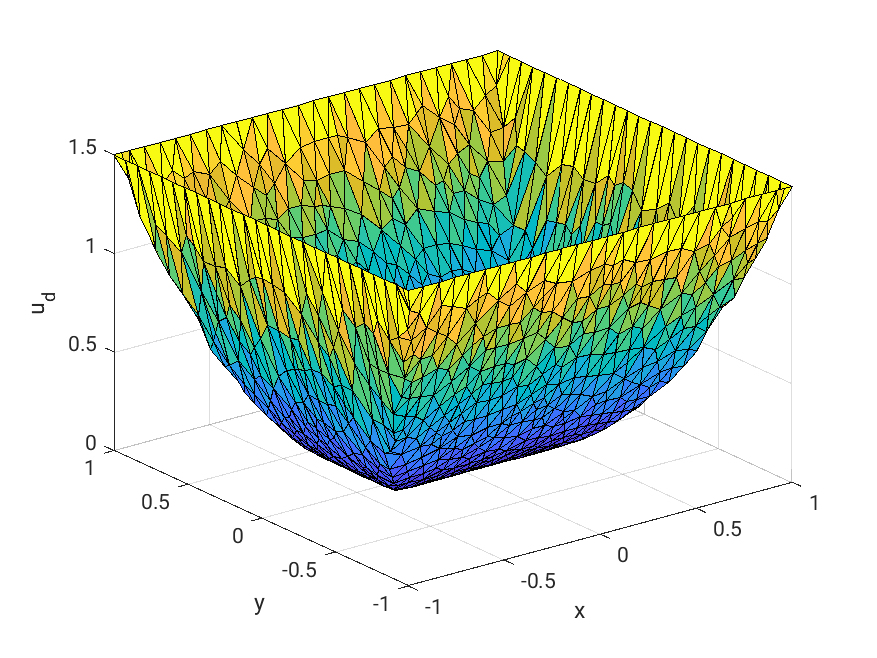}
		\setcounter{subfigure}{1}
		\caption{$u_d$ at time $T$.}
	\end{subfigure}
	\begin{subfigure}{0.33\textwidth}
		\includegraphics[scale=0.33]{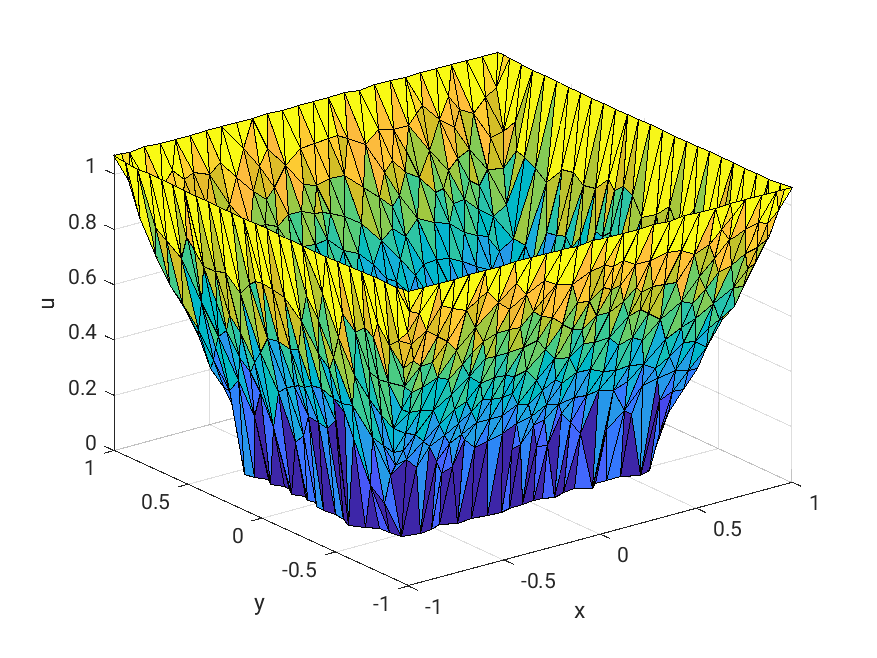}
		\setcounter{subfigure}{3}
		\caption{$u$ at time $T$.}
	\end{subfigure}
	\caption{Section \ref{sec:2dExample}: Plots of $u_d$ and numerical solutions $u$ and $w$ for $p=0.3$.}\label{fig:2dExample}
\end{figure}

\bibliographystyle{tfs}
\bibliography{lit.bib}

\end{document}